\newcolumntype{C}[1]{>{\centering\arraybackslash}p{#1}}
\newcolumntype{L}[1]{>{\arraybackslash}p{#1}}
\def\fCenter{{\mbox{$\ \vdash\ $}}}
\newcommand{\scs}{\scriptsize}
\newcommand{\fns}{\footnotesize}
\newcommand{\marginnote}[1]{\marginpar{\raggedright\tiny{#1}}}
\def\mc{\multicolumn}
\newcommand{\mb}{\mathbb}
\newcommand{\mcl}{\mathcal}
\newcommand{\xneg}{\ensuremath{\neg}\xspace}
\newcommand{\xtop}{\ensuremath{\top}\xspace}
\newcommand{\xbot}{\ensuremath{\bot}\xspace}
\newcommand{\xand}{\ensuremath{\wedge}\xspace}
\newcommand{\xor}{\ensuremath{\vee}\xspace}
\newcommand{\xrarr}{\ensuremath{\rightarrow}\xspace}
\newcommand{\XNEG}{\ensuremath{\:\tilde{\neg}}\xspace}
\newcommand{\XTOP}{\hat{\top}}
\newcommand{\XBOT}{\ensuremath{\check{\bot}}\xspace}
\newcommand{\XAND}{\ensuremath{\:\hat{\wedge}\:}\xspace}
\newcommand{\XOR}{\ensuremath{\:\check{\vee}\:}\xspace}
\newcommand{\dneg}{\ensuremath{{\sim}}\xspace}
\newcommand{\rdneg}{\ensuremath{\neg}\xspace}
\newcommand{\dtop}{\ensuremath{1}\xspace}
\newcommand{\dbot}{\ensuremath{0}\xspace}
\newcommand{\dand}{\ensuremath{\cap}\xspace}
\newcommand{\dor}{\ensuremath{\cup}\xspace}
\newcommand{\DNEG}{\ensuremath{\:\tilde{\sim}}\xspace}
\newcommand{\DTOP}{\ensuremath{\hat{1}}\xspace}
\newcommand{\DBOT}{\ensuremath{\check{0}}\xspace}
\newcommand{\DAND}{\ensuremath{\:\hat{\cap}\:}\xspace}
\newcommand{\DOR}{\ensuremath{\:\check{\cup}\:}\xspace}
\newcommand{\XDIANU}{\ensuremath{\langle\hat{\nu}\rangle}\xspace}
\newcommand{\xdianu}{\ensuremath{\langle\nu\rangle}\xspace}
\newcommand{\DBOXUN}{\ensuremath{[\check{\rotatebox[origin=c]{180}{$\nu$}}]}\xspace}
\newcommand{\dboxun}{\ensuremath{[\rotatebox[origin=c]{180}{$\nu$}]}\xspace}
\newcommand{\DDIANI}{\ensuremath{\langle\hat{\ni}\rangle}\xspace}
\newcommand{\ddiani}{\ensuremath{\langle\ni\rangle}\xspace}
\newcommand{\XBOXNIN}{\ensuremath{[\check{\not\in}]}\xspace}
\newcommand{\xboxnin}{\ensuremath{[\not\in]}\xspace}
\newcommand{\XDIAIN}{\ensuremath{\langle\hat{\in}\rangle}\xspace}
\newcommand{\xdiain}{\ensuremath{\langle\in\rangle}\xspace}
\newcommand{\DBOXNI}{\ensuremath{[\check\ni]}\xspace}
\newcommand{\dboxni}{\ensuremath{[\ni]}\xspace}
\newcommand{\DDIANNI}{\ensuremath{\langle\hat{\not\ni}\rangle}\xspace}
\newcommand{\ddianni}{\ensuremath{\langle\not\ni\rangle}\xspace}
\newcommand{\DDIAUNC}{\ensuremath{\langle\hat{\rotatebox[origin=c]{180}{$\nu$}}^c\rangle}\xspace}
\newcommand{\ddiaunc}{\ensuremath{\langle\rotatebox[origin=c]{180}{$\nu$}^c\rangle}\xspace}
\newcommand{\XBOXNUC}{\ensuremath{[\check{\nu^c}]}\xspace}
\newcommand{\xboxnuc}{\ensuremath{[\nu^c]}\xspace}
\newcommand{\MTRA}{\ensuremath{\check{\,\vartriangleright\,}}\xspace}
\newcommand{\mtra}{\ensuremath{\vartriangleright}\xspace}
\newcommand{\MTBRA}{\ensuremath{\check{\,\blacktriangleright\,}}\xspace}
\newcommand{\mtbra}{\ensuremath{\blacktriangleright}\xspace}
\newcommand{\MTAND}{\ensuremath{\,\hat{\blacktriangle}\,}\xspace}
\newcommand{\mtAND}{\ensuremath{\blacktriangle}\xspace}
\newcommand{\DRHDNNI}{\ensuremath{[\check{\not\ni}\rangle}\xspace}
\newcommand{\drhdnni}{\ensuremath{[\not\ni\rangle}\xspace}
\newcommand{\XRHDNIN}{\ensuremath{[\check{\not\in}\rangle}\xspace}
\newcommand{\xrhdnin}{\ensuremath{[\not\in\rangle}\xspace}
\newcommand{\abla}{\nabla}
\begin{document}
\title{Non normal logics: semantic analysis and proof theory
%\thanks{Supported by organizatio x}
}
%
%\titlerunning{Abbreviated paper title}
% If the paper title is too long for the running head, you can set
% an abbreviated paper title here
%\orcidID{}
\author{Jinsheng Chen\inst{1} 
\and
Giuseppe Greco\inst{2} 
\and
Alessandra Palmigiano \inst{1,3}\thanks{This research is supported by the NWO Vidi grant 016.138.314, the NWO Aspasia grant 015.008.054, and a Delft Technology Fellowship awarded to the fourth author}
\and
Apostolos Tzimoulis\inst{4}
}
\authorrunning{Chen,  Greco, Palmigiano, Tzimoulis}
% First names are abbreviated in the running head.
% If there are more than two authors, 'et al.' is used.
%
\institute{
Delft University of Technology, the Netherlands
\and 
University of Utrecht, the Netherlands
\and 
University of Johannesburg, South Africa
\and 
Vrije Universiteit Amsterdam, the Netherlands
}
\maketitle              % typeset the header of the contribution
\begin{abstract}
%The abstract should briefly summarize the contents of the paper in 150--250 words.
We introduce proper display calculi for  basic monotonic modal logic, the conditional logic CK and a number of their axiomatic extensions. These calculi are sound, complete, conservative and enjoy cut elimination and subformula property. Our proposal applies the multi-type methodology in the design of display calculi, starting from a semantic analysis based on the translation from monotonic modal logic to normal bi-modal logic.
\keywords{ Monotonic modal logic \and Conditional logic \and  Proper display calculi.}
\end{abstract}
\section{Introduction}
By {\em non normal logics} we understand in this paper those propositional logics algebraically captured by varieties of {\em Boolean algebra expansions}, i.e.~algebras $\mathbb{A} = (\mathbb{B}, \mathcal{F}^\mathbb{A}, \mathcal{G}^\mathbb{A})$ such that $\mathbb{B}$ is a Boolean algebra, and $\mathcal{F}^\mathbb{A}$ and $\mathcal{G}^\mathbb{A}$ are finite, possibly empty families of operations on $\mathbb{B}$ in which the requirement is dropped that each operation in $\mathcal{F}^\mathbb{A}$ be finitely join-preserving or meet-reversing  in each coordinate and  each operation in $\mathcal{G}^\mathbb{A}$ be finitely meet-preserving or join-reversing  in each coordinate. Very well known examples of non normal logics are {\em monotonic modal logic} \cite{chellas1980modal} and {\em conditional logic} \cite{nute2012topics,chellas1975basic}, which have been intensely investigated, since they capture key aspects of agents' reasoning, such as the epistemic \cite{van2011dynamic}, strategic \cite{pauly2003game,pauly2002modal}, and hypothetical \cite{gabbay2000conditional,lewis2013counterfactuals}.

Non normal logics have been extensively investigated both with model-theoretic tools \cite{hansen2003monotonic} and with proof-theoretic tools \cite{negri2017proof,Olivetti2007ASC}. Specific to proof theory, the main challenge is to endow non normal logics with analytic calculi which can be modularly expanded with additional rules so as to uniformly capture wide classes of axiomatic extensions of the basic frameworks, while preserving key properties such as cut elimination. In this paper, we propose a method to achieve this goal. We will illustrate this method for the two specific signatures of monotonic modal logic and conditional logic. 

Our starting point is the very well known observation that, under the interpretation of the modal connective of monotonic modal logic in neighbourhood frames $\mathbb{F} = (W, \nu)$, the monotonic `box' operation can be understood as the composition of a {\em normal} (i.e.~finitely join-preserving) semantic diamond $\xdianu$ and a {\em normal} (i.e.~finitely meet-preserving) semantic box $\dboxni$. The binary relations $R_\nu$ and $R_\ni$ corresponding to these two normal operators are not defined on one and the same domain, but span over two domains, namely $R_\nu\subseteq W\times \mathcal{P}(W)$ is s.t.~$w R_\nu X$ iff $X\in \nu(w)$ and $R_\ni\subseteq \mathcal{P}(W)\times W$ is s.t.~$X R_\ni w$ iff $w\in X$ (cf.~\cite[Definition 5.7]{hansen2003monotonic}, see also \cite{kracht1999normal,gasquet1996classical}). 
We refine and expand these observations so as to: (a) introduce a semantic environment of two-sorted Kripke frames (cf.~Definition \ref{def:2sorted Kripke frame}) and their heterogeneous algebras  (cf.~Definition \ref{def:heterogeneous algebras}); (b) outline a network of discrete dualities and adjunctions among these semantic structures and the algebras and frames for monotone modal logic and conditional logic (cf.~Propositions \ref{prop:dduality single type},  \ref{prop:dduality multi-type}, \ref{prop:alg characterization of single into multi}, \ref{prop:adjunction-frames}); (c) based on these semantic relationships, introduce  multi-type {\em normal} logics into which the original non normal logics can embed via suitable translations (cf.~Section \ref{sec:embedding}); (d) retrieve well known dual characterization results  for axiomatic extensions of monotone modal logic and conditional logics as instances of general algorithmic correspondence theory for normal  (multi-type)  LE-logics applied to the translated axioms (cf.~Section \ref{sec:ALBA runs}); (e) extract analytic structural rules from the computations of the first order correspondents of the translated axioms, so that, again by general results on  proper display calculi \cite{greco2016unified} applied to multi-type logical frameworks \cite{bilkova2018logic}), the resulting calculi are sound, complete, conservative and enjoy cut elimination and subformula property.

\section{Preliminaries}
\paragraph{Notation.}
\label{ssec:notation}
Throughout the paper, %$\Delta_U$ denotes the identity relation on a set $U$, and we typically drop the subscript when it causes no ambiguity. T
the superscript $(\cdot)^c$ denotes the relative complement of the subset of a given set. When the given set is a singleton $\{x\}$, we will write $x^c$ instead of $\{x\}^c$.  %In particular, f
For any binary relation $R\subseteq S\times T$, %we let $R^c\subseteq S\times T$ be defined by  $(s, t)\in R^c$ iff $(s, t)\notin R$. We will write e.g.~  For any such $R$ 
and any $S'\subseteq S$ and $T'\subseteq T$, we  let $R[S']: = \{t\in T\mid (s, t)\in R \mbox{ for some } s\in S'\}$ and $R^{-1}[T']: = \{s\in S\mid (s, t)\in R \mbox{ for some } t\in T'\}$. As usual, we write $R[s]$ and $R^{-1}[t]$ instead of $R[\{s\}]$ and $R^{-1}[\{t\}]$, respectively. 
For any ternary relation $R\subseteq S\times T\times U$ %we let $R^c\subseteq S\times T$ be defined by  $(s, t)\in R^c$ iff $(s, t)\notin R$. We will write e.g.~  For any such $R$ 
and subsets $S'\subseteq S$, $T'\subseteq T$, and $U'\subseteq U$, we also let
{\small{
\begin{itemize}
\item $R^{(0)}[T', U'] =\{s\in S\mid\  \exists t\exists u(R(s, t, u)\ \&\  t\in T' \ \&\ u\in U')\},$
\item $R^{(1)}[S', U'] =\{t\in T\mid\  \exists s\exists u(R(s, t, u)\ \&\  s\in S' \ \&\ u\in U')\},$
\item $R^{(2)}[S', T'] =\{u\in U\mid\  \exists s\exists t(R(s, t, u)\ \&\  s\in S' \ \&\ t\in T')\}.$
\end{itemize}
}}
Any binary relation $R\subseteq S\times T$ gives rise to the
  {\em modal operators} $\langle R\rangle, [R], [R\rangle, \langle R] :\mathcal{P}(T)\to \mathcal{P}(S)$ s.t.~for any $T'\subseteq T$ % $[R], \langle R\rangle,  [ R\rangle, \langle R]$ arising from $R$ are defined as follows:
  {\small{
  \begin{itemize}
\item $\langle R\rangle T' : = R^{-1}[T'] = \{s\in S\mid \exists t( s R t \ \&\  t\in T')\}$;
\item  $ [R]T': = (R^{-1}[{T'}^c])^c = \{s\in S\mid \forall t( s R t \ \to\  t\in T')\}$; 
\item  $ [R\rangle T': = (R^{-1}[T'])^c = \{s\in S\mid \forall t( s R t \ \to\  t\notin T')\}$
\item $\langle R] T' : = R^{-1}[{T'}^c] = \{s\in S\mid \exists t( s R t \ \&\  t\notin T')\}$.
\end{itemize}
}}
\noindent By construction, these modal operators are normal.  In particular, $\langle R\rangle$ is completely join-preserving, $[R]$ is completely meet-preserving, $[R\rangle$ is completely join-reversing and $\langle R]$ is completely meet-reversing. Hence, their adjoint maps exist and coincide with $[R^{-1}]\langle R^{-1}\rangle,  [R^{-1}\rangle, \langle R^{-1}]: \mathcal{P}(S)\to \mathcal{P}(T)$, respectively.
Any ternary relation $R\subseteq S\times T\times U$ gives rise to the
  {\em  modal operators} $\mtra_R: \mathcal{P}(T)\times \mathcal{P}(U)\to \mathcal{P}(S)$ and  $\mtAND_R: \mathcal{P}(T)\times \mathcal{P}(S)\to \mathcal{P}(U)$ and $\mtbra_R: \mathcal{P}(S)\times \mathcal{P}(U)\to \mathcal{P}(T)$  s.t.~for any $S'\subseteq S$, $T'\subseteq T$, and $U'\subseteq U$, % $[R], \langle R\rangle,  [ R\rangle, \langle R]$ arising from $R$ are defined as follows:
  {\small{
  \begin{itemize}
\item $T' \mtra_R U': =  (R^{(0)}[T', {U'}^c])^c =\{s\in S\mid\  \forall t\forall u(R(s,t,u)\ \&\  t\in T' \Rightarrow u\in U')\}$;
\item $T' \mtAND_R S': =  R^{(2)}[T', S'] =\{u\in U\mid\  \exists t\exists s(R(s,t,u)\ \&\  t\in T' \ \&\  s\in S')\}$;
\item $S' \mtbra_R U': =  (R^{(1)}[S', {U'}^c])^c =\{t\in T\mid\  \forall s\forall u(R(s,t,u)\ \&\  s\in S' \Rightarrow u\in U')\}$.
\end{itemize}
}}
The stipulations above guarantee that these modal operators are normal.  In particular,   $\mtra_R$ and $\mtbra_R$ are completely join-reversing in their first coordinate and completely meet-preserving in their second coordinate, and $\mtAND_R$ is completely join-preserving in both coordinates. These three maps are residual to each other, i.e.~$S'\subseteq T' \mtra_R U'\, $ iff $\, T' \mtAND_R S'\subseteq U'\, $ iff $\, T'\subseteq S' \mtbra_R U'$ for any $S'\subseteq S$, $T'\subseteq T$, and $U'\subseteq U$.

\subsection{Basic monotonic modal logic and conditional logic}
\label{ssec:prelim}
%In this section, we introduce some preliminaries about monotonic modal logic. Notations are from 

\paragraph{Syntax.} For a countable set of propositional variables $\mathsf{Prop}$, the languages $\mathcal{L}_{\abla}$ and $\mathcal{L}_{>}$ of monotonic modal logic and conditional logic over $\mathsf{Prop}$ are defined as follows:
\[\mathcal{L}_{\abla}\ni \phi ::= p\mid \neg \phi \mid  \phi \land \phi \mid \abla \phi\quad\quad\quad\quad\mathcal{L}_{>}\ni \phi ::= p\mid \neg \phi \mid  \phi \land \phi \mid  \phi> \phi.\]
The connectives $\top, \land,\lor, \to$ and $\leftrightarrow$ are defined as usual. 
The {\em basic monotone modal logic} $\mathbf{L}_{\abla}$ (resp.~{\em basic conditional logic} $\mathbf{L}_{>}$) is a set   of $\mathcal{L}_{\abla}$-formulas  (resp.~$\mathcal{L}_{>}$-formulas) containing the  axioms of classical propositional logic and closed under modus ponens, uniform substitution and M (resp.~RCEA and RCK$_n$ for all $n\geq 0$):
{\footnotesize{
\begin{center}
\AXC{$\varphi \rightarrow \psi$}
\LeftLabel{\tiny{M}}
\UIC{$\abla \varphi \to \abla \psi$}
\DP
\ \ \ 
\AXC{$\varphi \leftrightarrow \psi$}
\LeftLabel{\tiny{RCEA}}
\UIC{$(\varphi > \chi) \leftrightarrow (\psi > \chi)$}
\DP
\ \ \ 
\AXC{$\varphi_1 \wedge{\! \ldots \!}\wedge \varphi_n \rightarrow \psi$}
\LeftLabel{\tiny{RCK$_n$}}
\UIC{$(\chi > \varphi_1) \wedge{\! \ldots \!}\wedge (\chi > \varphi_n) \rightarrow (\chi > \psi)$}
\DP
\end{center}
}}

\paragraph{Algebraic semantics.} A {\em monotone Boolean algebra expansion}, abbreviated as {\em m-algebra} (resp.~{\em conditional algebra}, abbreviated as {\em c-algebra}) is a pair $\mathbb{A} = (\mathbb{B}, \abla^{\mathbb{A}})$ (resp.~$\mathbb{A} = (\mathbb{B}, >^{\mathbb{A}})$)  s.t.~$\mathbb{B}$ is a Boolean algebra and  $\abla^{\mathbb{A}}$ is a unary monotone operation on $\mathbb{B}$ (resp.~$>^{\mathbb{A}}$ is a binary operation on $\mathbb{B}$ which is finitely meet-preserving in its second coordinate). Interpretation of formulas in algebras under assignments $h:\mathcal{L}_{\abla}\to \mathbb{A}$ (resp.~$h:\mathcal{L}_{>}\to \mathbb{A}$)  and validity of formulas in algebras (in symbols: $\mathbb{A}\models\phi$) are defined as usual. By a routine Lindenbaum-Tarski construction one can show that $\mathbf{L}_{\abla}$ (resp.~$\mathbf{L}_{>}$) is sound and complete w.r.t.~the class of m-algebras (resp.~c-algebras). 

\paragraph{Canonical extensions.} The {\em canonical extension} of an m-algebra (resp.~c-algebra)  $\mathbb{A}$ is  $\mathbb{A}^\delta: = (\mathbb{B}^\delta, \abla^{\mathbb{A}^\delta})$ (resp.~$\mathbb{A}^\delta: = (\mathbb{B}^\delta, >^{\mathbb{A}^\delta})$), where $\mathbb{B}^\delta$ is the canonical extension of $\mathbb{B}$ \cite{jonsson1951boolean}, and $\abla^{\mathbb{A}^\delta}$ (resp.~$>^{\mathbb{A}^\delta}$) is the $\pi$-extension of $\abla^{\mathbb{A}}$ (resp.~$>^{\mathbb{A}}$). %\marginnote{we need to add the def of can ext and $\pi$-extension. notice that we need the more complicated *interval* one for the first coord of $>$}
By general results of $\pi$-extensions of maps (cf.~\cite{gehrke2004bounded}), the canonical extension of an m-algebra (resp.~c-algebra) is a {\em perfect} m-algebra (resp.~c-algebra), i.e.~the Boolean algebra $\mathbb{B}$ on which it is based can be identified with a powerset algebra $\mathcal{P}(W)$ up to isomorphism.

\paragraph{Frames and models.}
A {\em neighbourhood frame}, abbreviated as {\em n-frame}  (resp.~{\em conditional frame}, abbreviated as {\em c-frame}) is a pair $\mathbb{F}=(W,\nu)$ (resp.~$\mathbb{F}=(W,f)$) s.t.~$W$ is a non-empty set and $\nu:W\to \mathcal{P}(\mathcal{P}(W))$ is a {\em neighbourhood function} ($f: W\times\mathcal{P}(W)\to \mathcal{P}(W)$ is a {\em selection function}). 
In the remainder of the paper, even if it is not explicitly indicated, we will assume that n-frames  are {\em monotone}, i.e.~s.t.~for every $w\in W$, if $X\in \nu(w)$ and $X\subseteq Y$, then $Y\in \nu(w)$.   For any n-frame  (resp.~c-frame) $\mathbb{F}$, the {\em complex algebra} of $\mathbb{F}$ is $\mathbb{F}^\ast: = (\mathcal{P}(W), \abla^{\mathbb{F}^\ast})$ (resp.~$\mathbb{F}^\ast: = (\mathcal{P}(W),  >^{\mathbb{F}^\ast})$) s.t.~for all $X, Y\in \mathcal{P}(W)$,
\begin{center}
$\abla^{\mathbb{F}^\ast} X: = \{w\mid X\in \nu(w)  \} \quad\quad\quad \quad X >^{\mathbb{F}^\ast} Y: =  \{w\mid  f(w, X)\subseteq Y\}. $\end{center}
The complex algebra of an n-frame (resp.~c-frame) is an m-algebra (resp.~a c-algebra).
{\em Models} are pairs $\mathbb{M} = (\mathbb{F}, V)$ such that $\mathbb{F}$ is a frame and  $V:\mathcal{L} \to \mathbb{F}^\ast$ is a homomorphism of the appropriate type. Hence, truth of formulas at states in models is defined as $\mb{M},w \Vdash \varphi$ iff $w\in V(\varphi)$, and unravelling this stipulation   for $\abla$- and $>$-formulas, we get:
\[
\mb{M},w \Vdash \abla \varphi \quad \text{iff}\quad  V(\varphi)\in \nu(w) \quad\quad\quad\quad \mb{M},w \Vdash \varphi> \psi \quad \text{iff}\quad  f(w, V(\varphi))\subseteq V(\psi).
\]
Global satisfaction (notation: $\mathbb{M}\Vdash\phi$) and frame validity (notation: $\mathbb{F}\Vdash\phi$) are defined in the usual way. Thus, by definition, $\mathbb{F}\Vdash\phi$ iff $\mathbb{F}^\ast\models \phi$, from which the soundness of $\mathbf{L}_{\abla}$ (resp.~$\mathbf{L}_{>}$) w.r.t.~the corresponding class of frames  immediately follows from the algebraic soundness. Completeness  follows from algebraic completeness, by observing that (a) the canonical extension of any algebra refuting $\phi$ will also refute $\phi$;  (b) canonical extensions are perfect algebras; (c) perfect algebras can be associated with frames as follows: for any $\mathbb{A} = (\mathcal{P}(W), \abla^{\mathbb{A}})$ (resp.~$\mathbb{A} = (\mathcal{P}(W), >^{\mathbb{A}})$) let $\mathbb{A}_\ast:=(W,\nu_{\abla})$ (resp.~$\mathbb{A}_\ast:=(W,f_{>})$) s.t.~for all $w\in W$ and $X\subseteq W$, 
\[\nu_{\abla}(w): = \{X\subseteq W\mid w\in \abla X\}\quad\quad\quad\quad f_{>}(w, X): = \bigcap\{Y\subseteq W\mid w\in X> Y\}.
\]
If $X\in \nu_{\abla}(w)$ and $X\subseteq Y$, then the monotonicity of $\abla$ implies that $\abla X\subseteq \abla Y$ and hence $Y\in \nu_{\abla}(w)$, as required. By construction, $\mathbb{A}\models\phi$ iff $\mathbb{A}_\ast\Vdash \phi$. This is enough to derive the frame completeness of $\mathbf{L}_{\abla}$ (resp.~$\mathbf{L}_{>}$) from its algebraic completeness.

\begin{proposition} 
\label{prop:dduality single type}If $\mathbb{A}$ is a perfect m-algebra (resp.~c-algebra)   and $\mathbb{F}$ is an  n-frame  (resp.~c-frame), then
 $(\mathbb{F}^\ast)_\ast\cong \mathbb{F}$ and  $(\mathbb{A}_\ast)^\ast\cong \mathbb{A}$.
\end{proposition}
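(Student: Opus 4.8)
The plan is to establish both isomorphisms by directly composing the definitions of the complex-algebra construction $(\cdot)^\ast$ and the dual-frame construction $(\cdot)_\ast$, using the classical discrete duality between sets and complete atomic Boolean algebras as the Boolean backbone. Recall that a perfect Boolean algebra $\mathbb{B}$ is complete and atomic and is canonically isomorphic to $\mathcal{P}(\mathrm{At}(\mathbb{B}))$ via $b\mapsto\{a\in\mathrm{At}(\mathbb{B})\mid a\leq b\}$; hence, modulo this isomorphism, I may present any perfect m-algebra (resp.\ c-algebra) as $\mathbb{A}=(\mathcal{P}(W),\abla^\mathbb{A})$ (resp.\ $\mathbb{A}=(\mathcal{P}(W),>^\mathbb{A})$) with $W=\mathrm{At}(\mathbb{B})$. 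Once such a presentation is fixed, the two constructions will be mutually inverse essentially on the nose, and the symbol $\cong$ (rather than $=$) only records the choice of a bijection $\mathrm{At}(\mathbb{B})\cong W$ implicit in passing to a powerset presentation. So the argument reduces to a short substitution of definitions in each of the two cases, plus one genuine observation in the conditional half of the second isomorphism.

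For $(\mathbb{F}^\ast)_\ast\cong\mathbb{F}$, let $\mathbb{F}=(W,\nu)$ be (monotone). Then $\mathbb{F}^\ast=(\mathcal{P}(W),\abla^{\mathbb{F}^\ast})$ has a perfect Boolean reduct whose atoms are the singletons, and $\abla^{\mathbb{F}^\ast}$ is monotone since $\nu$ is; so $(\mathbb{F}^\ast)_\ast$ is a well-defined monotone n-frame on $W$, and unravelling gives $\nu_{\abla^{\mathbb{F}^\ast}}(w)=\{X\mid w\in\abla^{\mathbb{F}^\ast}X\}=\{X\mid X\in\nu(w)\}=\nu(w)$, i.e.\ $(\mathbb{F}^\ast)_\ast=\mathbb{F}$. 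For a c-frame $\mathbb{F}=(W,f)$ the computation is identical once one notes $f_{>^{\mathbb{F}^\ast}}(w,X)=\bigcap\{Y\mid w\in X>^{\mathbb{F}^\ast}Y\}=\bigcap\{Y\mid f(w,X)\subseteq Y\}=f(w,X)$, the last equality being just that the intersection of all supersets of a set equals that set.

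For $(\mathbb{A}_\ast)^\ast\cong\mathbb{A}$, take $\mathbb{A}=(\mathcal{P}(W),\abla^\mathbb{A})$; then $\mathbb{A}_\ast=(W,\nu_\abla)$, and $\abla^{(\mathbb{A}_\ast)^\ast}X=\{w\mid X\in\nu_\abla(w)\}=\{w\mid w\in\abla^\mathbb{A}X\}=\abla^\mathbb{A}X$, so $(\mathbb{A}_\ast)^\ast=\mathbb{A}$. In the conditional case, with $\mathbb{A}=(\mathcal{P}(W),>^\mathbb{A})$, I must compare $X>^{(\mathbb{A}_\ast)^\ast}Y=\{w\mid f_{>^\mathbb{A}}(w,X)\subseteq Y\}=\{w\mid\bigcap\{Z\mid w\in X>^\mathbb{A}Z\}\subseteq Y\}$ with $X>^\mathbb{A}Y$. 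One inclusion is immediate, since if $w\in X>^\mathbb{A}Y$ then $Y$ is among the sets $Z$ being intersected. For the converse, set $Z_0:=\bigcap\{Z\mid w\in X>^\mathbb{A}Z\}$ and use complete meet-preservation of $>^\mathbb{A}$ in its second coordinate to get $X>^\mathbb{A}Z_0=\bigcap\{X>^\mathbb{A}Z\mid w\in X>^\mathbb{A}Z\}$, so that $w\in X>^\mathbb{A}Z_0$; then $Z_0\subseteq Y$ together with monotonicity in the second coordinate yields $w\in X>^\mathbb{A}Y$. Reading these identities back through the chosen bijection $\mathrm{At}(\mathbb{B})\cong W$ shows that it underlies isomorphisms of the required kind (of n-frames, resp.\ c-frames, in the first case; of m-algebras, resp.\ c-algebras, in the second). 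The one step that is not pure bookkeeping — and hence the main obstacle — is the converse inclusion just discussed, which genuinely uses complete, not merely finite, meet-preservation of $>$ in its second coordinate in a \emph{perfect} c-algebra; everything else is a direct substitution of definitions.
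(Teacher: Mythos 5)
Your proof is correct and is exactly the routine unfolding of the definitions that the paper leaves implicit: three of the four identities hold on the nose once a perfect algebra is presented as a powerset via its atoms. You also correctly isolate the one non-trivial point, namely that the inclusion $\{w \mid f_{>^{\mathbb{A}}}(w,X)\subseteq Y\}\subseteq X>^{\mathbb{A}}Y$ genuinely requires \emph{complete} (not merely finite) meet-preservation of $>^{\mathbb{A}}$ in its second coordinate, which is indeed part of what ``perfect'' must mean here for the proposition to hold.
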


\paragraph{Axiomatic extensions.} A {\em monotone modal logic} (resp.~a {\em conditional logic}) is any  extension of $\mathbf{L}_{\abla}$ (resp.~$\mathbf{L}_{>}$)   with $\mathcal{L}_{\abla}$-axioms (resp.~$\mathcal{L}_{>}$-axioms). Below we collect correspondence results  for axioms that have cropped up in the literature \cite[Theorem 5.1]{hansen2003monotonic} \cite{Olivetti2007ASC}. 
	
\begin{theorem}
\label{theor:correspondence-noAlba}
For every n-frame (resp.~c-frame) $\mathbb{F}$,

{\hspace{-0.7cm}
{\small{
%\begin{center}
\begin{tabular}{@{}rl c l}
N\; & $\mathbb{F}\Vdash\abla \top\quad$ & iff &   $\quad \mathbb{F}\models\forall w[W \in \nu (w)]$\\
P\; & $\mathbb{F}\Vdash\neg \abla \bot\quad$ & iff &  $\quad \mathbb{F}\models\forall w[\varnothing \not \in \nu(w)]$\\
C\; & $\mathbb{F}\Vdash \abla p \land \abla q \to \abla(p \land q)\quad$ & iff &  $\quad \mathbb{F}\models\forall w \forall X \forall Y  [(X \in \nu(w)\ \&\ Y \in \nu (w) )\Rightarrow X \cap Y \in \nu(w)]$\\
T\; & $\mathbb{F}\Vdash\abla p \to p \quad$ & iff &   $\quad \mathbb{F}\models\forall w  \forall X[X \in \nu(w) \Rightarrow w \in X]$\\
4\; & $\mathbb{F}\Vdash\abla \abla  p \to \abla p\quad$ & iff &   $\quad \mathbb{F}\models \forall w   \forall Y \forall X[(X \in \nu (w)\ \&\ \forall x( x\in X\Rightarrow Y \in \nu(x))) \Rightarrow Y \in \nu(w)]$\\
4'\; & $\mathbb{F}\Vdash\abla p \to \abla \abla p\quad$ & iff &  $\quad \mathbb{F}\models\forall w \forall X[X \in \nu(w) \Rightarrow \{y \mid X \in \nu(y)\} \in \nu(w)]$\\
5\; & $\mathbb{F}\Vdash\neg \abla \neg p \to \abla \neg \abla \neg p \quad$ & iff &  $\quad \mathbb{F}\models \forall w \forall X [X \notin \nu(w) \Rightarrow  \{y \mid X \in \nu(y)\}^c \in \nu(w)]$\\
B\; & $\mathbb{F}\Vdash p \to \abla \neg \abla \neg p \quad$ & iff &   $\quad \mathbb{F}\models \forall w\forall X[w \in X \Rightarrow \{y \mid  X^c \in \nu (y)\}^c \in \nu(w)]$\\
D\; & $\mathbb{F}\Vdash \abla p \to \neg \abla \neg p\quad$ & iff &   $\quad \mathbb{F}\models \forall w \forall X[X\in \nu(w)\Rightarrow  X^c \not \in \nu(w)]$\\
%10.\; & $\mathbb{F}\Vdash (p > q) \rightarrow (p \rightarrow q)\quad$ & iff &  $\quad \mathbb{F}\models$\\
CS\; &  $\mathbb{F}\Vdash (p\wedge q) \to (p > q)\quad$ & iff &  $\quad \mathbb{F}\models\forall x\forall Z[f(x,Z)\subseteq \{x\}]$\\
%CEM\;&  $\mathbb{F}\Vdash (p > q) \vee (p> \neg q)\quad$ & iff &  $\quad \mathbb{F}\models\forall w \forall X \forall Z [f(w,X) \subseteq Z \lor f(w,X) \subseteq Z^c]$\\
CEM\;&  $\mathbb{F}\Vdash (p > q) \vee (p> \neg q)\quad$ & iff &  $\quad \mathbb{F}\models\forall X \forall y[|f(y,X)|\leq 1]$\\

ID\; & $\mathbb{F}\Vdash p >  p \quad$ & iff &  $\quad\mathbb{F}\models\forall x\forall Z[f(x,Z)\subseteq Z].$\\
\end{tabular}
%\end{center}
}}
}
\end{theorem}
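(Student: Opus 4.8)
The statement is a list of eleven correspondence results, each of the form $\mathbb{F} \Vdash \phi$ iff $\mathbb{F}$ satisfies a first-order condition on its neighbourhood (resp.\ selection) function. Since by the earlier discussion $\mathbb{F} \Vdash \phi$ iff $\mathbb{F}^\ast \models \phi$, and since frame validity is preserved under taking the dual of a perfect algebra (Proposition~\ref{prop:dduality single type}), each equivalence can be proved by a direct unravelling argument: fix $\mathbb{F} = (W,\nu)$ (resp.\ $(W,f)$), expand $\mathbb{F} \Vdash \phi$ according to the truth definition of $\abla$ (resp.\ $>$), and massage the resulting second-order statement (quantifying over all valuations $V$, i.e.\ over all subsets $V(p) \subseteq W$) into the displayed first-order statement. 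The plan is to treat each row separately but uniformly, using in each direction the standard minimal-valuation technique.

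\medskip
\textbf{The soundness (right-to-left) direction.} Assuming the first-order condition, one shows $\mathbb{M},w \Vdash \phi$ for every model $\mathbb{M}$ on $\mathbb{F}$ and every $w$. This is a routine calculation: unfold the truth conditions, set $X := V(p)$, $Y := V(q)$, and apply the hypothesis. For instance, for $\mathsf{C}$, if $w \Vdash \abla p \wedge \abla q$ then $V(p), V(q) \in \nu(w)$, so by hypothesis $V(p)\cap V(q) \in \nu(w)$, i.e.\ $w \Vdash \abla(p\wedge q)$; for $\mathsf{T}$, if $w \Vdash \abla p$ then $V(p) \in \nu(w)$, so $w \in V(p)$ by hypothesis, i.e.\ $w \Vdash p$. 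The cases $\mathsf{N}$, $\mathsf{P}$, $\mathsf{D}$, $\mathsf{CS}$, $\mathsf{CEM}$, $\mathsf{ID}$ are immediate, and $\mathsf{4}$, $\mathsf{4'}$, $\mathsf{5}$, $\mathsf{B}$ require keeping track of the set $\{y \mid X \in \nu(y)\} = \abla^{\mathbb{F}^\ast} X$, which is exactly $V(\abla p)$ when $X = V(p)$.

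\medskip
\textbf{The completeness (left-to-right) direction.} Assuming $\mathbb{F} \Vdash \phi$, one must produce, for an arbitrary choice of the relevant subsets, a valuation witnessing the first-order condition. The key observation is that the antecedents of all these axioms are, up to the propositional variables, either a propositional variable, its negation, or $\abla$ of such; hence choosing $V(p) := X$ (or $V(p) := X^c$, as dictated by the shape of the antecedent) makes the antecedent of $\phi$ true at the relevant world, and frame validity then forces the consequent, which upon unravelling is precisely the required first-order conclusion. For the monotonicity-sensitive cases $\mathsf{4}$ and $\mathsf{5}$ one should take care that the chosen valuation respects the standing assumption that $\nu$ is monotone; this is automatic since any subset of $W$ is a legitimate value of $V(p)$ and $\nu$ is monotone by hypothesis on $\mathbb{F}$.

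\medskip
\textbf{Main obstacle.} There is no deep obstacle: these are all Sahlqvist-type (indeed, inductive) formulas in the two-sorted reading, so the general correspondence machinery invoked later in the paper (Section~\ref{sec:ALBA runs}) applies and in fact this theorem is cited from \cite{hansen2003monotonic} and \cite{Olivetti2007ASC}. The only mild subtlety is bookkeeping: getting the minimal-valuation substitution right for the nested cases ($\mathsf{4}$, $\mathsf{4'}$, $\mathsf{5}$, $\mathsf{B}$), where the relevant set is $\abla^{\mathbb{F}^\ast}$ applied to the chosen value, and correctly handling the complement operations in $\mathsf{5}$ and $\mathsf{B}$ (arising from the $\neg$'s surrounding the inner $\abla$) and the cardinality reading $|f(y,X)| \le 1$ of $\mathsf{CEM}$ (from $(p>q)\vee(p>\neg q)$: validity for all $V(q)$ forces that no two distinct points can both lie in $f(y,X)$). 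Accordingly, I would present one representative verification in full — say $\mathsf{4}$ or $\mathsf{5}$ — and remark that the remaining rows are analogous, referring to \cite{hansen2003monotonic,Olivetti2007ASC} for the others.
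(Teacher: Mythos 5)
Your proposal is correct, but it takes a genuinely different route from the one the paper actually carries out. The paper does not argue these equivalences by direct unravelling on the single-sorted frames: it first translates each axiom into the two-sorted normal language (e.g.\ $\abla p \to p$ becomes $\xdianu\dboxni p \leq p$), then runs the ALBA reduction in Appendix~\ref{sec:ALBA runs} -- first approximation, residuation/adjunction, Ackermann -- to eliminate the propositional variables, and only then passes through the standard translation, the interpretation of the relations $R_\nu, R_{\nu^c}, R_\ni, R_{\not\ni}, T_f$ on $\mathbb{F}^\star$, and a final monotonicity simplification to land on the displayed first-order conditions. Your plan instead is the classical minimal-valuation argument directly on $(W,\nu)$ or $(W,f)$: instantiate $V(p):=X$ (or $X^c$), use monotonicity of $\nu$ where the antecedent is $\abla$-guarded, and read off the condition. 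Both arguments are sound, and your handling of the delicate points (the sets $\{y\mid X\in\nu(y)\}=\abla^{\mathbb{F}^\ast}X$ in 4, 4$'$, 5, B; the complements in 5 and B; the two-valuation trick for CEM) is the right bookkeeping. What your route buys is self-containedness and elementarity -- no two-sorted apparatus is needed, and this is essentially how the cited sources prove these facts. What the paper's route buys is uniformity and reuse: the ALBA computations are guaranteed to succeed because the translated axioms are (analytic) inductive, the same derivations simultaneously produce the analytic structural rules and establish their soundness (the steps marked $(\star)$), and the whole exercise substantiates the paper's central claim that non-normal correspondence theory is an instance of normal multi-type correspondence. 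So your proof is acceptable as a verification of the theorem, but it bypasses the methodological content that the paper's own proof is designed to exhibit.
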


\section{Semantic analysis}

\subsection{Two-sorted Kripke frames and their discrete duality}
\label{ssec:2sorted Kripke frame}
Structures similar to those below are considered implicitly in \cite{hansen2003monotonic}, and explicitly in \cite{frittella2017dual}.
\begin{definition}\label{def:2sorted Kripke frame}
A {\em two-sorted n-frame} (resp.~{\em c-frame}) is a structure $\mathbb{K}: = (X, Y, R_\ni, R_{\not\ni}, R_\nu, R_{\nu^c})$ (resp.~$\mathbb{K}: = (X, Y, R_\ni, R_{\not\ni}, T_f)$) such that $X$ and $Y$ are nonempty sets, $R_\ni, R_{\not\ni}\subseteq Y\times X$ and $R_\nu, R_{\nu^c}\subseteq X\times Y$ and $T_f\subseteq X\times Y\times X$. %moreover $R_\nu[x]: = \{y\in Y\mid xR_{\nu}y\}$ is {\em upward-closed}, i.e.~if $y\in R_\nu[x]$ and $y\leq y'$ then $y'\in R_\nu[x]$. 
Such an n-frame is {\em supported} if for every $D\subseteq X$,
\begin{equation}
\label{eq:supported n-frames}
R_{\nu}^{-1}[(R_{\ni}^{-1}[D^c])^c] = (R_{\nu^c}^{-1}[(R_{\not\ni}^{-1}[D])^c])^c.
\end{equation}
For any two-sorted n-frame  (resp.~c-frame) $\mathbb{K}$, the {\em complex algebra} of $\mathbb{K}$ is \[\mathbb{K}^+: = (\mathcal{P}(X), \mathcal{P}(Y), \dboxni^{\mathbb{K}^+}, \ddianni^{\mathbb{K}^+}, \xdianu^{\mathbb{K}^+}, \xboxnuc^{\mathbb{K}^+}) \quad  \text{(resp.~}\mathbb{K}^+: = (\mathcal{P}(X), \mathcal{P}(Y), \dboxni^{\mathbb{K}^+}, \drhdnni^{\mathbb{K}^+}, \mtra^{\mathbb{K}^+})\text{), s.t.}\] 
{\small{
\begin{center}
\begin{tabular}{r  cr c r}
$\xdianu^{\mathbb{K}^+}: \mathcal{P}(Y)\to \mathcal{P}(X)$ &$\quad$ & $\dboxni^{\mathbb{K}^+}: \mathcal{P}(X)\to \mathcal{P}(Y)$ &$\quad$ &$\ddianni^{\mathbb{K}^+}: \mathcal{P}(X)\to \mathcal{P}(Y)$\\ 
$U\mapsto R^{-1}_\nu[U]$ && $D\mapsto (R_\ni^{-1}[D^c])^c$ && $D\mapsto R_{\not\ni}^{-1}[D]$ \\
&&&\\
$\xboxnuc^{\mathbb{K}^+}: \mathcal{P}(Y)\to \mathcal{P}(X)$ &&$\drhdnni^{\mathbb{K}^+}: \mathcal{P}(X)\to \mathcal{P}(Y)$&$\quad$ & $\mtra^{\mathbb{K}^+}: \mathcal{P}(Y)\times \mathcal{P}(X)\to \mathcal{P}(X)$ \\
 $U\mapsto (R^{-1}_{\nu^c}[U^c])^c$ &&$D\mapsto (R_{\not\ni}^{-1}[D])^c$&& $(U, D)\mapsto (T_f^{(0)}[U, D^c])^c$\\
\end{tabular}
\end{center}
}}
%where for every $D\in \mathcal{P}(X)$ and $U\in \mathcal{P}(Y)$, 
%\[R^{-1}_\nu[U]: = \{x\in X\mid \exists y(xR_\nu y\ \&\ y\in U)\}\quas\quad (R^{-1}_\ni[D^c])^c: = \{y\in Y\mid \forall x(yR_\ni x\ \to x\in D)\}\]
%\[(R^{-1}_{\not\ni}[D])^c: = \{y\in Y\mid \forall x(yR_\ni x\ \to x\notin D)\}\]
%\[(T_f^{(0)}[U, D^c])^c
%=\{x\in X\mid\  \forall y\forall x'(T_f(x,y,x')\ \&\  y\in U \Rightarrow x'\in D)\}.\]
\end{definition}
%The stipulations above guarantee that the operations $\xdianu^{\mathbb{K}^+}$, $\dboxni^{\mathbb{K}^+}$ , $\drhdnni^{\mathbb{K}^+}$ and $\mtra^{\mathbb{K}^+}$ are normal modal operators. In particular, $\xdianu^{\mathbb{K}^+}$ is completely join-preserving, $\dboxni^{\mathbb{K}^+}$ is completely meet-preserving,  $\drhdnni^{\mathbb{K}^+}$ is completely join-reversing and $\mtra^{\mathbb{K}^+}$ is completely join-reversing in its first coordinate and completely meet-preserving in its second coordinate. Hence, 
The adjoints and residuals of the maps above (cf.~Section \ref{ssec:notation}) are defined as follows:
{\small{
\begin{center}
\begin{tabular}{r  cr c rcr}
$\dboxun^{\mathbb{K}^+}: \mathcal{P}(X)\to \mathcal{P}(Y)$ &$\quad$ & $\xdiain^{\mathbb{K}^+}: \mathcal{P}(Y)\to \mathcal{P}(X)$ &$\quad$ &$\xboxnin^{\mathbb{K}^+}: \mathcal{P}(Y)\to \mathcal{P}(X)$ \\
$D\mapsto (R_\nu[D^c])^c$ && $U\mapsto R_\ni[U]$ &&$U\mapsto (R_{\not\ni}[U^c])^c$ \\
&&\\

$\ddiaunc^{\mathbb{K}^+}: \mathcal{P}(X)\to \mathcal{P}(Y)$ && $\xrhdnin^{\mathbb{K}^+}: \mathcal{P}(Y)\to \mathcal{P}(X)$ &$\quad$ & $\mtbra^{\mathbb{K}^+}: \mathcal{P}(X)\times \mathcal{P}(X)\to \mathcal{P}(Y)$ \\ 

 $D\mapsto R_{\nu^c}[D]$   && $U\mapsto (R_{\not\ni}[U])^c$&& $(C, D)\mapsto (T_f^{(1)}[C, D^c])^c$\\

&& $\mtAND^{\mathbb{K}^+}: \mathcal{P}(Y)\times \mathcal{P}(X)\to \mathcal{P}(X)$ \\ 
&& $(U, D)\mapsto T_f^{(2)}[U, D]$  \\
\end{tabular}
\end{center}
}}
%where for every $C, D\in \mathcal{P}(X)$ and $U\in \mathcal{P}(Y)$, 
%\[(R_\nu[D^c])^c: = \{y\in Y\mid \forall x(xR_\nu y\ \to \ y\in D)\}\quas\quad R_\ni[U]: = \{x\in X\mid \exists y(yR_\ni x\ \&\ y\in U)\}\]
%\[(R_{\not\ni}[U])^c: = \{x\in X\mid \forall y(yR_{\not\ni} x\ \to\ y\notin U)\}\]
%\[T_f^{(2)}[U, D] =\{x\in X\mid\  \exists x'\exists y(T_f(x',y,x)\ \&\  x'\in D \ \&\ y\in U)\},\]
%\[(T_f^{(1)}[C, D^c])^c =\{y\in Y\mid\  \forall x\forall x'(T_f(x,y,x')\ \&\  x\in C \Rightarrow x'\in D)\}.\]
%\marginnote{briefly: definition, complex algebras, general heterogeneous algebras of the same type and canonical extensions of heterogeneous algebras, perfect het algebras, from perfect to 2-sorted kripke frames (i.e.~sketching the het version of discrete duality for Kripke frames)}
Complex algebras of two-sorted frames can be recognized as heterogeneous algebras (cf.~\cite{birkhoff1970heterogeneous}) of the following kind:
\begin{definition}
\label{def:heterogeneous algebras}
A {\em heterogeneous m-algebra} (resp.~{\em c-algebra}) is a structure \[\mathbb{H}: = (\mathbb{A}, \mathbb{B}, \dboxni^{\mathbb{H}},  \ddianni^{\mathbb{H}}, \xdianu^{\mathbb{H}}, \xboxnuc^{\mathbb{H}}) \quad\quad\text{(resp.~}\mathbb{H}: = (\mathbb{A}, \mathbb{B}, \dboxni^{\mathbb{H}}, \drhdnni^{\mathbb{H}}, \mtra^{\mathbb{H}})\text{)}\]
such that $\mathbb{A}$ and $\mathbb{B}$ are Boolean algebras, $\xdianu^{\mathbb{H}}, \xboxnuc: \mathbb{B}\to \mathbb{A}$  are finitely join-preserving and finitely meet-preserving respectively,   $\dboxni^{\mathbb{H}}, \drhdnni^{\mathbb{H}}, \ddianni^{\mathbb{H}}: \mathbb{A}\to \mathbb{B}$ are finitely meet-preserving, finitely join-reversing, and finitely join-preserving respectively, and $\mtra^{\mathbb{H}}:\mathbb{B}\times \mathbb{A}\to \mathbb{A}$ is finitely join-reversing in its first coordinate and finitely meet-preserving in its second coordinate. 
Such an $\mathbb{H}$ is {\em complete} if $\mathbb{A}$ and $\mathbb{B}$ are complete Boolean algebras and the operations above enjoy the complete versions of the finite preservation properties indicated above, and is {\em perfect} if it is complete and $\mathbb{A}$ and $\mathbb{B}$ are perfect.  
The {\em canonical extension} of a heterogeneous m-algebra (resp.~c-algebra)  $\mathbb{H}$ is \mbox{$\mathbb{H}^\delta: = (\mathbb{A}^\delta, \mathbb{B}^\delta, \dboxni^{\mathbb{H}^\delta}, \ddianni^{\mathbb{H}^\delta}, \xdianu^{\mathbb{H}^\delta}, \xboxnuc^{\mathbb{H}^\delta})$} (resp.~$\mathbb{H}^\delta: = (\mathbb{A}^\delta, \mathbb{B}^\delta, \dboxni^{\mathbb{H}^\delta}, \drhdnni^{\mathbb{H}^\delta}, \mtra^{\mathbb{H}^\delta})$), where $\mathbb{A}^\delta$ and $\mathbb{B}^\delta$ are the canonical extensions of $\mathbb{A}$ and $\mathbb{B}$ respectively \cite{jonsson1951boolean}, moreover $\dboxni^{\mathbb{H}^\delta}$, $ \drhdnni^{\mathbb{H}^\delta}$, $\xboxnuc^{\mathbb{H}^\delta}, \mtra^{\mathbb{H}^\delta}$  are the $\pi$-extensions of $\dboxni^{\mathbb{H}}, \drhdnni^{\mathbb{H}}, \xboxnuc^{\mathbb{H}}, \mtra^{\mathbb{H}}$  respectively, and $\xdianu^{\mathbb{H}^\delta},$ $\ddianni^{\mathbb{H}^\delta}$ are the $\sigma$-extensions of $\xdianu^{\mathbb{H}},\ddianni^{\mathbb{H}}$ respectively.
\end{definition}
\begin{definition}
A heterogeneous m-algebra $\mathbb{H}: = (\mathbb{A}, \mathbb{B}, \dboxni^{\mathbb{H}},  \ddianni^{\mathbb{H}}, \xdianu^{\mathbb{H}}, \xboxnuc^{\mathbb{H}})$ is {\em supported}
if $\xdianu^{\mathbb{H}} \dboxni^{\mathbb{H}}a = \xboxnuc^{\mathbb{H}}\ddianni^{\mathbb{H}} a$ for every $a\in \mathbb{A}$.
%\begin{equation}
%\label{eq:two-translations}
%\xdianu^{\mathbb{H}} \dboxni^{\mathbb{H}}a = \xboxnuc^{\mathbb{H}}\ddianni^{\mathbb{H}} a. 
%\end{equation}
\end{definition}
It immediately follows from the definitions that 
\begin{lemma}
The complex algebra of a supported two-sorted n-frame is a heterogeneous  supported m-algebra. 
\end{lemma}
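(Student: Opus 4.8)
The plan is to verify, directly from the definitions, the two clauses in the definition of a heterogeneous supported m-algebra: first that $\mathbb{K}^+$ is a heterogeneous m-algebra at all, and then that it satisfies the support identity. Both reduce to unfolding notation, invoking the general facts on relational modal operators recorded in Section~\ref{ssec:notation} and, for the second clause, the hypothesis that $\mathbb{K}$ is supported.

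For the first part, note that $\mathcal{P}(X)$ and $\mathcal{P}(Y)$ are (perfect) Boolean algebras, so it suffices to identify each of the four operations of $\mathbb{K}^+$ with a relational modal operator of the kind discussed in Section~\ref{ssec:notation} and read off its preservation properties. Concretely, $\xdianu^{\mathbb{K}^+}U = R_\nu^{-1}[U] = \langle R_\nu\rangle U$, so $\xdianu^{\mathbb{K}^+}$ is completely (hence finitely) join-preserving; $\dboxni^{\mathbb{K}^+}D = (R_\ni^{-1}[D^c])^c = [R_\ni]D$, so $\dboxni^{\mathbb{K}^+}$ is completely meet-preserving; $\ddianni^{\mathbb{K}^+}D = R_{\not\ni}^{-1}[D] = \langle R_{\not\ni}\rangle D$, so $\ddianni^{\mathbb{K}^+}$ is completely join-preserving; and $\xboxnuc^{\mathbb{K}^+}U = (R_{\nu^c}^{-1}[U^c])^c = [R_{\nu^c}]U$, so $\xboxnuc^{\mathbb{K}^+}$ is completely meet-preserving. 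These are exactly the preservation requirements imposed on $\xdianu^{\mathbb{H}}$, $\dboxni^{\mathbb{H}}$, $\ddianni^{\mathbb{H}}$, $\xboxnuc^{\mathbb{H}}$ in Definition~\ref{def:heterogeneous algebras}, so $\mathbb{K}^+$ is a heterogeneous m-algebra (in fact a perfect one).

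For the second part, I would check the support identity $\xdianu^{\mathbb{K}^+}\dboxni^{\mathbb{K}^+}D = \xboxnuc^{\mathbb{K}^+}\ddianni^{\mathbb{K}^+}D$ for every $D\in\mathcal{P}(X)$. Substituting the definitions of the four operations, the left-hand side is $R_\nu^{-1}[(R_\ni^{-1}[D^c])^c]$ and the right-hand side is $(R_{\nu^c}^{-1}[(R_{\not\ni}^{-1}[D])^c])^c$; thus the identity to be proved is precisely the defining condition~\eqref{eq:supported n-frames} of a supported two-sorted n-frame, which holds by assumption. This completes the argument.

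There is no genuine obstacle here — the statement is an exercise in matching notation, as the lemma's preamble ("It immediately follows from the definitions that") already indicates. The only point requiring a little care is bookkeeping of the complementations $(\cdot)^c$, so as to confirm that the composite operators on the two sides of the support equation are unfolded correctly and coincide, respectively, with the two sides of~\eqref{eq:supported n-frames}.
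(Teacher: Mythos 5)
Your proposal is correct and matches the paper's (implicit) argument exactly: the paper offers no proof beyond the remark that the lemma ``immediately follows from the definitions,'' and your unfolding --- identifying the four operations of $\mathbb{K}^+$ with the normal operators $\langle R_\nu\rangle$, $[R_\ni]$, $\langle R_{\not\ni}\rangle$, $[R_{\nu^c}]$ of Section~\ref{ssec:notation} and observing that the support identity is literally condition~\eqref{eq:supported n-frames} --- is precisely the intended verification. Nothing further is needed.
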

\begin{definition}
If  $\mathbb{H} = (\mathcal{P}(X), \mathcal{P}(Y), \dboxni^{\mathbb{H}},  \ddianni^{\mathbb{H}}, \xdianu^{\mathbb{H}}, \xboxnuc^{\mathbb{H}})$ is a  perfect heterogeneous m-algebra
 (resp.~$\mathbb{H} = (\mathcal{P}(X), \mathcal{P}(Y), \dboxni^{\mathbb{H}}, \drhdnni^{\mathbb{H}}, \mtra^{\mathbb{H}})$ is a perfect heterogeneous~c-algebra), its associated two-sorted n-frame (resp.~c-frame) is
 \[\mathbb{H}_+: = (X, Y, R_\ni, R_{\not\ni}, R_\nu, R_{\nu^c})\quad\quad \text{(resp.~}\mathbb{H}_+: = (X, Y, R_\ni, R_{\not\ni}, T_f) \text{), s.t.}\]
 {\small{
\begin{itemize}
\item  $R_{\ni}\subseteq Y\times X$ is defined by $yR_\ni x$ iff $y\notin \dboxni^{\mathbb{H}}x^c$,
\item  $R_{\not\ni}\subseteq Y\times X$ is defined by $xR_{\not\ni} y$ iff $y\in \ddianni^{\mathbb{H}}\{x\}$  (resp.~$y\notin \drhdnni^{\mathbb{H}}\{x\}$),
\item $R_\nu\subseteq X\times Y$ is defined by $xR_\nu y$ iff $x\in \xdianu^{\mathbb{H}}\{y\}$, 
\item $R_{\nu^c}\subseteq X\times Y$ is defined by $xR_{\nu^c} y$ iff $x\notin \xboxnuc^{\mathbb{H}}y^c$, 
\item $T_f\subseteq X\times Y\times X$ is defined by $(x', y, x)\in T_f$ iff $x'\notin  \{y\}\mtra^{\mathbb{H}} x^c$. 
\end{itemize} 
}}
\end{definition}
From the definition above it readily follows that:
\begin{lemma}
If  $\mathbb{H}$ is a  perfect supported  heterogeneous m-algebra, then $\mathbb{H}_+$ is a supported two-sorted n-frame.
\end{lemma}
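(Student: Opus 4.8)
The plan is to derive the frame-level supportedness condition \eqref{eq:supported n-frames} for $\mathbb{H}_+$ from the algebra-level supportedness of $\mathbb{H}$, exploiting perfectness of $\mathbb{H}$. The first step is to notice that, for every $D\subseteq X$, the two sides of \eqref{eq:supported n-frames} are exactly the composite operations read off the complex algebra $(\mathbb{H}_+)^+$ (cf.\ the complex-algebra operations in Definition \ref{def:2sorted Kripke frame}): by direct unfolding, $R_\nu^{-1}[(R_\ni^{-1}[D^c])^c]=\xdianu^{(\mathbb{H}_+)^+}\dboxni^{(\mathbb{H}_+)^+}D$ on the left, and $(R_{\nu^c}^{-1}[(R_{\not\ni}^{-1}[D])^c])^c=\xboxnuc^{(\mathbb{H}_+)^+}\ddianni^{(\mathbb{H}_+)^+}D$ on the right. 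So it suffices to show that $(\mathbb{H}_+)^+$ is a supported heterogeneous m-algebra.

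Since $\mathbb{H}$ is perfect it has carriers $\mathcal{P}(X)$ and $\mathcal{P}(Y)$, which are also the carriers of $(\mathbb{H}_+)^+$; I would check that the two structures moreover agree on all operations, i.e.\ $(\mathbb{H}_+)^+=\mathbb{H}$. For the completely join-preserving operators $\xdianu^{\mathbb{H}}$ and $\ddianni^{\mathbb{H}}$ one expands along atoms: $\xdianu^{\mathbb{H}}U=\bigcup_{y\in U}\xdianu^{\mathbb{H}}\{y\}=\{x\mid \exists y\in U\ (xR_\nu y)\}=R_\nu^{-1}[U]$ by the definition of $R_\nu$, and symmetrically $\ddianni^{\mathbb{H}}D=R_{\not\ni}^{-1}[D]$. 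For the completely meet-preserving operators $\dboxni^{\mathbb{H}}$ and $\xboxnuc^{\mathbb{H}}$ one uses the dual decomposition $D=\bigcap_{x\in D^c}x^c$: then $y\notin\dboxni^{\mathbb{H}}D$ iff $y\notin\dboxni^{\mathbb{H}}x^c$ for some $x\in D^c$, i.e.\ iff $yR_\ni x$ for some $x\in D^c$, so $\dboxni^{\mathbb{H}}D=(R_\ni^{-1}[D^c])^c$, and symmetrically $\xboxnuc^{\mathbb{H}}U=(R_{\nu^c}^{-1}[U^c])^c$. Granting these identifications, for every $D\subseteq X$
\[
R_\nu^{-1}[(R_\ni^{-1}[D^c])^c]\;=\;\xdianu^{\mathbb{H}}\dboxni^{\mathbb{H}}D\;=\;\xboxnuc^{\mathbb{H}}\ddianni^{\mathbb{H}}D\;=\;(R_{\nu^c}^{-1}[(R_{\not\ni}^{-1}[D])^c])^c,
\]
where the middle equality is the supportedness of $\mathbb{H}$; hence $\mathbb{H}_+$ is a supported two-sorted n-frame.

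Most of this is routine bookkeeping; the one point requiring attention is the reconstruction of the box-type connectives $\dboxni^{\mathbb{H}}$ and $\xboxnuc^{\mathbb{H}}$ from the relations $R_\ni$ and $R_{\nu^c}$, where the complementations must be tracked carefully and one genuinely needs perfectness of $\mathbb{H}$ --- finite meet-preservation would not suffice, since $D$ (resp.\ $U$) is in general an infinite meet of co-atoms. Alternatively, the whole middle step collapses to a citation: the identifications above are precisely the statement that $(\mathbb{H}_+)^+\cong\mathbb{H}$ for perfect $\mathbb{H}$ (the multi-type discrete duality, cf.\ Proposition \ref{prop:dduality multi-type}), so under that result the lemma follows immediately from the fact that supportedness is preserved by isomorphisms of heterogeneous m-algebras.
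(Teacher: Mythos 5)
Your proof is correct and is essentially the argument the paper intends: the paper states this lemma with no proof beyond ``it readily follows from the definition'', and what you have written is exactly that unfolding --- identifying the operations of $(\mathbb{H}_+)^+$ with those of $\mathbb{H}$ via complete join/meet preservation over atoms and co-atoms (equivalently, invoking $(\mathbb{H}_+)^+\cong\mathbb{H}$ from Proposition~\ref{prop:dduality multi-type}), so that the frame condition \eqref{eq:supported n-frames} becomes the algebraic identity $\xdianu^{\mathbb{H}}\dboxni^{\mathbb{H}}D=\xboxnuc^{\mathbb{H}}\ddianni^{\mathbb{H}}D$. Your remark that perfectness (and not mere finite preservation) is what licenses the reconstruction of the box-type operations from the relations is also exactly the right point to flag.
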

The theory of canonical extensions (of maps) and the  duality between perfect BAOs and Kripke frames can be readily extended to the present two-sorted case. The following proposition collects these well known facts, the proofs of which are analogous to those of the single-sort case, hence  are omitted.  
\begin{proposition} \label{prop:dduality multi-type}
For every heterogeneous m-algebra (resp.~c-algebra)  $\mathbb{H}$ and every two-sorted n-frame  (resp.~c-frame) $\mathbb{K}$,
\begin{enumerate}
\item $\mathbb{H}^\delta$ is a perfect heterogeneous m-algebra (resp.~c-algebra);
\item $\mathbb{K}^+$ is a perfect heterogeneous m-algebra (resp.~c-algebra);
\item $(\mathbb{K}^+)_+\cong \mathbb{K}$, and if $\mathbb{H}$ is  perfect, then $(\mathbb{H}_+)^+\cong \mathbb{H}$.
\end{enumerate}
\end{proposition}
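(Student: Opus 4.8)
The plan is to read Proposition~\ref{prop:dduality multi-type} as the two-sorted lift of the classical J\'onsson--Tarski duality between perfect BAOs and Kripke frames, combined with the standard theory of $\sigma$- and $\pi$-extensions of monotone and antitone maps; items (1) and (2) are then routine type-checking, and the content of the proof lies in item (3). Throughout, when $\mathbb{H}$ is perfect, I would identify $\mathbb{A}$ with $\mathcal{P}(X)$ for $X := \mathsf{At}(\mathbb{A})$ and $\mathbb{B}$ with $\mathcal{P}(Y)$ for $Y := \mathsf{At}(\mathbb{B})$, and I would use repeatedly the following decomposition principles in a perfect Boolean algebra: a completely join-preserving map is determined by its values on atoms (with $a \leq f(b)$ iff $a \leq f(b')$ for some atom $b' \leq b$, when $a$ is an atom), a completely meet-preserving map is dually determined by its values on co-atoms, and a map turning finite joins into finite meets in some coordinate is determined there by its values at the atoms, or at the co-atoms, of that coordinate.

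For items (1) and (2), recall that by \cite{jonsson1951boolean} $\mathbb{A}^\delta$ and $\mathbb{B}^\delta$ are perfect Boolean algebras, and $\mathcal{P}(X),\mathcal{P}(Y)$ obviously are; so in both cases it remains only to verify the preservation profiles of the operations. For item (1) this is a direct appeal, operation by operation, to the general results on canonical extensions of maps (cf.~\cite{gehrke2004bounded}): the $\sigma$-extension of a finitely join-preserving map is completely join-preserving, covering $\xdianu^{\mathbb{H}^\delta}$ and $\ddianni^{\mathbb{H}^\delta}$; the $\pi$-extension of a finitely meet-preserving map is completely meet-preserving, covering $\dboxni^{\mathbb{H}^\delta}$ and $\xboxnuc^{\mathbb{H}^\delta}$; an order-reversing map sending finite joins to finite meets is handled by applying the same results in the coordinate formally dualised to $\mathbb{A}^{op}$, so that its $\pi$-extension is completely join-reversing, covering $\drhdnni^{\mathbb{H}^\delta}$; and the binary $\mtra^{\mathbb{H}^\delta}$ is treated coordinatewise in the same way, being completely join-reversing in the first coordinate and completely meet-preserving in the second, with $\mtAND$ and $\mtbra$ recovered as its residuals. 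For item (2), the observations in Section~\ref{ssec:notation} already state that the relational operators $\langle R\rangle, [R], [R\rangle, \langle R]$ and $\mtra_R, \mtAND_R, \mtbra_R$ are completely normal with exactly the required profiles; matching $\xdianu^{\mathbb{K}^+}$ to $\langle R_\nu\rangle$, $\dboxni^{\mathbb{K}^+}$ to $[R_\ni]$, $\ddianni^{\mathbb{K}^+}$ (resp.~$\drhdnni^{\mathbb{K}^+}$) to $\langle R_{\not\ni}\rangle$ (resp.~$[R_{\not\ni}\rangle$), $\xboxnuc^{\mathbb{K}^+}$ to $[R_{\nu^c}]$, and $\mtra^{\mathbb{K}^+}$ to $\mtra_{T_f}$ then finishes item (2) and confirms en passant that the displayed adjoints and residuals are the genuine ones.

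For item (3), since the atoms of $\mathcal{P}(X)$ are the singletons, $x \mapsto \{x\}$ identifies $X$ with $\mathsf{At}(\mathcal{P}(X))$ and likewise for $Y$; under these identifications one checks for $(\mathbb{K}^+)_+ \cong \mathbb{K}$ that each relation reconstructed by $(\cdot)_+$ coincides with the original one. For instance $x\,R_\nu\,y$ in $(\mathbb{K}^+)_+$ means $\{x\} \subseteq \xdianu^{\mathbb{K}^+}\{y\} = R_\nu^{-1}[\{y\}]$, i.e.~$x\,R_\nu\,y$ in $\mathbb{K}$; and $y\,R_\ni\,x$ in $(\mathbb{K}^+)_+$ means $\{y\} \not\subseteq \dboxni^{\mathbb{K}^+}(x^c) = (R_\ni^{-1}[\{x\}])^c$, i.e.~$y \in R_\ni^{-1}[\{x\}]$, i.e.~$y\,R_\ni\,x$ in $\mathbb{K}$; the relations $R_{\not\ni}$, $R_{\nu^c}$ (and $T_f$ in the conditional case) are handled the same way, the nested complements cancelling each time. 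For $(\mathbb{H}_+)^+ \cong \mathbb{H}$ with $\mathbb{H}$ perfect, identify $\mathbb{A}$ with $\mathcal{P}(X)$ and $\mathbb{B}$ with $\mathcal{P}(Y)$ as above and apply the decomposition principles: since $\xdianu^{\mathbb{H}}$ is completely join-preserving, $x \in \xdianu^{\mathbb{H}}(U)$ iff $x \in \xdianu^{\mathbb{H}}\{y\}$ for some $y \in U$, iff $x\,R_\nu\,y$ for some $y \in U$, iff $x \in R_\nu^{-1}[U] = \xdianu^{(\mathbb{H}_+)^+}(U)$; since $\dboxni^{\mathbb{H}}$ is completely meet-preserving, $\dboxni^{\mathbb{H}}(D) = \bigwedge\{\dboxni^{\mathbb{H}}(x^c) \mid x \notin D\}$, and unwinding the definition of $R_\ni$ gives $\dboxni^{\mathbb{H}}(D) = [R_\ni](D) = \dboxni^{(\mathbb{H}_+)^+}(D)$. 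The operations $\ddianni^{\mathbb{H}}$, $\drhdnni^{\mathbb{H}}$, $\xboxnuc^{\mathbb{H}}$ are treated analogously — via atoms for the join-preserving one, via co-atoms for the meet-preserving and the join-reversing ones — and $\mtra^{\mathbb{H}}$ coordinatewise, decomposing the first argument into atoms of $\mathbb{B}$ (it is join-reversing there) and the second into co-atoms of $\mathbb{A}$ (it is meet-preserving there), which recovers $\mtra_{T_f}$.

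I expect the main obstacle to be organisational rather than conceptual: across the five (resp.~three) paired relations and operations one must keep consistent track of which variance forces decomposition into atoms versus co-atoms, and hence of which nested applications of $(\cdot)^c$ must cancel. The places most prone to a sign slip are the antitone operator $\drhdnni$ and the first coordinate of $\mtra$, where the atom/co-atom roles are exchanged relative to the diamond- and box-type operators; but once the identifications and the decomposition principles are in place, each individual verification is a one-line computation, which is exactly why the single-sorted templates transfer essentially verbatim.
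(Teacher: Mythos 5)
Your proposal is correct and follows exactly the route the paper intends: the paper omits this proof, stating only that it is the routine two-sorted extension of the J\'onsson--Tarski duality between perfect BAOs and Kripke frames together with the standard theory of $\sigma$- and $\pi$-extensions of maps, and your operation-by-operation type-checking for (1)--(2) and atom/co-atom decomposition for (3) is precisely that argument spelled out. No gaps.
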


\subsection{Equivalent representation of  m-algebras  and c-algebras}

Every supported heterogeneous m-algebra (resp.~c-algebra) can be associated with an m-algebra (resp.~a c-algebra) as follows:
\begin{definition}
For every  supported heterogeneous m-algebra  $\mathbb{H} = (\mathbb{A},\mathbb{B}, \dboxni^{\mathbb{H}}, \ddianni^{\mathbb{H}}, \xdianu^{\mathbb{H}}, \xboxnuc^{\mathbb{H}})$ 
 (resp.~c-algebra $\mathbb{H} = (\mathbb{A},\mathbb{B}, \dboxni^{\mathbb{H}}, \drhdnni^{\mathbb{H}}, \mtra^{\mathbb{H}})$),  let $\mathbb{H}_\bullet:  = (\mathbb{A}, \abla^{\mathbb{H}_\bullet})$ (resp.~$\mathbb{H}_\bullet:  = (\mathbb{A}, >^{\mathbb{H}_\bullet})$), where for every $a\in\mathbb{A}$ (resp.~$a, b\in \mathbb{A}$),  
 \[\abla^{\mathbb{H}_\bullet} a = \xdianu^{\mathbb{H}}\dboxni^{\mathbb{H}} a = \xboxnuc^{\mathbb{H}}\ddianni^{\mathbb{H}} a\quad\quad \text{ (resp.~}a >^{\mathbb{H}_\bullet}b: = (\dboxni^{\mathbb{H}} a \wedge \drhdnni^{\mathbb{H}} a)\mtra^{\mathbb{H}} b\text{)}.\]
%
%Let $(\mathbb{A},\mathbb{B},\B,\D)$ be a heterogeneous n-algebra. Then define $\abla:\mathbb{A}\to\mathbb{A}$ by $\abla(a):=\D\B a$. It is immediate that $\abla$ is a monotone map on $\mathbb{A}$. Let $(\mathbb{A},\mathbb{B},\B,\triangle,\mtra)$ be a heterogeneous c-algebra. Then define $\Rightarrow:\mathbb{A}\times\mathbb{A}\to\mathbb{A}$ by $a\Rightarrow b:=(\B a\land \triangle a)\mtra b$. It is immediate that $\Rightarrow$ is a meet-preserving map on the second coordinate.
%
\end{definition}
It immediately follows from the stipulations above that $\abla^{\mathbb{H}_\bullet}$ is a monotone map (resp.~$>^{\mathbb{H}_\bullet}$ is finitely  meet-preserving in its second coordinate), and hence $\mathbb{H}_\bullet$ is an m-algebra (resp.~a c-algebra).
Conversely, every complete m-algebra (resp.~c-algebra) can be associated with a supported heterogeneous m-algebra (resp.~a c-algebra) as follows:
\begin{definition}
For every  complete m-algebra  $\mathbb{C} = (\mathbb{A},\abla^{\mathbb{C}})$ (resp.~complete c-algebra  $\mathbb{C} = (\mathbb{A},>^{\mathbb{C}})$),  let $\mathbb{C}^\bullet:  = (\mathbb{A}, \mathcal{P}(\mathbb{A}), \dboxni^{\mathbb{C}^\bullet}, \ddianni^{\mathbb{C}^\bullet}, \xdianu^{\mathbb{C}^\bullet}, \xboxnuc^{\mathbb{C}^\bullet})$ (resp.~$\mathbb{C}^\bullet:  = (\mathbb{A}, \mathcal{P}(\mathbb{A}), \dboxni^{\mathbb{C}^\bullet},$ \mbox{$ \drhdnni^{\mathbb{C}^\bullet},$} $\mtra^{\mathbb{C}^\bullet})$), where for every $a\in\mathbb{A}$ and $B\in \mathcal{P}(\mathbb{A})$,
{\small{
\[\dboxni^{\mathbb{C}^\bullet} a: = \{b\in\mathbb{A}\mid b\leq a\}\quad \quad\xdianu^{\mathbb{C}^\bullet}B: = \bigvee \{\abla^{\mathbb{C}} b\mid b\in B\} \quad\quad  \drhdnni^{\mathbb{C}^\bullet} a: = \{b\in\mathbb{A}\mid a\leq b\}\]
\[\xboxnuc^{\mathbb{C}^\bullet} B: =\bigwedge \{\abla^{\mathbb{C}}b\mid b\notin B\} \quad B \mtra^{\mathbb{C}^\bullet} a: = \bigwedge\{b >^{\mathbb{C}} a\mid b\in B\}\quad  \ddianni^{\mathbb{C}^\bullet} a: = \{b\in\mathbb{A}\mid a\nleq b\}.\]
}}
\end{definition}
%Let $\mathbb{A}$ be a complete lattice and let $\abla^\mathbb{A}:\mathbb{A}\to\mathbb{A}$ be a monotone map. Then define the heterogeneous algebraic model $(\mathbb{A},\mathcal{P}(\mathbb{A}),\B_\downarrow,\D_\abla)$ where $\B_\downarrow(a)=\{b\in\mathbb{A}\mid b\leq a\}$ and $\D_\abla(X)=\bigvee_{a\in X}\abla(a)$. It is routine to check that  $\B_\downarrow$ and $\D_\abla$ are meet and join preserving maps respectively and that $\D_\abla\B_\downarrow(a)=\abla^\mathbb{A}$.
%
%Let $\mathbb{A}$ be a complete lattice and let $\Rightarrow:\mathbb{A}\times\mathbb{A}\to\mathbb{A}$ be a meet-preserving map on the second coordinate. Define the heterogeneous algebraic model $(\mathbb{A},\mathcal{P}(\mathbb{A}),\B_\downarrow,\DRHDNNI_\uparrow,\mtra_{\Rightarrow})$ where $\B_\downarrow(a)=\{b\in\mathbb{A}\mid b\leq a\}$, $\DRHDNNI_\uparrow(a)=\{b\in\mathbb{A}\mid a\leq b\}$ and $X\mtra_{\Rightarrow}b=\bigwedge_{a\in X}a\Rightarrow b$. It is routine to check that $\B_\downarrow$ is meet-preserving, $\DRHDNNI_\uparrow$ is join reversing and $\mtra_{\Rightarrow}$ is join-revesring on the first coordinate and meet preserving on the second and furthermore $(\B_\downarrow a\land\DRHDNNI_\uparrow a)\mtra_{\Rightarrow}b=a\Rightarrow b$.
One can readily see that the operations defined above are all normal by construction, and that they enjoy the complete versions of the preservation properties indicated in Definition \ref{def:heterogeneous algebras}.  Moreover, $\xdianu^{\mathbb{C}^\bullet}\dboxni^{\mathbb{C}^\bullet} a = \abla^{\mathbb{C}} a = \xboxnuc^{\mathbb{C}^\bullet}\ddianni^{\mathbb{C}^\bullet} a$ for every $a\in \mathbb{A}$. Hence,
\begin{lemma}
If $\mathbb{C}$  is a  complete m-algebra  (resp.~complete c-algebra), then
  $\mathbb{C}^\bullet$ is a complete supported heterogeneous m-algebra (resp.~c-algebra). 
\end{lemma}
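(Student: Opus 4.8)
The plan is to verify each clause of the statement directly from the defining equations in Definition of $\mathbb{C}^\bullet$, together with the displayed identities that were already recorded just before the lemma. First I would address completeness: since $\mathbb{C}$ is a complete m-algebra (resp.~c-algebra), $\mathbb{A}$ is a complete Boolean algebra, and $\mathcal{P}(\mathbb{A})$ is a complete (indeed perfect, hence complete) Boolean algebra, so the first requirement in the definition of \emph{complete} heterogeneous algebra is immediate. Then I would check that each of the operations $\dboxni^{\mathbb{C}^\bullet}, \ddianni^{\mathbb{C}^\bullet}, \xdianu^{\mathbb{C}^\bullet}, \xboxnuc^{\mathbb{C}^\bullet}$ (resp.~$\dboxni^{\mathbb{C}^\bullet}, \drhdnni^{\mathbb{C}^\bullet}, \mtra^{\mathbb{C}^\bullet}$) enjoys the appropriate \emph{complete} preservation property as listed in Definition \ref{def:heterogeneous algebras}. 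This is a sequence of short set-theoretic computations: for instance $\dboxni^{\mathbb{C}^\bullet}(\bigwedge_i a_i) = \{b \mid b \leq \bigwedge_i a_i\} = \bigcap_i \{b\mid b\leq a_i\} = \bigcap_i \dboxni^{\mathbb{C}^\bullet} a_i$, giving complete meet-preservation; $\xdianu^{\mathbb{C}^\bullet}(\bigcup_i B_i) = \bigvee\{\abla^{\mathbb{C}} b\mid b\in\bigcup_i B_i\} = \bigvee_i \xdianu^{\mathbb{C}^\bullet} B_i$, giving complete join-preservation; $\ddianni^{\mathbb{C}^\bullet}(\bigvee_i a_i) = \{b\mid \bigvee_i a_i\nleq b\} = \bigcup_i\{b\mid a_i\nleq b\}$ by completeness of $\mathbb{A}$; $\xboxnuc^{\mathbb{C}^\bullet}$ converts unions in $\mathcal{P}(\mathbb{A})$ to meets in $\mathbb{A}$ via the complementation $b\notin B$; and for the c-case $\drhdnni^{\mathbb{C}^\bullet}$ is checked to be completely join-reversing, while $\mtra^{\mathbb{C}^\bullet}$ is completely join-reversing in its first coordinate (a meet over a larger index set $B$ when $B$ grows) and completely meet-preserving in its second coordinate (using that $>^{\mathbb{C}}$ is completely meet-preserving in its second coordinate, which holds because $\mathbb{C}$ is complete).

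Next I would establish the \emph{supported} property, i.e.~the identity $\xdianu^{\mathbb{C}^\bullet}\dboxni^{\mathbb{C}^\bullet} a = \xboxnuc^{\mathbb{C}^\bullet}\ddianni^{\mathbb{C}^\bullet} a$ for all $a\in\mathbb{A}$ (relevant only to the m-algebra case). Unfolding the left-hand side: $\xdianu^{\mathbb{C}^\bullet}\dboxni^{\mathbb{C}^\bullet} a = \bigvee\{\abla^{\mathbb{C}} b \mid b\leq a\}$, and since $\abla^{\mathbb{C}}$ is monotone this supremum equals $\abla^{\mathbb{C}} a$ (the largest term $b = a$ dominates all others, and $a$ itself is in the index set). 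Unfolding the right-hand side: $\xboxnuc^{\mathbb{C}^\bullet}\ddianni^{\mathbb{C}^\bullet} a = \bigwedge\{\abla^{\mathbb{C}} b \mid b\notin \ddianni^{\mathbb{C}^\bullet} a\} = \bigwedge\{\abla^{\mathbb{C}} b\mid a\leq b\}$, and again by monotonicity of $\abla^{\mathbb{C}}$ the infimum equals $\abla^{\mathbb{C}} a$ (the smallest term $b = a$ is dominated by all others and lies in the index set). Hence both sides equal $\abla^{\mathbb{C}} a$, as was in fact already observed in the sentence preceding the lemma; so this clause is essentially a restatement of that observation. Combining the completeness verification, the preservation-property verifications, and this identity yields exactly the assertion that $\mathbb{C}^\bullet$ is a complete supported heterogeneous m-algebra (resp.~c-algebra).

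I do not expect any genuine obstacle here: the lemma is a bookkeeping statement and every ingredient has already been flagged in the text (the operations are "all normal by construction" and "enjoy the complete versions of the preservation properties", and the supportedness identity is displayed). The only point requiring a little care is the c-algebra case of $\mtra^{\mathbb{C}^\bullet}$: one must confirm that meet-preservation in the second coordinate is \emph{complete}, which relies on $>^{\mathbb{C}}$ being completely meet-preserving in its second coordinate; this is where the hypothesis that $\mathbb{C}$ is \emph{complete} (not merely a c-algebra) is actually used, since for a general c-algebra only finite meet-preservation is guaranteed. I would therefore make that dependence explicit and otherwise present the verification as a compact list of one-line computations rather than spelling out every case.
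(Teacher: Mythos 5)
Your verification coincides with the paper's: the paper gives no separate proof, deducing the lemma directly from the immediately preceding observation that the operations of $\mathbb{C}^\bullet$ are normal by construction, enjoy the complete versions of the preservation properties of Definition~\ref{def:heterogeneous algebras}, and satisfy $\xdianu^{\mathbb{C}^\bullet}\dboxni^{\mathbb{C}^\bullet}a=\abla^{\mathbb{C}}a=\xboxnuc^{\mathbb{C}^\bullet}\ddianni^{\mathbb{C}^\bullet}a$ --- which is exactly what you spell out, including the correct identification of where completeness of $>^{\mathbb{C}}$ is needed for $\mtra^{\mathbb{C}^\bullet}$. One wording slip: the property to verify for $\xboxnuc^{\mathbb{C}^\bullet}$ is complete \emph{meet}-preservation (it sends intersections to meets, since $(\bigcap_i B_i)^c=\bigcup_i B_i^c$ and the outer $\bigwedge$ then splits), not ``unions to meets''; the computation is the one you intend, but the target property should be stated as in Definition~\ref{def:heterogeneous algebras}.
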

The assignments $(\cdot)^\bullet$ and $(\cdot)_\bullet$ can be extended to functors between the appropriate  categories of single-type and heterogeneous algebras and their homomorphisms. These functors are adjoint to each other and form a section-retraction pair. Hence:
\begin{proposition}
\label{prop:alg characterization of single into multi}
If   $\mathbb{C}$ is a complete  m-algebra (resp.~c-algebra), then   $\mathbb{C} \cong (\mathbb{C}^\bullet)_\bullet$. Moreover, if $\mathbb{H}$ is a complete supported heterogeneous m-algebra (resp.~c-algebra), then $\mathbb{H}\cong \mathbb{C}^\bullet$ for some complete  m-algebra (resp.~c-algebra) $\mathbb{C}$ iff $\mathbb{H} \cong (\mathbb{H}_\bullet)^\bullet$.
\end{proposition}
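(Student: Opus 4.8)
The plan is to prove the first isomorphism $\mathbb{C}\cong(\mathbb{C}^\bullet)_\bullet$ by directly unfolding the definitions of $(\cdot)^\bullet$ and $(\cdot)_\bullet$, and then to derive the second (``iff'') statement from it by a short diagram chase using the functoriality of $(\cdot)^\bullet$ and $(\cdot)_\bullet$, i.e.~the section-retraction property quoted immediately before the statement.

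For the first part I would note that $\mathbb{C}$ and $(\mathbb{C}^\bullet)_\bullet$ share the same underlying Boolean algebra $\mathbb{A}$, so that it is enough to show the two non-Boolean operations agree, whereupon the identity on $\mathbb{A}$ is the required isomorphism. In the m-algebra case $\dboxni^{\mathbb{C}^\bullet}a=\{b\in\mathbb{A}\mid b\le a\}$, hence $\abla^{(\mathbb{C}^\bullet)_\bullet}a=\xdianu^{\mathbb{C}^\bullet}\dboxni^{\mathbb{C}^\bullet}a=\bigvee\{\abla^{\mathbb{C}}b\mid b\le a\}=\abla^{\mathbb{C}}a$, the last equality holding by monotonicity of $\abla^{\mathbb{C}}$; this is exactly the identity already recorded right before the preceding Lemma. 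In the c-algebra case $\dboxni^{\mathbb{C}^\bullet}a\wedge\drhdnni^{\mathbb{C}^\bullet}a=\{b\in\mathbb{A}\mid b\le a\}\cap\{b\in\mathbb{A}\mid a\le b\}=\{a\}$ in $\mathcal{P}(\mathbb{A})$, so $a>^{(\mathbb{C}^\bullet)_\bullet}b=\{a\}\mtra^{\mathbb{C}^\bullet}b=\bigwedge\{c>^{\mathbb{C}}b\mid c\in\{a\}\}=a>^{\mathbb{C}}b$. This shows that $(\cdot)_\bullet\circ(\cdot)^\bullet$ is the identity (up to natural isomorphism) on complete m-algebras and on complete c-algebras.

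For the second statement, the right-to-left direction is immediate: if $\mathbb{H}\cong(\mathbb{H}_\bullet)^\bullet$ one simply takes $\mathbb{C}:=\mathbb{H}_\bullet$, which is a complete m-algebra (resp.~c-algebra) because its Boolean reduct $\mathbb{A}$ is complete ($\mathbb{H}$ being complete) and $\abla^{\mathbb{H}_\bullet}=\xdianu^{\mathbb{H}}\dboxni^{\mathbb{H}}$ is monotone (resp.~$>^{\mathbb{H}_\bullet}$ is finitely meet-preserving in its second coordinate), as already observed. For the left-to-right direction I would assume $\mathbb{H}\cong\mathbb{C}^\bullet$ for some complete m-algebra (resp.~c-algebra) $\mathbb{C}$, apply the functor $(\cdot)_\bullet$ and use that functors preserve isomorphisms together with the first part to get $\mathbb{H}_\bullet\cong(\mathbb{C}^\bullet)_\bullet\cong\mathbb{C}$, and then apply $(\cdot)^\bullet$ to conclude $(\mathbb{H}_\bullet)^\bullet\cong\mathbb{C}^\bullet\cong\mathbb{H}$.

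The only step that is not pure bookkeeping is the functoriality invoked at the end, and this is where I expect the main obstacle to lie. On the heterogeneous side it is easy: a heterogeneous isomorphism is a pair $(f_{\mathbb{A}},f_{\mathbb{B}})$ of Boolean isomorphisms commuting with all operations, and $f_{\mathbb{A}}$ then automatically commutes with $\abla=\xdianu\dboxni$ (resp.~with $>$), so $(\cdot)_\bullet$ just discards $f_{\mathbb{B}}$. The subtler point is the action of $(\cdot)^\bullet$ on morphisms and the verification that it, too, sends isomorphisms to isomorphisms; here I would either appeal directly to the section-retraction statement quoted above the proposition or spell the functor out explicitly. Finally I would record where the supportedness hypothesis enters: it is exactly what makes $\abla^{\mathbb{H}_\bullet}$ well defined (the two defining expressions $\xdianu^{\mathbb{H}}\dboxni^{\mathbb{H}}a$ and $\xboxnuc^{\mathbb{H}}\ddianni^{\mathbb{H}}a$ coincide), while $\mathbb{C}^\bullet$ is itself supported via the identity $\xdianu^{\mathbb{C}^\bullet}\dboxni^{\mathbb{C}^\bullet}=\abla^{\mathbb{C}}=\xboxnuc^{\mathbb{C}^\bullet}\ddianni^{\mathbb{C}^\bullet}$, so the composites stay inside the relevant class of algebras throughout.
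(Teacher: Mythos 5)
Your proposal is correct and matches the paper's (only sketched) argument: the paper simply asserts that $(\cdot)^\bullet$ and $(\cdot)_\bullet$ extend to functors forming a section--retraction pair and derives the proposition from that, and your direct computations ($\bigvee\{\abla^{\mathbb{C}}b\mid b\le a\}=\abla^{\mathbb{C}}a$ by monotonicity, and $\dboxni^{\mathbb{C}^\bullet}a\cap\drhdnni^{\mathbb{C}^\bullet}a=\{a\}$ in the conditional case) are exactly the verification of that retraction identity, with the ``moreover'' part following by the same standard diagram chase. No gaps.
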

The proposition above characterizes up to isomorphism the supported heterogeneous  m-algebras (resp.~c-algebras) which arise from single-type m-algebras (resp.~c-algebras). 
Thanks to the discrete dualities discussed in Sections \ref{ssec:prelim} and \ref{ssec:2sorted Kripke frame}, we can transfer this algebraic characterization to the side of frames, as detailed in the next subsection.

\subsection{Representing n-frames and c-frames as two-sorted Kripke frames}
\begin{definition}
For any n-frame (resp.~c-frame) $\mathbb{F}$, we let $\mathbb{F}^\star: = ((\mathbb{F}^\ast)^\bullet)_+$, and for every supported two-sorted n-frame (resp.~c-frame) $\mathbb{K}$, we let $\mathbb{K}_\star: = ((\mathbb{K}^+)_\bullet)_\ast$.
\end{definition}
Spelling out the definition above, if $\mathbb{F}=(W,\nu)$ (resp.~$\mathbb{F}=(W, f)$) then  $\mathbb{F}^\star = (W,\mathcal{P} (W), R_\ni, R_{\not\ni}, R_{\nu}, R_{\nu^c})$  (resp.~$\mathbb{F}^\star = (W,\mathcal{P} (W),R_{\not\ni}, R_{\ni}, T_f)$) where:
{\small{
\begin{itemize}
\item $R_{\nu} \subseteq W\times \mathcal{P}(W)$ is defined as $x R_{\nu}  Z$ iff $Y\in \nu(x)$;
\item $R_{\nu^c} \subseteq W\times \mathcal{P}(W)$ is defined as $x R_{\nu^c}  Z$ iff $Z\notin \nu(x)$;
\item  $R_{\ni} \subseteq \mathcal{P}(W) \times W$ is defined as $Z R_{\ni}  x$ iff $x\in Z$;
\item  $R_{\not\ni} \subseteq \mathcal{P}(W) \times W$ is defined as $Z R_{\not\ni}  x$ iff $x\notin Z$;
\item $T_f\subseteq W\times \mathcal{P}(W) \times W$ is defined as $T_f(x, Z, x')$ iff $x'\in f(x, Z)$.
\end{itemize}
}}
Moreover, if $\mathbb{K} = (X, Y, R_\ni, R_{\not\ni}, R_{\nu}, R_{\nu^c})$ (resp.~$\mathbb{K} = (X, Y, R_\ni, R_{\not\ni}, T_f)$), then $\mathbb{K}_{\star} = (X, \nu_{\star})$
(resp.~$\mathbb{K}_\star = (X, f_\star)$) where:
{\small{
\begin{itemize}
\item $\nu_\star (x)  = \{D\subseteq X\mid x\in R_\nu^{-1}[(R_\ni^{-1}[D^c])^c] \} = \{D\subseteq X\mid x\in (R_{\nu^c}^{-1}[(R_{\not\ni}^{-1}[D])^c])^c\}$;
\item $f_\star(x, D)  %\bigcap \{C\subseteq X\mid x\in (\dboxni^{\mathbb{H}} C \wedge \drhdnni^{\mathbb{H}} C)\mtra^{\mathbb{H}} D\}
=  \bigcap \{C\subseteq X\mid x\in T^{(0)}_f[ \{C\}, D^c]\}$.
\end{itemize}
}}
\begin{lemma}
If $\mathbb{F} = (W,\nu)$ is an n-frame, then  $\mathbb{F}^\star$ is a supported two-sorted n-frame. %\Vdash [\nu^c]\langle\not\ni]\neg p = \xdianu\dboxni p$.
\end{lemma}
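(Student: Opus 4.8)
The plan is to exploit the fact that $\mathbb{F}^\star$ is by definition the composite $((\mathbb{F}^\ast)^\bullet)_+$, and to track the relevant structural properties through each of the three constructions involved. First I would note that $\mathbb{F}^\ast = (\mathcal{P}(W), \nabla^{\mathbb{F}^\ast})$ is a \emph{perfect} m-algebra, since its Boolean reduct is a powerset algebra. Next, by the lemma immediately preceding Proposition \ref{prop:alg characterization of single into multi}, $(\mathbb{F}^\ast)^\bullet$ is a complete supported heterogeneous m-algebra; moreover it is perfect, because its two Boolean reducts are $\mathcal{P}(W)$ and $\mathcal{P}(\mathcal{P}(W))$, both of which are perfect, and all its operations are completely normal by construction. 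Finally, applying the lemma stating that $\mathbb{H}_+$ is a supported two-sorted n-frame whenever $\mathbb{H}$ is a perfect supported heterogeneous m-algebra, I would conclude that $\mathbb{F}^\star = ((\mathbb{F}^\ast)^\bullet)_+$ is a supported two-sorted n-frame.

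Alternatively — and this is probably worth spelling out explicitly, since it is short — one can verify equation \eqref{eq:supported n-frames} directly from the unravelled description of $\mathbb{F}^\star$ given just above the statement. Taking $X = W$ and fixing $D \subseteq W$, for the left-hand side one computes $R_\ni^{-1}[D^c] = \{Z \mid Z \not\subseteq D\}$, hence $(R_\ni^{-1}[D^c])^c = \{Z \mid Z \subseteq D\}$, and therefore $R_\nu^{-1}[(R_\ni^{-1}[D^c])^c] = \{x \mid \exists Z\,(Z \subseteq D\ \&\ Z \in \nu(x))\}$; by monotonicity of $\nu$ this set equals $\{x \mid D \in \nu(x)\}$. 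Symmetrically, $R_{\not\ni}^{-1}[D] = \{Z \mid D \not\subseteq Z\}$, so $(R_{\not\ni}^{-1}[D])^c = \{Z \mid D \subseteq Z\}$, whence $R_{\nu^c}^{-1}[(R_{\not\ni}^{-1}[D])^c] = \{x \mid \exists Z\,(D \subseteq Z\ \&\ Z \notin \nu(x))\}$, which by monotonicity of $\nu$ equals $\{x \mid D \notin \nu(x)\}$; complementing yields $\{x \mid D \in \nu(x)\}$ once more. So both sides of \eqref{eq:supported n-frames} coincide, and both in fact describe the set $\{x \in W \mid D \in \nu(x)\} = \nabla^{\mathbb{F}^\ast}D$.

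The only step that is not pure bookkeeping — and the only place where the standing assumption that $\mathbb{F}$ is monotone is genuinely used — is the pair of appeals to monotonicity of $\nu$ that collapse the existential quantifier over $Z$: ``$\exists Z\,(Z \subseteq D\ \&\ Z \in \nu(x))$'' reduces to ``$D \in \nu(x)$'' because $\nu(x)$ is upward closed, and dually ``$\exists Z\,(D \subseteq Z\ \&\ Z \notin \nu(x))$'' reduces to ``$D \notin \nu(x)$''. I do not expect a real obstacle here: the statement is essentially an observation, and the proof is either the functorial-composition argument of the first paragraph or the half-page computation of the second; everything else is unwinding the definitions of $R_\nu, R_{\nu^c}, R_\ni, R_{\not\ni}$ and of relational preimages.
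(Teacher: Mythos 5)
Your direct verification in the second paragraph is essentially the paper's own proof: the paper likewise establishes \eqref{eq:supported n-frames} by a chain of set identities whose only non-trivial step is precisely your collapse of the existential quantifier over $Z$ via upward-closure of $\nu(w)$ (taking $Z:=D$ in one direction and invoking monotonicity of $\nu$ in the other). Your first, ``soft'' argument --- chaining the facts that $\mathbb{C}^\bullet$ is complete and supported and that $\mathbb{H}_+$ of a perfect supported heterogeneous m-algebra is a supported two-sorted n-frame --- is also sound given the surrounding statements, but the paper opts for the explicit computation.
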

\begin{proof}
 By definition, $\mathbb{F}^\star$ is a two-sorted n-frame. Moreover, for any $D\subseteq W$,
{\fns{
\begin{center}
\begin{tabular}{clll}
$(R_{\nu^c}^{-1}[(R_{\not\ni}^{-1}[D])^c])^c$ &  = & $ \{w\mid \forall X(X\notin \nu(w)\Rightarrow \exists u(X\not\ni u\ \&\ u\in D))\}$\\
&  = & $ \{w\mid \forall X(X\notin \nu(w)\Rightarrow   D \not\subseteq  X)\}$\\
&  = & $ \{w\mid \forall X(D\subseteq  X \Rightarrow X\in \nu(w) )\}$\\
&  = & $ \{w\mid \exists X(X\in \nu(w) \ \&\  X\subseteq  D)\}$ & ($\ast$)\\
&  = & $R_{\nu}^{-1}[(R_{\ni}^{-1}[D^c])^c].$\\
\end{tabular}
\end{center}
}}
To show the identity marked with $(\ast)$, from top to bottom, take $X:= D$; conversely, if $D\subseteq Z$ then $X\subseteq Z$, and since  by assumption $X\in \nu(w)$ and $\nu(w)$ is upward closed, we conclude that $Z\in \nu(w)$, as required.
\end{proof}

The next proposition is the frame-theoretic counterpart of Proposition \ref{prop:alg characterization of single into multi}. %and characterizes up to isomorphism the two-sorted n-frames (resp.~c-frames) which arise from single-type n-frames (resp.~c-frames). 
\begin{proposition}
\label{prop:adjunction-frames}
If   $\mathbb{F}$ is an n-frame (resp.~c-frame), then   $\mathbb{F} \cong (\mathbb{F}^\star)_\star$. Moreover, if $\mathbb{K}$ is a supported two-sorted n-frame (resp.~c-frame), then $\mathbb{K}\cong \mathbb{F}^\star$ for some n-frame (resp.~c-frame) $\mathbb{F}$ iff $\mathbb{K} \cong (\mathbb{K}_\star)^\star$.
\end{proposition}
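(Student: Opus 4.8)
The plan is to deduce this statement from its algebraic counterpart, Proposition~\ref{prop:alg characterization of single into multi}, by transporting the isomorphisms recorded there along the discrete dualities of Propositions~\ref{prop:dduality single type} and~\ref{prop:dduality multi-type}. Throughout I will use that each of $(\cdot)^\ast$, $(\cdot)_\ast$, $(\cdot)^+$, $(\cdot)_+$, $(\cdot)^\bullet$, $(\cdot)_\bullet$ extends to a functor between the appropriate categories and hence sends isomorphisms to isomorphisms, and that $(\cdot)^\star$ and $(\cdot)_\star$ are by definition the composite functors $(\cdot)_+\circ(\cdot)^\bullet\circ(\cdot)^\ast$ and $(\cdot)_\ast\circ(\cdot)_\bullet\circ(\cdot)^+$.

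\emph{First claim.} Let $\mathbb{F}$ be an n-frame; the c-frame case is obtained by replacing each construction by its c-variant. Unfolding the definitions, $(\mathbb{F}^\star)_\star = (((((\mathbb{F}^\ast)^\bullet)_+)^+)_\bullet)_\ast$. Since $\mathbb{F}^\ast$ is a perfect m-algebra (its Boolean reduct is $\mathcal{P}(W)$), $(\mathbb{F}^\ast)^\bullet$ is a perfect supported heterogeneous m-algebra: both its Boolean components are powerset algebras, and by construction its operations are completely normal. Hence Proposition~\ref{prop:dduality multi-type}(3) applies to $\mathbb{H}:=(\mathbb{F}^\ast)^\bullet$ and yields $(((\mathbb{F}^\ast)^\bullet)_+)^+\cong(\mathbb{F}^\ast)^\bullet$. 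Applying the functors $(\cdot)_\bullet$ and then $(\cdot)_\ast$, and invoking $((\mathbb{F}^\ast)^\bullet)_\bullet\cong\mathbb{F}^\ast$ (Proposition~\ref{prop:alg characterization of single into multi}) together with $(\mathbb{F}^\ast)_\ast\cong\mathbb{F}$ (Proposition~\ref{prop:dduality single type}), we get
\[(\mathbb{F}^\star)_\star\;\cong\;(((\mathbb{F}^\ast)^\bullet)_\bullet)_\ast\;\cong\;(\mathbb{F}^\ast)_\ast\;\cong\;\mathbb{F}.\]

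\emph{Second claim.} First note that for every supported two-sorted n-frame $\mathbb{K}$ the frame $\mathbb{K}_\star=((\mathbb{K}^+)_\bullet)_\ast$ is well defined: $\mathbb{K}^+$ is a perfect heterogeneous m-algebra by Proposition~\ref{prop:dduality multi-type}(2), so $(\mathbb{K}^+)_\bullet$ is a perfect (in particular complete) m-algebra, $\mathbb{K}_\star$ is an n-frame, and its image $(\mathbb{K}_\star)^\star$ is a supported two-sorted n-frame by the lemma following the definition of $(\cdot)^\star$. The right-to-left direction is now immediate: if $\mathbb{K}\cong(\mathbb{K}_\star)^\star$, take $\mathbb{F}:=\mathbb{K}_\star$. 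For the left-to-right direction, assume $\mathbb{K}\cong\mathbb{F}^\star$ for some n-frame $\mathbb{F}$. Applying the functor $(\cdot)_\star$ and the first claim gives $\mathbb{K}_\star\cong(\mathbb{F}^\star)_\star\cong\mathbb{F}$, and then applying $(\cdot)^\star$ gives $(\mathbb{K}_\star)^\star\cong\mathbb{F}^\star\cong\mathbb{K}$, as required. The c-frame case is verbatim.

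\emph{Main obstacle.} There is no conceptual obstacle left: the substance is entirely contained in Propositions~\ref{prop:dduality single type}--\ref{prop:alg characterization of single into multi}, and what remains is the bookkeeping of composing the six dualities. The two points that do require care are (i) checking, at each application of a duality functor, that the algebra or frame it is applied to satisfies the relevant perfectness/completeness/supportedness hypothesis — in particular that $(\cdot)^\bullet$ carries perfect m-algebras to perfect heterogeneous m-algebras, which is what makes Proposition~\ref{prop:dduality multi-type}(3) applicable in the first claim — and (ii) that the isomorphisms furnished by the cited propositions are genuinely transported by the composite functors $(\cdot)^\star$ and $(\cdot)_\star$, which is why one needs the functoriality of all six basic constructions.
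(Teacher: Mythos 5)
Your proof is correct and follows essentially the same route the paper intends: the paper gives no explicit proof of this proposition, but introduces it precisely as the transfer of Proposition~\ref{prop:alg characterization of single into multi} to frames along the discrete dualities of Propositions~\ref{prop:dduality single type} and~\ref{prop:dduality multi-type}, which is exactly the unfolding-and-composing argument you carry out. The only point worth adding for completeness is that the intermediate applications of $(\cdot)_\bullet$ also require the relevant heterogeneous algebras to be \emph{supported}, which is supplied by the lemmas stating that $(\cdot)^\bullet$ produces supported algebras and that complex algebras of supported two-sorted n-frames are supported.
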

\section{Embedding non-normal logics into two-sorted normal logics}
\label{sec:embedding}
The two-sorted frames and heterogeneous algebras discussed in the previous section serve as semantic environment for the multi-type  languages defined below.

\paragraph{Multi-type languages.} For a denumerable set $\mathsf{Prop}$ of atomic propositions, the languages $\mathcal{L}_{MT\abla}$ and $\mathcal{L}_{MT>}$ in  types $\mathsf{S}$ (sets) and $\mathsf{N}$ (neighbourhoods) over $\mathsf{Prop}$ are defined as follows:
{\small
\begin{center}
$\begin{array}{lll}
\mathsf{S} \ni A::= p  \mid \top \mid \bot \mid \neg A \mid A \land A \mid \xdianu \alpha\mid \xboxnuc\alpha &\quad\quad&\mathsf{S} \ni A::= p  \mid \top \mid \bot \mid \neg A \mid A \land A \mid  \alpha\mtra A
\\
\mathsf{N} \ni \alpha ::=  \dtop\mid \dbot \mid {\sim} \alpha \mid \alpha \dand \alpha \mid \dboxni A\mid \ddianni\alpha &\quad\quad&\mathsf{N} \ni \alpha ::=  \dtop\mid \dbot \mid {\sim} \alpha \mid \alpha \dand \alpha \mid \dboxni A\mid \drhdnni A.
\end{array}$
\end{center}
}
\paragraph{Algebraic semantics.} Interpretation of $\mathcal{L}_{MT\abla}$-formulas  (resp.~$\mathcal{L}_{MT>}$formulas) in heterogeneous m-algebras (resp.~c-algebras) under homomorphic assignments $h:\mathcal{L}_{MT\abla}\to \mathbb{H}$ (resp.~$h:\mathcal{L}_{MT>}\to \mathbb{H}$)  and validity of formulas in heterogeneous algebras ($\mathbb{H}\models\Theta$) are defined as usual. %The By a routine Lindenbaum-Tarski construction one can show that $\mathbf{L}_{\abla}$ (resp.~$\mathbf{L}_{>}$) is sound and complete w.r.t.~the class of m-algebras (resp.~c-algebras). 

\paragraph{Frames and models.} $\mathcal{L}_{MT\abla}$-{\em models} (resp.~$\mathcal{L}_{MT>}$-{\em models}) are pairs $\mathbb{N} = (\mathbb{K}, V)$ s.t.~$\mathbb{K}= (X,Y,R_{\ni}, R_{\not\ni}, R_{\nu}, R_{\nu^c})$ is a supported two-sorted n-frame (resp.~$\mathbb{K}= (X,Y,R_{\ni},R_{\not\ni}, T_f)$ is a two-sorted c-frame) and $V:\mathcal{L}_{MT}\to\mathbb{K}^+$ is a heterogeneous algebra homomorphism of the appropriate signature. Hence, truth of formulas at states in models is defined as $\mb{N},z \Vdash \Theta$ iff $z\in V(\Theta)$ for every $z\in X\cup Y$ and $\Theta\in \mathsf{S}\cup\mathsf{N}$, and unravelling this stipulation   for formulas with a modal operator as main connective, we get:
{\small{
\begin{itemize}
\item $\mb{N},x \Vdash \xdianu \alpha \quad \text{iff}\quad  \mb{N},y \Vdash  \alpha \text{ for some } y \text{ s.t. } xR_\nu y$;
\item $\mb{N},x \Vdash \xboxnuc \alpha \quad \text{iff}\quad  \mb{N},y \Vdash  \alpha \text{ for all } y \text{ s.t. } xR_{\nu^c} y$;
\item $\mb{N},y \Vdash \dboxni A \quad \text{iff}\quad  \mb{N},x \Vdash  A \text{ for all } x \text{ s.t. } yR_\ni x$;
\item $\mb{N},y \Vdash \ddianni A \quad \text{iff}\quad  \mb{N},x \Vdash  A \text{ for some } x \text{ s.t. } yR_{\not\ni} x$;
\item $\mb{N},y \Vdash \drhdnni A \quad \text{iff}\quad  \mb{N},x \not\Vdash  A \text{ for all } x \text{ s.t. } yR_{\not\ni} x$;
\item $\mb{N},x \Vdash \alpha\mtra A \quad \text{iff}\quad  \text{ for all } y\text{ and all } x', \text{ if } T_f(x, y, x') \text{ and } \mb{N},y \Vdash \alpha \text{ then } \mb{N},x' \Vdash  A$.
\end{itemize}
}}
%\[\mb{N},x \Vdash \xdianu \alpha \quad \text{iff}\quad  \mb{N},y \Vdash  \alpha \text{ for some } y \text{ s.t. } xR_\nu y  \quad\quad \mb{N},x \Vdash \xboxnuc \alpha \quad \text{iff}\quad  \mb{N},y \Vdash  \alpha \text{ for all } y \text{ s.t. } xR_{\nu^c} y\]
%\[\mb{N},y \Vdash \dboxni A \quad \text{iff}\quad  \mb{N},x \Vdash  A \text{ for all } x \text{ s.t. } yR_\ni x  \quad\quad  \mb{N},y \Vdash \ddianni A \quad \text{iff}\quad  \mb{N},x \Vdash  A \text{ for some } x \text{ s.t. } yR_{\not\ni} x \] 
%\[\mb{N},y \Vdash \drhdnni A \quad \text{iff}\quad  \mb{N},x \not\Vdash  A \text{ for all } x \text{ s.t. } yR_{\not\ni} x\] 
%\[\mb{N},x \Vdash \alpha\mtra A \quad \text{iff}\quad  \text{ for all } y\text{ and all } x', \text{ if } T_f(x, y, x') \text{ and } \mb{N},y \Vdash \alpha \text{ then } \mb{N},x' \Vdash  A.\]
Global satisfaction (notation: $\mathbb{N}\Vdash\Theta$) is defined relative to the domain of the appropriate type, and frame validity (notation: $\mathbb{K}\Vdash\Theta$) is defined as usual. Thus, by definition, $\mathbb{K}\Vdash\Theta$ iff $\mathbb{K}^+\models \Theta$, %from which the soundness of $\mathbf{L}_{MT\abla}$ (resp.~$\mathbf{L}_{MT>}$) w.r.t.~the corresponding class of frames  immediately follows from the algebraic soundness. Completeness  follows from algebraic completeness, by observing that (a) the canonical extension of any heterogeneous algebra refuting $\Theta$ will also refute $\Theta$;  (b) canonical extensions of heterogeneous m-algebras (resp.~c-algebras) are perfect heterogeneous m-algebras (resp.~c-algebras); (c) if 
and if $\mathbb{H}$ is a perfect heterogeneous algebra, then   $\mathbb{H}\models\Theta$ iff $\mathbb{H}_+\Vdash \Theta$. %This is enough to derive the frame completeness of $\mathbf{L}_{MT\abla}$ (resp.~$\mathbf{L}_{MT>}$) from its algebraic completeness.
\paragraph{Sahlqvist theory for multi-type normal logics.} This semantic environment supports a straightforward extension of Sahlqvist theory for multi-type normal logics, which includes the definition of inductive and analytic inductive formulas and inequalities in $\mathcal{L}_{MT\abla}$  and $\mathcal{L}_{MT>}$ (cf.~Section \ref{sec:analytic inductive ineq}), and a corresponding version of the algorithm ALBA \cite{CoPa:non-dist} for computing their first-order correspondents and analytic structural rules. 
\paragraph{Translation.} Sahlqvist theory and analytic calculi for the non-normal logics  $\mathbf{L}_\abla$ and $\mathbf{L}_>$ and their analytic extensions can be then obtained `via translation', i.e.~by recursively defining  translations  $\tau_1, \tau_2:\mathcal{L}_\abla \to \mathcal{L}_{MT\abla}$ and $(\cdot)^\tau:\mathcal{L}_>\to \mathcal{L}_{MT>}$   as follows: 
{\small
\begin{center}
\begin{tabular}{rcl c rcl c rcl}
%\mc{7}{c}{$\tau(\phi \fCenter \psi) = \tau_1 (\phi) \fCenter \tau_2 (\psi)$}  &\\
%\mc{7}{c} \ \\
$\tau_1(p)$              &$=$& $p$                                        && $\tau_2(p)$              &$=$& $p$      &&         $p^\tau$ &=& $p$         \\
%$\tau_1(\top)$ &=& $\top$ && $\tau_2(\top)$ &=& $\top$ &&$\top^\tau$ &=& $\top$\\
$\tau_1(\phi \xand \psi)$ &$=$& $\tau_1(\phi) \xand \tau_1(\psi)$ && $\tau_2(\phi \xand \psi)$ &$=$& $\tau_2(\phi) \xand \tau_2(\psi)$  &&  $(\phi \land \psi)^\tau$ &=& $\phi^\tau \land \psi^\tau$  \\
$\tau_1(\xneg \phi)$    &$=$& $\xneg \tau_2(\phi)$                 && $\tau_2(\xneg \phi)$    &$=$& $\xneg \tau_1(\phi)$    && $(\neg\phi)^\tau$ &=& $\neg \phi^\tau$                \\
$\tau_1(\abla \phi)$     &$=$& $\xdianu \dboxni \tau_1(\phi)$  && $\tau_2(\abla \phi)$     &$=$& $\xboxnuc \ddianni \tau_2(\phi)$ && $(\phi > \psi)^\tau$ &=& $(\dboxni \phi^\tau \wedge\drhdnni \phi^\tau)\mtra \psi^\tau$\\
\end{tabular}
\end{center}
 }
The following proposition is shown by a routine induction.
\begin{proposition}
If $\mathbb{F}$ is an n-frame (resp.~c-frame) and $\phi\vdash \psi$ is an $\mathcal{L}_{\abla}$-sequent  (resp.~$\phi$ is an $\mathcal{L}_{>}$-formula), then $\mathbb{F}\Vdash \phi\vdash \psi \quad\text{ iff }\quad \mathbb{F}^\star\Vdash \tau_1(\phi)\vdash \tau_2(\psi)$
(resp.~$\mathbb{F}\Vdash \phi\quad\text{ iff }\quad \mathbb{F}^\star\Vdash \phi^\tau$).
\end{proposition}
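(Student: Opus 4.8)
The plan is to reduce the statement to an algebraic commutation lemma and then transport it along the dualities set up above. First I would pass from frame validity to validity in complex and heterogeneous algebras. By the definitions recalled earlier, $\mathbb{F}\Vdash\phi\vdash\psi$ iff $\mathbb{F}^\ast\models\phi\leq\psi$ (resp.\ $\mathbb{F}\Vdash\phi$ iff $\mathbb{F}^\ast\models\phi$), and $\mathbb{F}^\star\Vdash\tau_1(\phi)\vdash\tau_2(\psi)$ iff $(\mathbb{F}^\star)^+\models\tau_1(\phi)\leq\tau_2(\psi)$ (and similarly in the c-case). Now $\mathbb{F}^\ast$ is a perfect, hence complete, m-algebra (resp.\ c-algebra), so $(\mathbb{F}^\ast)^\bullet$ is a perfect supported heterogeneous algebra — its Boolean components being powersets — and therefore Proposition~\ref{prop:dduality multi-type}(3) yields $(\mathbb{F}^\star)^+ = \big(((\mathbb{F}^\ast)^\bullet)_+\big)^+\cong(\mathbb{F}^\ast)^\bullet$. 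Hence it suffices to prove that $\mathbb{F}^\ast\models\phi\leq\psi$ iff $(\mathbb{F}^\ast)^\bullet\models\tau_1(\phi)\leq\tau_2(\psi)$ (resp.\ $\mathbb{F}^\ast\models\phi$ iff $(\mathbb{F}^\ast)^\bullet\models\phi^\tau$).

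The heart of the argument is the following commutation lemma, stated for an arbitrary complete m-algebra (resp.\ c-algebra) $\mathbb{C}$: for every assignment $h\colon\mathsf{Prop}\to\mathbb{C}$, writing $\ol h\colon\mathcal{L}_{\abla}\to\mathbb{C}$ for its homomorphic extension and $h^\bullet\colon\mathcal{L}_{MT\abla}\to\mathbb{C}^\bullet$ for the unique heterogeneous homomorphism extending $h$ — well defined because $\mathsf{Prop}$ consists of type-$\mathsf{S}$ generators and the (multi-type) formula algebras are absolutely free — one has $h^\bullet(\tau_1(\phi)) = \ol h(\phi) = h^\bullet(\tau_2(\phi))$ for every $\phi\in\mathcal{L}_{\abla}$ (resp.\ $h^\bullet(\phi^\tau) = \ol h(\phi)$ for every $\phi\in\mathcal{L}_{>}$). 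This is a routine simultaneous induction on $\phi$. The atomic case and the $\land$-case are immediate; for negation one uses $\tau_1(\neg\phi) = \neg\tau_2(\phi)$, $\tau_2(\neg\phi) = \neg\tau_1(\phi)$ (resp.\ $(\neg\phi)^\tau = \neg\phi^\tau$) together with the fact that $\neg$ is interpreted by Boolean complementation on both sides; and for the modal case $\abla\phi$ one invokes the identities $\xdianu^{\mathbb{C}^\bullet}\dboxni^{\mathbb{C}^\bullet}a = \abla^{\mathbb{C}}a = \xboxnuc^{\mathbb{C}^\bullet}\ddianni^{\mathbb{C}^\bullet}a$ recorded in the discussion preceding Proposition~\ref{prop:alg characterization of single into multi}, i.e.\ the supportedness of $\mathbb{C}^\bullet$. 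In the c-case, for $\phi>\psi$ one computes $\dboxni^{\mathbb{C}^\bullet}\ol h(\phi)\cap\drhdnni^{\mathbb{C}^\bullet}\ol h(\phi) = \{b\in\mathbb{A}\mid b\leq\ol h(\phi)\}\cap\{b\in\mathbb{A}\mid \ol h(\phi)\leq b\} = \{\ol h(\phi)\}$, whence $\big(\dboxni^{\mathbb{C}^\bullet}\ol h(\phi)\cap\drhdnni^{\mathbb{C}^\bullet}\ol h(\phi)\big)\mtra^{\mathbb{C}^\bullet}\ol h(\psi) = \ol h(\phi)>^{\mathbb{C}}\ol h(\psi) = \ol h(\phi>\psi)$ by the definition of $\mtra^{\mathbb{C}^\bullet}$.

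Finally I would conclude by instantiating the lemma at $\mathbb{C} = \mathbb{F}^\ast$. The assignment $h\mapsto h^\bullet$ is a bijection between $\mathsf{Prop}$-assignments into $\mathbb{F}^\ast$ — equivalently, homomorphic assignments $\mathcal{L}_{\abla}\to\mathbb{F}^\ast$ — and heterogeneous homomorphic assignments $\mathcal{L}_{MT\abla}\to(\mathbb{F}^\ast)^\bullet$, both being determined by $h$. Hence $(\mathbb{F}^\ast)^\bullet\models\tau_1(\phi)\leq\tau_2(\psi)$ says exactly that $h^\bullet(\tau_1(\phi))\leq h^\bullet(\tau_2(\psi))$ for all $h$, which by the lemma is $\ol h(\phi)\leq\ol h(\psi)$ for all $h$, i.e.\ $\mathbb{F}^\ast\models\phi\leq\psi$; combined with the first paragraph this gives the m-case, and the c-case is identical after replacing ``validity of an inequality'' by ``validity of a formula''. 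I do not expect a genuine obstacle: as the paper notes, the statement is a routine induction. The only points requiring mild care are the bijective correspondence of assignments (which rests on $\mathsf{Prop}$ lying in type $\mathsf{S}$ and on freeness of the formula algebras) and the bookkeeping that makes $\tau_1$ and $\tau_2$ collapse to one and the same algebraic value — precisely the place where the supportedness identities for $\mathbb{C}^\bullet$, and the computation $\dboxni^{\mathbb{C}^\bullet}a\cap\drhdnni^{\mathbb{C}^\bullet}a = \{a\}$, are used.
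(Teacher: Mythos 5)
Your proposal is correct. The paper offers no proof beyond the remark that the statement ``is shown by a routine induction,'' and your argument is a faithful elaboration of exactly that: the only content is the commutation lemma $h^\bullet(\tau_1(\phi)) = \ol{h}(\phi) = h^\bullet(\tau_2(\phi))$ (resp.\ $h^\bullet(\phi^\tau)=\ol{h}(\phi)$), proved by the simultaneous induction you describe, with the modal steps discharged precisely by the supportedness identities $\xdianu^{\mathbb{C}^\bullet}\dboxni^{\mathbb{C}^\bullet}a = \abla^{\mathbb{C}}a = \xboxnuc^{\mathbb{C}^\bullet}\ddianni^{\mathbb{C}^\bullet}a$ and the computation $\dboxni^{\mathbb{C}^\bullet}a \dand \drhdnni^{\mathbb{C}^\bullet}a = \{a\}$. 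Your choice to route the frame statement through $(\mathbb{F}^\ast)^\bullet\cong(\mathbb{F}^\star)^+$ rather than inducting directly on satisfaction at states is a matter of packaging, not of substance, and all the facts you invoke (perfection of $(\mathbb{F}^\ast)^\bullet$, the duality $(\mathbb{H}_+)^+\cong\mathbb{H}$, the bijection of assignments via freeness and the fact that $\mathsf{Prop}$ lives in sort $\mathsf{S}$) are available in the paper.
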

With this framework in place, we are in a position to (a) retrieve correspondence results in the setting of {\em non normal} logics, such as those collected in Theorem \ref{theor:correspondence-noAlba},
as instances of the general  Sahlqvist theory for multi-type {\em normal} logics, and (b) recognize whether the translation of a non normal  axiom  is analytic inductive, and compute its corresponding analytic structural rules (cf.~Section \ref{sec:ALBA runs}). 
{\small{
\begin{center}
\begin{tabular}{@{}r l c l cc}
&Axiom && Translation & Inductive & Analytic \\
N\, & $\abla \top\quad$ &&   $\top\leq \xboxnuc\ddianni \top$ & $\checkmark$ & $\checkmark$\\
P\, & $\neg \abla \bot\quad$ &&  $\top\leq \neg \xdianu \dboxni \bot$ & $\checkmark$ & $\checkmark$\\
C\, &$ \abla p \land \abla q \to \abla(p \land q)\quad$ &&  $\xdianu \dboxni  p \land\xdianu \dboxni q \leq \xboxnuc\ddianni (p \land q)$ & $\checkmark$ & $\checkmark$\\
T\, &$\abla p \to p \quad$ &&   $\xdianu \dboxni p\leq p $& $\checkmark$ & $\checkmark$\\
4\, & $\abla \abla  p \to \abla p\quad$ &&   $\xdianu \dboxni \xdianu \dboxni   p \leq\xboxnuc\ddianni  p$& $\checkmark$ & $\times$\\
4'\, & $\abla p \to \abla \abla p\quad$ &&  $\xdianu \dboxni  p \leq \xboxnuc\ddianni\xboxnuc\ddianni  p$& $\checkmark$ & $\times$\\
5\, & $\neg \abla \neg p \to \abla \neg \abla \neg p \quad$ &&  $\neg \xboxnuc\ddianni\neg  p \leq \xboxnuc\ddianni \neg\xdianu \dboxni \neg p$& $\checkmark$ & $\times$\\
B\, & $p \to \abla \neg \abla \neg p \quad$ &&   $  p \leq \xboxnuc\ddianni\neg \xdianu \dboxni \neg p$& $\checkmark$ & $\times$\\
D\, & $\abla p \to \neg \abla \neg p\quad$ &&   $\xdianu \dboxni   p \leq  \neg \xdianu \dboxni \neg p$& $\checkmark$ & $\checkmark$\\
%10\, & $ (p > q) \rightarrow (p \rightarrow q)\quad$ &&  $(\dboxni p \wedge\drhdnni p)\mtra q\leq p \rightarrow q$& $\times$ & $\times$\\
CS\, & $(p\wedge q) \to (p > q)\quad$ &&  $(p\wedge q) \leq ((\dboxni p \wedge\drhdnni p)\mtra q)$& $\checkmark$ & $\checkmark$\\
CEM\, & $(p > q) \vee (p> \neg q)\quad$ &&  $\top\leq ((\dboxni p \wedge\drhdnni p)\mtra q) \vee ((\dboxni p \wedge\drhdnni p)\mtra \neg q)$& $\checkmark$ & $\checkmark$\\
ID\, & $p >  p \quad$ &&$\top\leq (\dboxni p \wedge\drhdnni p)\mtra p$& $\checkmark$ & $\checkmark$\\
\end{tabular}
\end{center}
}}

\section{Proper display calculi}

In this section we introduce proper multi-type display calculi for $\mathbf{L}_\abla$ and $\mathbf{L}_>$ and their axiomatic extensions generated by the analytic axioms in the table above. 

\noindent \emph{Languages. } \ \ The language $\mcl{L}_{DMT\abla}$ of the  calculus D.MT$\abla$ for $\mathbf{L}_\abla$ is defined as follows:
{\small{
\begin{center}
\begin{tabular}{l}
$\mathsf{S}\left\{\begin{array}{l}
A::= p \mid \xtop \mid \xbot \mid \neg A \mid A \xand A \mid \xdianu \alpha \mid \xboxnuc \alpha \\
X ::= A\mid \XTOP \mid \XBOT \mid \XNEG X \mid X \XAND X \mid X \XOR X \mid \XDIANU \Gamma \mid \XBOXNUC \Gamma \mid \XDIAIN \Gamma \mid \XBOXNIN \Gamma \\
\end{array} \right.$
 \\
$\mathsf{N}\left\{\begin{array}{l}
\alpha ::= \dboxni A \mid \ddianni A \\
\Gamma ::= \alpha \mid \DTOP \mid \DBOT \mid \DNEG \Gamma \mid \Gamma \DAND \Gamma \mid \Gamma \DOR \Gamma \mid \DBOXNI X \mid \DDIANNI X \mid \DBOXUN X \mid \DDIAUNC X \\
\end{array} \right.$
 \\
\end{tabular}
\end{center}
}}
The language $\mcl{L}_{DMT>}$ of the calculus D.MT$>$ for $\mathbf{L}_>$ is defined as follows:

{\small{
\begin{center}
\begin{tabular}{l}
$\mathsf{S}\left\{\begin{array}{l}
A::= p \mid \xtop \mid \xbot \mid \neg A \mid A \xand A \mid \alpha \mtra A \\
X ::= A\mid \XTOP \mid \XBOT \mid \XNEG X \mid X \XAND X \mid X \XOR X \mid \XDIAIN \Gamma \mid \Gamma \MTRA X \mid \Gamma \MTAND X \mid \XRHDNIN \Gamma\\
\end{array} \right.$
 \\
$\mathsf{N}\left\{\begin{array}{l}
\alpha ::= \dboxni A \mid \drhdnni A \mid \alpha \dand \alpha \\
\Gamma ::= \alpha \mid \DTOP \mid \DBOT \mid \DNEG \Gamma \mid \Gamma \DAND \Gamma \mid \Gamma \DOR \Gamma \mid \DBOXNI X \mid \DRHDNNI X \mid X \MTBRA X\\
\end{array} \right.$
 \\
\end{tabular}
\end{center}
}}

\noindent\emph{Multi-type display calculi.}\ \ In what follows, we use $X, Y, W, Z$ as structural $\mathsf{S}$-variables, and $\Gamma, \Delta, \Sigma, \Pi$ as structural $\mathsf{N}$-variables. %\footnote{For the sake of conciseness, we adopt the convention that the {\em position} of the name of rules -- on the left or on the right of the inference line -- is relevant to correctly identify each given rule. For instance, the name of each logical rule is placed on the right or on the left of the inference line, depending on whether the given rule is a right- or a left-introduction rule. Some structural rules have a double inference line, meaning that the rule is an abbreviation of two rules (one to be read top-down and the other bottom-up). In this case, we use one and the same name for both rules.}

\noindent {\bf Propositional base.}\ \ The calculi D.MT$\abla$ and D.MT$>$ share the rules listed below.
\begin{itemize}
\item Identity and Cut:
\end{itemize}
\vspace{-0.5cm}
{\fns
\begin{center}
\begin{tabular}{ccc}
\AXC{\rule[0mm]{0mm}{0.21cm}}
\LL{\scs $Id_\mathsf{S}$}
\UI$p \fCenter p$
\DP
 & 
\AX $X \fCenter A$
\AX $A \fCenter Y\rule[0mm]{0mm}{0.25cm}$
\RL{\scs $Cut_\mathsf{S}$}
\BI $X \fCenter Y$
\DP
 & 
\AX$\Gamma \fCenter \alpha$
\AX$\alpha \fCenter \Delta\rule[0mm]{0mm}{0.25cm}$
\RL{\scs $Cut_\mathsf{N}$}
\BI$\Gamma \fCenter \Delta$
\DP
\end{tabular}
\end{center}
}
\begin{itemize}
\item Pure $\mathsf{S}$-type display rules:
\end{itemize}
\vspace{-0.5cm}
{\fns
\begin{center}
\begin{tabular}{rlrlrl}
\AXC{\ }
\LL{\scs $\bot$}
\UI$\xbot \fCenter \XBOT$
\DP
 & 
\AXC{\ }
\RL{\scs $\top$}
\UI$\XTOP \fCenter \xtop$
\DP

 & 

\AX $X \XAND Y \fCenter Z$
\LeftLabel{\scriptsize $res_\mathsf{S}$}
\doubleLine
\UI $Y \fCenter \XNEG X \XOR Z$
\DisplayProof
 & 
\AX $X \fCenter Y \XOR Z $
\RightLabel{\scriptsize $res_\mathsf{S}$}
\doubleLine
\UI$\XNEG Y \XAND X \fCenter Z$
\DisplayProof

 & 

\AX $\XNEG X \fCenter Y$
\LeftLabel{\scriptsize $gal_\mathsf{S}$}
\doubleLine
\UI$\XNEG Y \fCenter X$
\DisplayProof
 & 
\AX$X \fCenter \XNEG Y$
\RightLabel{\scriptsize $gal_\mathsf{S}$}
\doubleLine
\UI$Y \fCenter \XNEG X$
\DisplayProof
 \\
\end{tabular}
\end{center}
}
\begin{itemize}
\item Pure $\mathsf{N}$-type display rules:
\end{itemize}
\vspace{-0.5cm}
{\fns
\begin{center}
\begin{tabular}{rlrl}
\AX$\Gamma \DAND \Delta \fCenter \Sigma$
\LeftLabel{\scriptsize $res_\mathsf{N}$}
\doubleLine
\UI$\Delta \fCenter \DNEG \Gamma \DOR \Sigma$
\DisplayProof
 & 
\AX$\Gamma \fCenter \Delta \DOR \Sigma$
\RightLabel{\scriptsize $res_\mathsf{N}$}
\doubleLine
\UI$\DNEG \Delta \DAND \Gamma \fCenter \Sigma$
\DisplayProof

\ \ & \ \ 

\AX$\DNEG \Gamma \fCenter \Delta$
\LeftLabel{\scriptsize $gal_\mathsf{N}$}
\doubleLine
\UI$\DNEG \Delta \fCenter \Gamma$
\DP
 & 
\AX$\Gamma \fCenter \DNEG \Delta$
\RL{\scriptsize $gal_\mathsf{N}$}
\doubleLine
\UI$\Delta \fCenter \DNEG \Gamma$
\DP
\end{tabular}
\end{center}
}
\begin{itemize}
\item Pure-type structural rules (these include standard Weakening (W), Contraction (C), Commutativity (E) and Associativity (A) in each type which we omit to save space): %\footnote{In what follows, we use subscripts (indicating the type) to distinguish the rules for lattice operators in different type rules.}
\end{itemize}
\vspace{-0.5cm}
{\fns
\begin{center}
\begin{tabular}{crlclrl}
\AX $X \fCenter  Y $
\LL{\scs $cont_\mathsf{S}$}
\doubleLine
\UI $\XNEG Y \fCenter \XNEG X$
\DP
 & 
\AX$X \fCenter Y$
\LL{\scs $\XTOP$}
\doubleLine
\UI$X \XAND \XTOP \fCenter Y$
\DisplayProof
 & 
\AX$X \fCenter  Y $
\RL{\scs $\XBOT$}
\doubleLine
\UI$X \fCenter Y \XOR \XBOT$
\DisplayProof

\ \ &\ \ 

\AX$\Gamma \fCenter \Delta$
\LL{\scs $cont_\mathsf{N}$}
\doubleLine
\UI$\DNEG \Delta \fCenter \DNEG \Gamma$
\DP
 & 
\AX $\Gamma \fCenter \Delta$
\LL{\scs $\DTOP$}
\doubleLine
\UI $\Gamma \DAND \DTOP \fCenter \Delta$
\DisplayProof
 & 
\AX$\Gamma \fCenter  \Delta $
\RL{\scs $\DBOT$}
\doubleLine
\UI$\Gamma \fCenter \Delta \DOR \DBOT$
\DP
 \\
\end{tabular}
\end{center}
}
\begin{itemize}
\item Pure  $\mathsf{S}$-type logical rules:
\end{itemize}
\vspace{-0.5cm}
{\fns
\begin{center}
\begin{tabular}{rlrl}
\AX$A \XAND B \fCenter X$
\LL{\scs $\xand$}
\UI$A \xand B \fCenter X$
\DP
 & 
\AX$X \fCenter A$
\AX$Y \fCenter B$
\RL{\scs $\xand$}
\BI$X \XAND Y \fCenter A \xand B$
\DP
 & 
\AX$\XNEG A \fCenter X$
\LL{\scs $\xneg$}
\UI$\xneg A \fCenter X$
\DP
 & 
\AX$X \fCenter \XNEG A$
\RL{\scs $\xneg$}
\UI$X \fCenter \xneg A$
\DP
 \\
\end{tabular}
\end{center}
}

\noindent {\bf Monotonic modal logic.} \ \ D.MT$\abla$ also includes the rules listed below.
\begin{itemize}
\item Multi-type display rules:
\end{itemize}
\vspace{-0.5cm}
{\fns
\begin{center}
\begin{tabular}{ccccc}
\AX$\XDIANU \Gamma \fCenter X$
\doubleLine
\LL{\scs $\XDIANU\DBOXUN$}
\UI$\Gamma \fCenter \DBOXUN X$
\DP
 & 
\AX$\DDIAUNC X \fCenter \Gamma$
\doubleLine
\LL{\scs $\DDIAUNC\XBOXNUC$}
\UI$X \fCenter \XBOXNUC \Gamma$
\DP
 & 
\AX$\XDIAIN \Gamma \fCenter X$
\doubleLine
\LL{\scs $\XDIAIN\DBOXNI$}
\UI$\Gamma \fCenter \DBOXNI X$
\DP
 & 
\AX$\XDIAIN \Gamma \fCenter X$
\doubleLine
\LL{\scs $\XDIAIN\DBOXNI$}
\UI$\Gamma \fCenter \DBOXNI X$
\DP
 & 
\AX$\DDIANNI X \fCenter \Gamma$
\doubleLine
\LL{\scs $\DDIANNI\XBOXNIN$}
\UI$X \fCenter \XBOXNIN \Gamma$
\DP
 \\
\end{tabular}
\end{center}
}
\begin{itemize}
\item Logical rules for multi-type connectives:
\end{itemize}
\vspace{-0.5cm}
{\fns
\begin{center}
\begin{tabular}{rlrlrl}
\AX$\XDIANU \alpha \fCenter X$
\LL{\scs $\xdianu$}
\UI$\xdianu \alpha \fCenter X$
\DP
 & 
\AX$\Gamma \fCenter \alpha$
\RL{\scs $\xdianu$}
\UI$\XDIANU \Gamma \fCenter \xdianu \alpha$
\DP
 & 
\AX$\alpha \fCenter \Gamma$
\LL{\scs $\xboxnuc$}
\UI$\xboxnuc \alpha \fCenter \XBOXNUC \Gamma$
\DP
 & 
\AX$X \fCenter \XBOXNUC \alpha$
\RL{\scs $\xboxnuc$}
\UI$X \fCenter \xboxnuc \alpha$
\DP
 \\

 & & & \\

\AX$\DDIANNI A \fCenter \Gamma$
\LL{\scs $\ddianni$}
\UI$\ddianni A \fCenter \Gamma$
\DP
 & 
\AX$X \fCenter A$
\RL{\scs $\ddianni$}
\UI$\DDIANNI X \fCenter \ddianni A$
\DP
 & 
\AX$A \fCenter X$
\LL{\scs $\dboxni$}
\UI$\dboxni A \fCenter \DBOXNI X$
\DP
 & 
\AX$\Gamma \fCenter \DBOXNI A$
\RL{\scs $\dboxni$}
\UI$\Gamma \fCenter \dboxni A$
\DP
 \\
\end{tabular}
\end{center}
}

\noindent {\bf Conditional logic.} \ \ D.MT$>$ includes left and right logical rules for $\dboxni$, the display postulates $\XDIAIN\DBOXNI$ and the rules listed below.
\begin{itemize}
\item Multi-type display rules:
\end{itemize}
\vspace{-0.5cm}
{\fns
\begin{center}
\begin{tabular}{ccc}
\AX$X \fCenter \Gamma \MTRA Y$
\doubleLine
\LL{\scs $\MTAND\MTRA$}
\UI$\Gamma \MTAND X \fCenter Y$
\doubleLine
\DP
\ \ & \ \ 
\AX$\Gamma \fCenter X \MTBRA Y$
\doubleLine
\RL{\scs $\MTBRA\MTRA$}
\UI$X \fCenter \Gamma \MTRA Y$
\DP
\ \ & \ \ 
\AX$X \fCenter \XRHDNIN \Gamma$
\doubleLine
\RL{\scs $\XRHDNIN\DRHDNNI$}
\UI$\Gamma \fCenter \DRHDNNI X$
\DP
 \\
\end{tabular}
\end{center}
}
\begin{itemize}
\item Logical rules for multi-type connectives and pure $\mathsf{G}$-type logical rules:
\end{itemize}
\vspace{-0.5cm}
{\fns
\begin{center}
\begin{tabular}{rlrlrl}
\AX$\Gamma \fCenter \alpha$
\AX$A \fCenter X$
\LL{\scs $\mtra$}
\BI$\alpha \mtra A \fCenter \Gamma \MTRA X$
\DP
 & 
\AX$X \fCenter \alpha \MTRA A$
\RL{\scs $\mtra$}
\UI$X \fCenter \alpha \mtra A$
\DP
 & 
\AX$X \fCenter A$
\LL{\scs $\drhdnni$}
\UI$\drhdnni A \fCenter \DRHDNNI X$
\DP
 & 
\AX$\Gamma \fCenter \DRHDNNI A$
\RL{\scs $\drhdnni$}
\UI$\Gamma \fCenter \drhdnni A$
\DP
 & 
\AX$\alpha \DAND \beta \fCenter \Gamma$
\LL{\scs $\dand$}
\UI$\alpha \dand \beta \fCenter \Gamma$
\DP
 & 
\AX$\Gamma \fCenter \alpha$
\AX$\Delta \fCenter \beta$
\RL{\scs $\dand$}
\BI$\Gamma \DAND \Delta \fCenter \alpha \dand \beta$
\DP
\\
\end{tabular}
\end{center}
}

\noindent {\bf Axiomatic extensions.} \ \ Each rule is labelled with the name of its corresponding axiom. % that it makes derivable if added to D.MML or D.CL. 
\vspace{-0.5cm}
{\fns
\begin{center}
\begin{tabular}{ccc}
\AX$ \DDIANNI \XTOP \fCenter \Gamma$
\LL{\scs N}
\UI$ \XTOP \fCenter \XBOXNUC \Gamma$
\DP
 & 
\AX$\Delta \fCenter \DRHDNNI \XDIAIN \Gamma$
\AX$\XDIAIN \Gamma \fCenter X$
\LL{\scs ID}
\BI$\XTOP \fCenter (\Gamma\DAND \Delta)\MTRA X$
\DP
 & 
\AX$ \DDIANNI (\XDIAIN\Gamma\XAND\XDIAIN\Delta) \fCenter \Theta$
\LL{\scs C}
\UI$ \XDIANU\Gamma \XAND\XDIANU\Delta \fCenter \XBOXNUC \Theta $
\DP
 \\

 & & \\

\AX$\Gamma \fCenter \DBOXNI \XNEG\XDIAIN \Delta$
\LL{\scs D}
\UI$ \XDIANU \Delta \fCenter \XNEG\XDIANU \Gamma$
\DP
 & 
\AX$\Gamma \fCenter \DBOXNI\XBOT$
\LL{\scs P}
\UI$\XTOP \fCenter \XNEG\XDIANU \Gamma$
\DP
 & 
\AXC{$\Gamma \vdash \DBOXNI \XRHDNIN \Delta \quad\quad X \vdash \XRHDNIN \Delta \quad\quad Y \vdash Z$}
\LL{\scs CS}
\UIC{$X\XAND Y \vdash (\Gamma\DAND\Delta)\MTRA Z$}
\DP
 \\

 & & \\

\mc{3}{c}{
\AXC{$ \Pi\vdash \DRHDNNI \XDIAIN \Gamma  \quad  \Pi\vdash \DRHDNNI \XDIAIN \Theta  \quad  \Delta\vdash \DRHDNNI \XDIAIN \Gamma  \quad  \Delta\vdash \DRHDNNI \XDIAIN \Theta \quad Y\vdash X$}
\LL{\scs CEM}
\UIC{$ \XTOP\vdash ((\Gamma\DAND \Delta)\MTRA X)\XOR((\Theta\DAND \Pi)\MTRA \XNEG Y)$}
\DP
\ \ \ \ \ \ \ \ \ 
\AX$\Gamma \fCenter \DBOXNI X$
\LL{\scs T}
\UI$\XDIANU \Gamma \fCenter X$
\DP
}
\end{tabular}
\end{center}
}

\paragraph{Properties.} The calculi introduced above are proper (cf.~\cite{wansing2013displaying,greco2016unified}), and hence the  general theory of proper multi-type display calculi guarantees that they enjoy {\em cut elimination} and {\em subformula property} \cite{TrendsXIII}, and  are {\em sound} w.r.t.~their corresponding class of perfect heterogeneous algebras (or equivalently, two-sorted frames) \cite{greco2016unified}). In particular, key to the soundness argument for the axiomatic extensions is the observation that (multi-type) analytic inductive inequalities are canonical (i.e.~preserved under taking canonical extensions of heterogeneous algebras \cite{CoPa:non-dist}). Canonicity is also key to the proof of {\em conservativity} of the calculi w.r.t.~the original logics (this is a standard argument which is analogous to those in e.g.~\cite{greco2017multi,linearlogPdisplayed}). {\em Completeness} is argued by showing that the translations of each axiom is derivable in the corresponding calculus, and is sketched below.
{\fns 
\begin{itemize}
\item[N.] \ $ \abla \top \ \rightsquigarrow \ \xboxnuc\ddianni \top$ \ \ \ \ P. \ $\neg \abla \bot \ \rightsquigarrow\ \neg \xdianu \dboxni \bot$ \ \ \ \ T. \ $\abla A \to A \ \rightsquigarrow\ \xdianu \dboxni A \vdash A$
\end{itemize}
\vspace{-0.5cm}
{\fns
\begin{center}
\begin{tabular}{ccc}
\AX$\XTOP \fCenter \xtop$
\UI$\DDIANI \XTOP \fCenter \ddiani \xtop$
\LL{\scs N}
\UI$\XTOP \fCenter \XBOXNUC \ddiani \xtop$
%\UI$\XTOP \fCenter \xboxnuc \ddiani \xtop$
\DP
\ \ & \ \ 
\AX$\xbot \fCenter \XBOT$
\UI$\dboxni \xbot \fCenter \DBOXNI \XBOT$
\LL{\scs P}
\UI$\XTOP \fCenter \XNEG \dboxni \xbot$
%\UI$\XTOP \fCenter \xneg \dboxni \xbot$
\DP
\ \ & \ \ 
\AX$A \fCenter A$
\UI$\dboxni A \fCenter \DBOXNI A$
\LL{\scs T}
\UI$\XDIANU \dboxni A \fCenter A$
%\UI$\xdianu \dboxni A \fCenter A$
\DP
 \\
\end{tabular}
\end{center}
}
\begin{itemize}
\item[ID.] $A > A \ \rightsquigarrow\ (\dboxni A \wedge\drhdnni A) \mtra A$ \ \ \ \ \ \ \ CS.\ $(A \wedge B) \to (A > B) \ \rightsquigarrow\ (A \wedge B) \vdash (\dboxni A \dand \drhdnni A) \mtra B$
\end{itemize}
\vspace{-0.5cm}
{\fns
\begin{center}
\begin{tabular}{@{}cc}
\AX$A \fCenter A$
\UI$\drhdnni A \fCenter \DRHDNNI A$
\UI$A \fCenter \XRHDNIN \drhdnni A$
\UI$\dboxni A \fCenter \DBOXNI \XRHDNIN \drhdnni A$
\UI$\XDIAIN \dboxni A \fCenter \XRHDNIN \drhdnni A$
\UI$\drhdnni A \fCenter \DRHDNNI \XDIAIN \dboxni A$
\AX$A \fCenter A$
\UI$\dboxni A \fCenter \DBOXNI A$
\UI$\XDIAIN \dboxni A \fCenter A$
\LL{\scs ID}
\BI$\XTOP \fCenter (\DBOXNI A \DAND \DRHDNNI A) \MTRA A$
%\dashedLine
%\UI$\XTOP \fCenter (\dboxni A \dand \drhdnni A) \mtra A$
\DP

 & 

\AX$A \fCenter A$
\UI$\drhdnni A \fCenter \DRHDNNI A$
\UI$A \fCenter \XRHDNIN \drhdnni A$
\UI$\dboxni A \fCenter \DBOXNI \XRHDNIN \drhdnni A$
\AX$A \fCenter A$
\UI$\drhdnni A \fCenter \DRHDNNI A$
\UI$A \fCenter \XRHDNIN \drhdnni A$
\AX$B \fCenter B$
\LL{\scs CS}
\TI$A \XAND B \fCenter (\DBOXNI A \DAND \DRHDNNI A) \MTRA B$
%\dashedLine
%\UI$A \xand B \fCenter (\dboxni A \dand \drhdnni A) \mtra B$
\DP 
 \\
\end{tabular}
\end{center}
}

\begin{itemize}
\item[CEM.] $(A > B) \vee (A > \neg B) \ \rightsquigarrow\ (\dboxni A \dand \drhdnni A)\mtra B \vee (\dboxni A \dand \drhdnni A) \mtra \neg B$
\end{itemize}
{\fns
\begin{center}
\AX$\drhdnni A \fCenter \DRHDNNI \XDIAIN \dboxni A \ \ \ \drhdnni A \fCenter \DRHDNNI \XDIAIN \dboxni A\ \ \ \drhdnni A \fCenter \DRHDNNI \XDIAIN \dboxni A\ \ \ \drhdnni A \fCenter \DRHDNNI \XDIAIN \dboxni A$
\LL{\scs CEM}
\UI$\XTOP \fCenter (\dboxni A \DAND \drhdnni A) \MTRA B \XOR (\dboxni A \DAND \drhdnni A) \MTRA \XNEG B$
%\dashedLine
%\UI$\XTOP \fCenter (\dboxni A \dand \drhdnni A)\mtra B \vee (\dboxni A \dand \drhdnni A) \mtra \neg B$
\DP
\end{center}
}

\begin{itemize}
\item[C.] $ \abla A \land \abla B \to \abla(A \land B) \rightsquigarrow \xdianu \dboxni A \land\xdianu \dboxni B \vdash \xboxnuc\ddianni (A \land B)$ 
\ \ D. \ \mbox{$\abla A \to \neg \abla \neg A \rightsquigarrow \xdianu \dboxni A \vdash \neg \xdianu \dboxni \neg A$}
\end{itemize}
{\fns
\begin{center}
\begin{tabular}{cc}
%\AXC{\ \ \ \ \ \ \ \ \ \ \ \ \ \ $\vdots$\ \ \ \ \ \ \ \ \ \ \ \ \ \ \ \ \ \ \ \ \ \ \ \ \ \ \ \ \ \ \ \ \ \ \ \ \ \ \ \ \ \ \ \ \ \ \ \ \ $\vdots$\ \ \ $\vdots$ \ \ \ $\vdots$}
%\noLine

\AX$A \fCenter A$
\UI$\dboxni A \fCenter \dboxni A$
\UI$\XDIAIN \dboxni A \fCenter A$
\AX$B \fCenter B$
\UI$\dboxni B \fCenter \dboxni B$
\UI$\XDIAIN \dboxni B \fCenter B$
\BI$\XDIAIN \dboxni A \XAND \XDIAIN \dboxni B \fCenter A \xand B$
\UI$\DDIANNI (\XDIAIN \dboxni A \XAND \XDIAIN \dboxni B) \fCenter \ddianni (A \xand B)$
\LL{\scs C}
\UI$\XDIANU \dboxni A \XAND \XDIANU \dboxni B \fCenter \XBOXNUC \ddianni (A \xand B)$
%\dashedLine
%\UI$\xdianu \dboxni A \xand \xdianu \dboxni B \fCenter \xboxnuc \ddianni (A \xand B)$
\DP
 & 
\AX$A \fCenter A$
\UI$\dboxni A \fCenter \DBOXNI A$
\UI$\XDIAIN \dboxni A \fCenter A$
\UI$\xneg A \fCenter \XNEG \XDIAIN \dboxni A$
\UI$\dboxni \xneg A \fCenter \DBOXNI \XNEG \XDIAIN \dboxni A$
\LL{\scs D}
\UI$\XDIANU \dboxni A \fCenter \XNEG \XDIANU \dboxni \xneg A$
%\dashedLine
%\UI$\xdianu \dboxni A \fCenter \xneg \xdianu \dboxni \xneg A$
\DP

 \\
\end{tabular}
\end{center}
}

}

%%%

\appendix
%\newpage
\section{Analytic inductive inequalities}
\label{sec:analytic inductive ineq}
In the present section, we  specialize the definition of {\em analytic inductive inequalities} (cf.\ \cite{greco2016unified}) to the  multi-type languages $\mathcal{L}_{MT\abla}$ and $\mathcal{L}_{MT>}$ reported below.
{\small
\begin{center}
$\begin{array}{lll}
\mathsf{S} \ni A::= p  \mid \top \mid \bot \mid \neg A \mid A \land A \mid \xdianu \alpha\mid \xboxnuc\alpha &\quad\quad&\mathsf{S} \ni A::= p  \mid \top \mid \bot \mid \neg A \mid A \land A \mid  \alpha\mtra A
\\
\mathsf{N} \ni \alpha ::=  \dtop\mid \dbot \mid {\sim} \alpha \mid \alpha \dand \alpha \mid \dboxni A\mid \ddianni A &\quad\quad&\mathsf{N} \ni \alpha ::=  \dtop\mid \dbot \mid {\sim} \alpha \mid \alpha \dand \alpha \mid \dboxni A\mid \drhdnni A.
\end{array}$
\end{center}
}
An {\em order-type} over $n\in \mathbb{N}$  is an $n$-tuple $\epsilon\in \{1, \partial\}^n$. If $\epsilon$ is an order type, $\epsilon^\partial$ is its {\em opposite} order type; i.e.~$\epsilon^\partial(i) = 1$ iff $\epsilon(i)=\partial$ for every $1 \leq i \leq n$.
The connectives of the language above are grouped together  into the  families $\mathcal{F}: = \mathcal{F}_{\mathsf{S}}\cup \mathcal{F}_{\mathsf{N}}\cup \mathcal{F}_{\textrm{MT}}$ and $\mathcal{G}: = \mathcal{G}_{\mathsf{S}}\cup \mathcal{G}_{\mathsf{N}} \cup  \mathcal{G}_{\textrm{MT}}$, defined as follows:
\begin{center}
\begin{tabular}{lcl}
$\mathcal{F}_{\mathsf{S}}: = \{\xneg\}$&&$ \mathcal{G}_{\mathsf{S}} = \{\xneg\}$\\
$\mathcal{F}_{\mathsf{N}}: = \{\dneg\}$ && $\mathcal{G}_{\mathsf{N}}: = \{\dneg\}$\\
$\mathcal{F}_{\textrm{MT}}: = \{\xdianu, \ddianni \}$ && $\mathcal{G}_{\textrm{MT}}: = \{\dboxni, \xboxnuc, \mtra, \drhdnni\}$\\

\end{tabular}
\end{center}
For any $f\in \mathcal{F}$  (resp.\ $g\in \mathcal{G}$), we let $n_f\in \mathbb{N}$ (resp.~$n_g\in \mathbb{N}$) denote the arity of $f$ (resp.~$g$), and the order-type $\epsilon_f$ (resp.~$\epsilon_g$) on $n_f$ (resp.~$n_g$)  indicate whether the $i$th coordinate of $f$ (resp.\ $g$) is positive ($\epsilon_f(i) = 1$,  $\epsilon_g(i) = 1$) or  negative ($\epsilon_f(i) = \partial$,  $\epsilon_g(i) = \partial$). %The order-theoretic motivation for this partition is that the algebraic interpretations of $\mathcal{F}$-connectives (resp.\ $\mathcal{G}$-connectives), preserve finite joins (resp.\ meets) in each positive coordinate and reverse finite meets (resp.\ joins) in each negative coordinate.
%
				%(resp.\ $\mathcal{L}_\textrm{DLE}^*$),
				%on which the algorithm ALBA is guaranteed to succeed %equivalently transform into one (or the conjunction of more) pure quasi-inequalities in an expanded language. For more details,
				%We will need the following preliminary definitions:
				%	
				\begin{definition}[\textbf{Signed Generation Tree}]
					\label{def: signed gen tree}
					The \emph{positive} (resp.\ \emph{negative}) {\em generation tree} of any $\mathcal{L}_\textrm{MT}$-term $s$ is defined by labelling the root node of the generation tree of $s$ with the sign $+$ (resp.\ $-$), and then propagating the labelling on each remaining node as follows:
					For any node labelled with $\ell\in \mathcal{F}\cup \mathcal{G}$ of arity $n_\ell$, and for any $1\leq i\leq n_\ell$, assign the same (resp.\ the opposite) sign to its $i$th child node if $\epsilon_\ell(i) = 1$ (resp.\ if $\epsilon_\ell(i) = \partial$). Nodes in signed generation trees are \emph{positive} (resp.\ \emph{negative}) if are signed $+$ (resp.\ $-$).
				\end{definition}
				For any term $s(p_1,\ldots p_n)$, any order type $\epsilon$ over $n$, and any $1 \leq i \leq n$, an \emph{$\epsilon$-critical node} in a signed generation tree of $s$ is a leaf node $+p_i$ with $\epsilon(i) = 1$ or $-p_i$ with $\epsilon(i) = \partial$. An $\epsilon$-{\em critical branch} in the tree is a branch ending in an $\epsilon$-critical node. For any term $s(p_1,\ldots p_n)$ and any order type $\epsilon$ over $n$, we say that $+s$ (resp.\ $-s$) {\em agrees with} $\epsilon$, and write $\epsilon(+s)$ (resp.\ $\epsilon(-s)$), if every leaf in the signed generation tree of $+s$ (resp.\ $-s$) is $\epsilon$-critical.
				 We will also write $+s'\prec \ast s$ (resp.\ $-s'\prec \ast s$) to indicate that the subterm $s'$ inherits the positive (resp.\ negative) sign from the signed generation tree $\ast s$. Finally, we will write $\epsilon(s') \prec \ast s$ (resp.\ $\epsilon^\partial(s') \prec \ast s$) to indicate that the signed subtree $s'$, with the sign inherited from $\ast s$, agrees with $\epsilon$ (resp.\ with $\epsilon^\partial$).	
		\begin{definition}[\textbf{Good branch}]
					\label{def:good:branch}
					Nodes in signed generation trees will be called \emph{$\Delta$-adjoints}, \emph{syntactically left residual (SLR)}, \emph{syntactically right residual (SRR)}, and \emph{syntactically right adjoint (SRA)}, according to the specification given in Table \ref{Join:and:Meet:Friendly:Table}.
					A branch in a signed generation tree $\ast s$, with $\ast \in \{+, - \}$, is called a \emph{good branch} if it is the concatenation of two paths $P_1$ and $P_2$, one of which may possibly be of length $0$, such that $P_1$ is a path from the leaf consisting (apart from variable nodes) only of PIA-nodes %\footnote{For an expanded discussion on this definition, see \cite[Remark 3.24]{palmigiano2016sahlqvist} and \cite[Remark 3.3]{conradie2016algorithmic}.}, 
					and $P_2$ consists (apart from variable nodes) only of Skeleton-nodes.

\vspace{-0.3cm}

%\begin{center}
\begin{table}
\begin{tabular}{cc}

							\begin{tabular}{| c | c |}
								\hline
								Skeleton &PIA\\
								\hline
								$\Delta$-adjoints & SRA \\
								\begin{tabular}{ c c c c c c c}
									$+\ $ &\ $\xor\ $ &\ $\dor\ $ \\ %&\ $\xand\ $ &\ $\dand\ $ \\
									$-\ $ & $\xand$ &$\dand$ \\ %&$\xor$ &$\dor$\\
									%\hline
								\end{tabular}
								&
								\begin{tabular}{c c c c c c c c c  }
									$+\ $ & \ $\xand$\  &\ $\dand$\ &\ $\dboxni$ \ & \ $\xboxnuc$ \ & \ $\mtra$ \ & \ $\drhdnni$\ &\ $\rdneg$\ &\ $\dneg$\ \\
									$-\ $ & \  $\xor $\  &\ $\dor$\  &\ $\xdianu$ \ &\ $ \ddianni$\ & \ $\rdneg$ \ &\ $\dneg$\    \\
									%\hline
								\end{tabular}
								\\
								\hline
								SLR &SRR\\
								\begin{tabular}{c c c c c c c c c}
									$+\ $ & \ $\xand$\  &\ $\dand$\ &\ $\xdianu$ \ &\ $ \ddianni$\ & \ $\rdneg$ \ &\ $\dneg$\   \\
									$-\ $ & \ $\xor$\  &\ $\dor$\  &\ $\dboxni$ \ & \ $\xboxnuc$ \ & \ $\mtra$ \ & \ $\drhdnni$\ &\ $\rdneg$\ &\ $\dneg$\  \\
								\end{tabular}
								&\begin{tabular}{c c c}
									 $+\ $ &\ $\xor$\  &\ $\dor$\ \\
									$-\ $ &\ $\xand$ \ &\ $\dand$ \ \\
								\end{tabular}
								\\
								\hline

							\end{tabular}

 & 

\begin{tabular}{c}
		\begin{tikzpicture}[scale=0.4]
		\draw (-5,-1.5) -- (-3,1.5) node[above]{\Large$+$} ;
		\draw (-5,-1.5) -- (-1,-1.5) ;
		\draw (-3,1.5) -- (-1,-1.5);
		\draw (-6,0) node{Skeleton} ;
		\draw[dashed] (-3,1.5) -- (-4,-1.5);
		\draw[dashed] (-3,1.5) -- (-2,-1.5);
		\draw (-4,-1.5) --(-4.8,-3);
		\draw (-4.8,-3) --(-3.2,-3);
		\draw (-3.2,-3) --(-4,-1.5);
		\draw[dashed] (-4,-1.5) -- (-4,-3);
		\draw[fill] (-4,-3) circle[radius=.1] node[below]{$+p$};
		\draw
		(-2,-1.5) -- (-2.8,-3) -- (-1.2,-3) -- (-2,-1.5);
		\fill[pattern=north east lines]
		(-2,-1.5) -- (-2.8,-3) -- (-1.2,-3);
		\draw (-2,-3.5)node{$s_1$};
		\draw (-6,-2.25) node{PIA} ;
		\draw (0,1.8) node{$\leq$};
		\draw (5,-1.5) -- (3,1.5) node[above]{\Large$-$} ;
		\draw (5,-1.5) -- (1,-1.5) ;
		\draw (3,1.5) -- (1,-1.5);
		\draw (6,0) node{Skeleton} ;
		\draw[dashed] (3,1.5) -- (4,-1.5);
		\draw[dashed] (3,1.5) -- (2,-1.5);
		\draw (2,-1.5) --(2.8,-3);
		\draw (2.8,-3) --(1.2,-3);
		\draw (1.2,-3) --(2,-1.5);
		\draw[dashed] (2,-1.5) -- (2,-3);
		\draw[fill] (2,-3) circle[radius=.1] node[below]{$+p$};
		\draw
		(4,-1.5) -- (4.8,-3) -- (3.2,-3) -- (4, -1.5);
		\fill[pattern=north east lines]
		(4,-1.5) -- (4.8,-3) -- (3.2,-3) -- (4, -1.5);
		\draw (4,-3.5)node{$s_2$};
		\draw (6,-2.25) node{PIA} ;
		\end{tikzpicture}
		\\
\end{tabular}

 \\
\end{tabular}
\vspace{0.5em}	
\caption{Skeleton and PIA nodes.}\label{Join:and:Meet:Friendly:Table}
\end{table}

\end{definition}

\vspace{-1cm}	
				
				\begin{definition}[\textbf{Analytic inductive inequalities}]
	\label{def:analytic inductive ineq}
					For any order type $\epsilon$ and any irreflexive and transitive relation $<_\Omega$ on $p_1,\ldots p_n$, the signed generation tree $*s$ $(* \in \{-, + \})$ of an $\mathcal{L}_{MT}$ term $s(p_1,\ldots p_n)$ is \emph{analytic $(\Omega, \epsilon)$-inductive} if
					\begin{enumerate}
						\item  every branch of $*s$ is good (cf.\ Definition \ref{def:good:branch});
						\item for all $1 \leq i \leq n$, every SRR-node occurring in  any $\epsilon$-critical branch with leaf $p_i$ is of the form $ \circledast(s, \beta)$ or $ \circledast(\beta, s)$, where the critical branch goes through $\beta$ and %or any $h\in\{1,\ldots,m\}\setminus j$: %$\gamma \in \{\gamma_1,\ldots,\gamma_{j-1},\gamma_{j+1},\ldots,\alpha_m\}$
						\begin{enumerate}
							\item $\epsilon^\partial(s) \prec \ast s$ (cf.\ discussion before Definition \ref{def:good:branch}), and
							%\item $\epsilon^\partial(\ast \gamma)$, and
							%
							\item $p_k <_{\Omega} p_i$ for every $p_k$ occurring in $s$ and for every $1\leq k\leq n$.
						\end{enumerate}

					\end{enumerate}
					
					 An inequality $s \leq t$ is \emph{analytic $(\Omega, \epsilon)$-inductive} if the signed generation trees $+s$ and $-t$ are analytic $(\Omega, \epsilon)$-inductive. An inequality $s \leq t$ is \emph{analytic inductive} if is analytic $(\Omega, \epsilon)$-inductive for some $\Omega$ and $\epsilon$.
				\end{definition}
								
%In each setting in which they are defined, analytic inductive inequalities are a subclass of inductive inequalities (cf.~\cite{greco2016unified}). In their turn, inductive inequalities are {\em canonical} (that is, preserved under canonical extensions, as defined in each setting).

\section{Algorithmic proof of Theorem \ref{theor:correspondence-noAlba}}
\label{sec:ALBA runs}

In what follows, we show that the correspondence results collected in Theorem \ref{theor:correspondence-noAlba} can be retrieved as instances of a suitable multi-type version of algorithmic correspondence  for normal logics (cf.~\cite{CoGhPa14,CoPa:non-dist}), hinging on the usual order-theoretic properties of the algebraic interpretations of the logical connectives, while admitting nominal variables of two sorts. For the sake of enabling a swift translation into the language of m-frames and c-frames, we write nominals directly as singletons, and, abusing notation, we quantify over the elements defining these singletons. These computations also serve to prove that each analytic structural rule is sound on the heterogeneous perfect algebras validating its correspondent axiom. In the computations relative to each analytic axiom, the line marked with $(\star)$ marks the quasi-inequality that interprets the corresponding analytic rule. This computation does {\em not} prove the equivalence between the axiom and the rule, since the variables occurring in each starred quasi-inequality are restricted rather than arbitrary. However, the proof of soundness is completed by observing that all ALBA rules in the steps above the marked inequalities are (inverse) Ackermann and adjunction rules, and hence are sound also when arbitrary variables replace (co-)nominal variables. 

\vspace{-0.3cm}
%In what follows, the validity of quasi-inequalities marked with $(\star)$ witnesses the soundness of the corresponding analytic rules.

{\small{
\begin{flushleft}
\begin{tabular}{@{}cll | cll}
\mc{3}{l}{N.\ \, $\mathbb{F}\Vdash \nabla \top \ \rightsquigarrow\ \top\subseteq [\nu^c] \langle \not \ni \rangle \top$} \ & \mc{3}{l}{\ P. \ $\mathbb{F}\models \neg \nabla \bot\ \rightsquigarrow\ \top\subseteq \neg \langle \nu \rangle [\ni ] \bot$} \\
\hline
     & $\top \subseteq [\nu^c] \langle \not \ni \rangle \top $ & \ & \ & $\top \subseteq\neg \langle \nu \rangle [\ni ] \bot$  & \\
iff  & $\forall X \forall w [\langle \not \ni \rangle \top \subseteq \{X\}^c  \Rightarrow \{w\} \subseteq [\nu^c] \{X\}^c]$  & $(\star)$ first app. \ & \ iff & $ \forall X [ X \subseteq [\ni]\bot \Rightarrow T \subseteq \neg \langle \nu \rangle  X]$ & $(\star)$ first app. \\
iff  & $\forall X \forall w[X = W  \Rightarrow \{w\} \subseteq [\nu^c] \{X\}^c)$  &  ($\langle \ni \rangle \top =  \{W\}^c$) \ & \ iff  & $W \subseteq\neg \langle \nu \rangle [\ni ] \emptyset$  & \\
iff  & $\forall w[ \{w\} \subseteq [\nu^c] \{W\}^c]$  & \ & \ iff  & $W \subseteq\neg \langle \nu \rangle \{ \emptyset \}$  & $[\ni ] \emptyset = \{Z\subseteq W\mid Z\subseteq \emptyset\}$ \\
iff  & $\forall w[\{w\} \subseteq (R_{\nu^c}^{-1}[W])^c]$  & \ & \ iff  & $W \subseteq \{w \in W \mid  w R_\nu \emptyset\}^c$  & \\
iff  & $\forall w[ \{w\} \subseteq R_{\nu}^{-1}[W]]$  & \ & \ iff  & $\forall w[ \emptyset \not \in \nu(w)]$.  & \\
iff  & $\forall w[ W\in \nu (w) ]$  & & & & \\
\end{tabular}
\end{flushleft}
 }

{\small{
\begin{flushleft}
\begin{tabular}{c ll}
\mc{3}{l}{C.\ \, $\mathbb{F}\models \nabla p \land \nabla q \to \nabla ( p \land q)  \ \rightsquigarrow\ \langle \nu \rangle [\ni] p \land \langle \nu \rangle [\ni] q \subseteq [\nu^c] \langle \not \ni \rangle (p \land q)$} \\
\hline
&$\langle \nu \rangle [\ni] p \land \langle \nu \rangle [\ni] q \subseteq [\nu^c] \langle \not \ni \rangle (p \land q)$\\
iff & $\forall  Z_1 \forall  Z_2 \forall Z_3\forall p \forall q[ \{Z_1\}\subseteq [\ni] p \ \& \   \{Z_2\} \subseteq [\ni] q \ \&\  \langle \not \ni \rangle(p \land q)\subseteq  \{Z_3\}^c\Rightarrow \langle \nu \rangle  \{Z_1\} \land \langle \nu \rangle  \{Z_2\} \subseteq [\nu^c]\{Z_3\}^c]$ & first approx. \\
iff & $\forall  Z_1 \forall  Z_2 \forall Z_3\forall p \forall q[ \langle \in \rangle \{Z_1\}\subseteq p \ \& \ \langle \in \rangle   \{Z_2\} \subseteq  q \ \&\  \langle \not \ni \rangle(p \land q)\subseteq  \{Z_3\}^c \Rightarrow \langle \nu \rangle  \{Z_1\} \land \langle \nu \rangle  \{Z_2\} \subseteq [\nu^c] \{Z_3\}^c]$ & Residuation \\
iff & $\forall  Z_1 \forall  Z_2 \forall Z_3[   \langle \not \ni \rangle(\langle \in \rangle \{Z_1\} \land \langle \in \rangle   \{Z_2\} )\subseteq  \{Z_3\}^c\Rightarrow \langle \nu \rangle  \{Z_1\} \land \langle \nu \rangle  \{Z_2\} \subseteq [\nu^c] \{Z_3\}^c]$ &$(\star)$ Ackermann \\
iff & $\forall  Z_1 \forall  Z_2 \forall Z_3[   (\langle \in \rangle \{Z_1\} \land \langle \in \rangle   \{Z_2\} )\subseteq [\not \in] \{Z_3\}^c\Rightarrow \langle \nu \rangle  \{Z_1\} \land \langle \nu \rangle  \{Z_2\} \subseteq [\nu^c] \{Z_3\}^c]$ & Residuation \\
iff &$\forall  Z_1 \forall  Z_2 \forall Z_3[\forall x(xR_\in Z_1\ \& \ xR_\in Z_2\Rightarrow \lnot  xR_{\notin}Z_3)\Rightarrow\forall x(xR_\nu Z_1\ \&\ xR_\nu Z_2\Rightarrow \lnot xR_{\nu^c}Z_3)]$ & Standard translation \\
iff &$\forall  Z_1 \forall  Z_2 \forall Z_3[\forall x(x\in Z_1\ \& \ x\in Z_2\Rightarrow x\in Z_3)\Rightarrow\forall x(Z_1\in \nu(x)\ \& \ Z_2\in\nu(x)\Rightarrow Z_3\in\nu(x))]$ & Relations interpretation \\
iff &$\forall  Z_1 \forall  Z_2 \forall Z_3[Z_1\cap Z_2\subseteq Z_3\Rightarrow\forall x(Z_1\in \nu(x)\ \& \ Z_2\in\nu(x)\Rightarrow Z_3\in\nu(x))]$ &  \\
iff &$\forall  Z_1 \forall  Z_2 \forall x(Z_1\in \nu(x)\ \& \ Z_2\in\nu(x)\Rightarrow Z_1\cap Z_2\in\nu(x))]$. & Monotonicity  \\
%iff & $\forall  Z_1 \forall  Z_2 [ \langle \nu \rangle  \{Z_1\} \land \langle \nu \rangle  \{Z_2\} \subseteq [\nu^c]\langle \not \ni \rangle(\langle \in \rangle \{Z_1\} \land \langle \in \rangle   \{Z_2\} )]$ & Ackermann \\
%iff & $\forall  Z_1 \forall  Z_2 [ R_\nu^{-1} [Z_1]  \cap   R_\nu^{-1} [Z_2] \subseteq [\nu^c]\langle \not \ni \rangle(Z_1 \cap Z_2)]$ &  \\
%iff & $\forall  Z_1 \forall  Z_2 [ R_\nu^{-1} [Z_1]  \cap   R_\nu^{-1} [Z_2] \subseteq \{w \mid \forall X(X \not \in \nu(w) \Rightarrow \exists u( u \not \in X \ \& \ u \in Z_1\cap Z_2)\}$ &  \\
%iff  & $ \forall Z_1 \forall Z_2\forall x[ \{ x\}\subseteq  R_\nu^{-1} [Z_1]  \cap   R_\nu^{-1} [Z_2] \Rightarrow \exists Y(x R_\nu Y\ \&\ Y\subseteq Z_1\cap Z_2)]$  & Monotonicity  \\
%iff  & $\forall x \forall Z_1 \forall Z_2 [ Z_1 \in \nu(x) \ \& \ Z_2 \in \nu (x) \Rightarrow  Z_1\cap Z_2  \in \nu  (x)]$. & Monotonicity \\
\end{tabular}
\end{flushleft}
}}

{\small{
\begin{flushleft}
\begin{tabular}{c ll}
\mc{3}{l}{T.\ \, $\mathbb{F}\models \nabla p\to p\ \rightsquigarrow\ \langle \nu\rangle [\ni] p\subseteq p$} \\
\hline
&$\langle \nu\rangle [\ni] p\subseteq p$\\
%iff & $\forall  x\forall p [ p \subseteq \{x\}^c \Rightarrow \langle \nu\rangle\dboxni p \subseteq  \{x\}^c]$ & first approx. \\
iff & $\forall  x\forall Z\forall p [ p \subseteq \{x\}^c\ \&\ \{Z\}\subseteq\dboxni p \Rightarrow \langle \nu\rangle\{Z\} \subseteq  \{x\}^c]$ & first approx. \\
iff & $\forall  x\forall Z\forall p [ p \subseteq \{x\}^c\ \&\ \langle\in\rangle\{Z\}\subseteq p \Rightarrow \langle \nu\rangle\{Z\} \subseteq  \{x\}^c]$ & Adjunction \\
iff & $\forall  x\forall Z[\langle\in\rangle\{Z\}\subseteq \{x\}^c \Rightarrow \langle \nu\rangle\{Z\} \subseteq  \{x\}^c]$ & $(\star)$ Ackermann \\
iff & $\forall  Z  [\langle \nu\rangle \{Z\} \subseteq \langle \ni \rangle \{Z\} ]$ & inverse approx. \\
iff  & $\forall x\forall Z[x  R_\nu Z\Rightarrow xR_\ni Z]$ &  Standard translation\\
iff  & $\forall x\forall Z[Z\in \nu(x)\Rightarrow x \in Z]$. & Relation translation\\
% iff & $\forall  Z \forall x'\forall p [( \{Z\} \subseteq [\ni] p \ \&\ p\subseteq \{x'\}^c)\Rightarrow \langle \nu\rangle \{Z\} \subseteq  \{x'\}^c]$ & first approx. \\
%iff & $\forall  Z \forall x' [ \{Z\} \subseteq [\ni]  \{x'\}^c \Rightarrow \langle \nu\rangle \{Z\} \subseteq  \{x'\}^c]$ & $(\star)$ Ackermann \\
%iff & $\forall  Z \forall x' [\langle \ni \rangle \{Z\} \subseteq  \{x'\}^c \Rightarrow \langle \nu\rangle \{Z\} \subseteq  \{x'\}^c]$ & Adjunction \\
%iff & $\forall  Z  [\langle \nu\rangle \{Z\} \subseteq \langle \ni \rangle \{Z\} ]$ & Ackermann \\
%iff & $\forall x \forall  Z [ \{x\} \subseteq \langle \nu\rangle \{Z\} \Rightarrow \{x\}\subseteq  \langle \ni \rangle \{Z\} ]$ & Reverse Ackermann \\
%iff  & $\forall x\forall Z[x \in R_\nu^{-1}[Z]\Rightarrow x\in  R_{\ni}^{-1}[ Z]]$ & ($\langle S\rangle Y = S^{-1}[Y] $)\\
%iff  & $\forall x\forall Z[x  R_\nu Z\Rightarrow xR_\ni Z]$ & ($a\in S^{-1}[b]$ iff $aSb$)\\
%iff  & $\forall x\forall Z[Z\in \nu(x)\Rightarrow x \in Z].$ &  (def of $R_\nu$ and $R_\in$)\\
%iff  & $\forall x\forall Z[\Rightarrow x \in Z]$ & \\
\end{tabular}
\end{flushleft}
}}

{\small{
\begin{center}
\begin{tabular}{c ll}
\mc{3}{l}{4'.\ \, $\mathbb{F}\models \nabla p\to \nabla \nabla p\ \rightsquigarrow\ \langle \nu\rangle [\ni] p\subseteq [\nu^c] \langle \not \ni \rangle [\nu^c] \langle \not \ni \rangle  p$} \\
\hline
&$\langle \nu\rangle [\ni] p\subseteq [\nu^c] \langle \not \ni \rangle [\nu^c] \langle \not \ni \rangle  p$\\ 
iff & $\forall Z_1\forall x' \forall p [ \{Z_1\}  \subseteq [\ni] p \ \&\ [\nu^c] \langle \not \ni \rangle [\nu^c] \langle \not \ni \rangle p\subseteq \{x'\}^c)\Rightarrow \langle \nu \rangle  \{Z_1\}  \subseteq  \{x'\}^c]$ & first approx. \\
iff & $\forall  Z_1 \forall x'\forall p [\langle \in \rangle  \{Z_1\}  \subseteq  p \ \&\ [\nu^c] \langle \not \ni \rangle [\nu^c] \langle \not \ni \rangle p\subseteq  \{x'\}^c)\Rightarrow \langle \nu \rangle  \{Z_1\}  \subseteq  \{x'\}^c]$ & Residuation \\
iff & $\forall  Z_1 \forall x'[[\nu^c] \langle \not \ni \rangle [\nu^c] \langle \not \ni \rangle\langle \in \rangle \{Z_1\}  \subseteq  \{x'\}^c \Rightarrow \langle \nu \rangle  \{Z_1\}  \subseteq  \{x'\}^c]$ & Ackermann \\
iff & $\forall  Z_1[\langle \nu \rangle  \{Z_1\}   \subseteq[\nu^c] \langle \not \ni \rangle [\nu^c] \langle \not \ni \rangle\langle \in \rangle \{Z_1\} ]$ &  \\
iff & $\forall  Z_1 \forall x[ xR_\nu Z_1    \Rightarrow \forall Z_2 (x R_{\nu^c} Z_2 \Rightarrow \exists y (Z_2 R_{\not \ni} y \ \& \ \forall Z_3 (yR_{\nu^c} Z_3 \Rightarrow \exists w ( Z_3 R_{\not \ni} w \ \& \ wR_\in Z_1 ) )))]$ & Standard translation\\ 
iff & $\forall  Z_1 \forall x[ x \in \nu(Z)   \Rightarrow \forall Z_2 (Z_2 \not \in \nu(x) \Rightarrow \exists y (y \not \in Z_2 \ \& \ \forall Z_3 (Z_2 \not \in \nu(y) \Rightarrow \exists w( w \not \in Z_3 \ \& \ w \in Z_1 ))))]$ & Relations translation\\
iff & $\forall  Z_1 \forall x[ x \in \nu(Z)   \Rightarrow \forall Z_2 (Z_2 \not \in \nu(x) \Rightarrow \exists y (y \not \in Z_2 \ \& \ \forall Z_3 (Z_2 \not \in \nu(y) \Rightarrow Z_1\nsubseteq Z_3)))]$ & Relations translation\\
iff &$\forall  Z_1 \forall x[ x \in \nu(Z)   \Rightarrow(\forall Z_2(\forall y(\forall Z_3(Z_1\subseteq Z_3\Rightarrow Z_3\in\nu(y))\Rightarrow y\in Z_2)\Rightarrow Z_2\in\nu(x)))]$ & Contraposition\\
iff &$\forall  Z_1 \forall x[ x \in \nu(Z) \Rightarrow(\forall Z_2(\forall y(Z_1\in\nu(y))\Rightarrow y\in Z_2)\Rightarrow Z_2\in\nu(x)))]$ & Monotonicity\\
iff &$\forall  Z_1 \forall x[ x \in \nu(Z) \Rightarrow\{ y\mid Z_1\in\nu(y)\}\in\nu(x)]$. & Monotonicity
%iff & $ \forall x \forall Z[ Z \in \nu (x) \Rightarrow \{y\in W \mid Z \in \nu (y) \} \in \nu (x)\}]$ Monotonicity  & 
\end{tabular}
\end{center}
}}

{\small{ 
\begin{flushleft}
\begin{tabular}{c ll}
\mc{3}{l}{4.\ \, $\mathbb{F}\models \nabla \nabla  p\to \nabla p\ \rightsquigarrow\ \langle \nu\rangle [\ni] \langle \nu\rangle [\ni] p\subseteq[ \nu^c ] \langle \not \ni \rangle p$} \\
\hline
&$ \langle \nu\rangle [\ni]\langle \nu\rangle [\ni] p\subseteq[ \nu^c ] \langle \not \ni \rangle p$\\ 
%iff & $\forall  x \forall p[ \{x\} \subseteq \xdianu\dboxni\xdianu\dboxni p \Rightarrow  \{x\}\subseteq \xboxnuc\ddianni p]$ & first approx. \\
iff & $\forall  x\forall Z_1 \forall p[ \{x\} \subseteq \xdianu\dboxni\xdianu\dboxni p\ \&\ \ddianni p\subseteq\{Z_1\}^c \Rightarrow  \{x\}\subseteq \xboxnuc\{Z_1\}^c]$ & first approx. \\
iff & $\forall  x\forall Z_1 \forall p[ \{x\} \subseteq \xdianu\dboxni\xdianu\dboxni p\ \&\  p\subseteq [\notin]\{Z_1\}^c \Rightarrow  \{x\}\subseteq \xboxnuc\{Z_1\}^c]$ & Adjunction \\
iff & $\forall  x\forall Z_1[ \{x\} \subseteq \xdianu\dboxni\xdianu\dboxni  [\notin]\{Z_1\}^c \Rightarrow  \{x\}\subseteq \xboxnuc\{Z_1\}^c]$ & Ackermann \\
iff &$\forall  x\forall Z_1[(\exists Z_2(xR_\nu Z_2\ \&\ \forall y(Z_2 R_\ni y\Rightarrow\exists Z_3(y R_\nu Z_3\ \&\ \forall w(Z_3 R_\ni w\Rightarrow \lnot wR_{\not\in} Z_1)))))\Rightarrow \lnot x R_{\nu^c} Z_1]$ & Standard translation\\
iff &$\forall  x\forall Z_1[((\exists Z_2\in \nu(x))(\forall y\in Z_2)(\exists Z_3\in\nu(y))(\forall w\in Z_3)(w\in Z_1))\Rightarrow Z_1\in\nu(x)]$ & Relation translation\\
iff &$\forall  x\forall Z_1[((\exists Z_2\in \nu(x))(\forall y\in Z_2)(\exists Z_3\in\nu(y))(Z_3\subseteq Z_1))\Rightarrow Z_1\in\nu(x)]$ & \\
iff &$\forall  x\forall Z_1\forall Z_2[(Z_2\in \nu(x)\ \&\ (\forall y\in Z_2)(\exists Z_3\in\nu(y))(Z_3\subseteq Z_1))\Rightarrow  Z_1\in\nu(x)]$ & \\
iff &$\forall  x\forall Z_1\forall Z_2[(Z_2\in \nu(x)\ \&\ (\forall y\in Z_2)( Z_1\in\nu(y)))\Rightarrow  Z_1\in\nu(x)]$ & Monotonicity\\
\end{tabular}
\end{flushleft}
}}

{\small{
\begin{flushleft}
\begin{tabular}{c ll}
\mc{3}{l}{5.\ \, $\mathbb{F}\models \neg \nabla \neg p\to \nabla \neg  \nabla \neg p \ \rightsquigarrow\ \neg[\nu^c] \langle \not \ni \rangle \neg p \subseteq  [\nu^c] \langle \not \ni \rangle \neg \langle \nu \rangle  [ \ni ] \neg p$} \\
\hline
&$\neg[\nu^c] \langle \not \ni \rangle \neg p \subseteq [\nu^c] \langle \not \ni \rangle \neg \langle \nu \rangle  [ \ni ] \neg p$ &\\ 
%iff &$\forall x[[\nu^c] \langle \not \ni \rangle \neg \langle \nu \rangle  [ \ni ] \neg p\subseteq \{x\}^c\Rightarrow \neg[\nu^c] \langle \not \ni \rangle \neg p \subseteq\{x\}^c ]$ & first approx.\\ 
iff &$\forall x\forall Z_1[[\nu^c] \langle \not \ni \rangle \neg \langle \nu \rangle  [ \ni ] \neg p\subseteq \{x\}^c\ \&\ \langle\not\ni\rangle \lnot p\subseteq\{Z_1\}^c \Rightarrow \neg[\nu^c] \{Z\}^c \subseteq\{x\}^c ]$ & first approx.\\ 
iff &$\forall x\forall Z_1[[\nu^c] \langle \not \ni \rangle \neg \langle \nu \rangle  [ \ni ] \neg p\subseteq \{x\}^c\ \&\ \lnot[\not\in]  \{Z_1\}^c\subseteq p \Rightarrow \neg[\nu^c] \{Z\}^c \subseteq\{x\}^c ]$ & Residuation\\ 
iff &$\forall x\forall Z_1[[\nu^c] \langle \not \ni \rangle \neg \langle \nu \rangle  [ \ni ] \neg \lnot[\not\in]  \{Z_1\}^c\subseteq \{x\}^c\Rightarrow \neg[\nu^c] \{Z\}^c \subseteq\{x\}^c ]$ & Ackermann\\ 
iff &$\forall Z_1[\neg[\nu^c] \{Z_1\}^c \subseteq[\nu^c] \langle \not \ni \rangle \neg \langle \nu \rangle  [ \ni ] \neg \lnot[\not\in]  \{Z_1\}^c ]$ & \\ 
iff &$\forall Z_1\forall x[xR_{\nu^c}Z_1\Rightarrow\forall Z_2(xR_{\nu^c}Z_2\Rightarrow\exists y(Z_2 R_{\not\ni} y\ \&\ \forall Z_3(yR_{\nu}Z_3\Rightarrow\exists w(Z_3 R_{\ni}w\ \&\ wR_{\notin}Z_1))))]$ & Standard translation\\ 
iff &$\forall Z_1\forall x[Z_1\notin\nu(x)\Rightarrow(\forall Z_2\notin\nu(x))(\exists y\notin Z_2)(\forall Z_3\in\nu(y))(\exists w\in Z_3)(w\notin Z_1)]$ & Relation translation\\ 
iff &$\forall Z_1\forall x[Z_1\notin\nu(x)\Rightarrow(\forall Z_2\notin\nu(x))(\exists y\notin Z_2)(\forall Z_3\in\nu(y))(Z_3\nsubseteq Z_1)]$ & \\ 
iff &$\forall Z_1\forall x[Z_1\notin\nu(x)\Rightarrow\forall Z_2(((\forall y\notin Z_2)(\exists Z_3\in\nu(y))(Z_3\subseteq Z_1))\Rightarrow Z_2\in\nu(x))]$ & Contraposition\\ 
iff &$\forall Z_1\forall x[Z_1\notin\nu(x)\Rightarrow\forall Z_2((\forall y\notin Z_2) (Z_1\in\nu(y))\Rightarrow Z_2\in\nu(x))]$ & Monotonicity \\ 
iff &$\forall Z_1\forall x[Z_1\notin\nu(x)\Rightarrow\{y\mid Z_1\in\nu(y)\}^c\in\nu(x))]$ & Monotonicity \\ 
\end{tabular}
\end{flushleft}
}}

{\small{ 
\begin{flushleft}
\begin{tabular}{c ll}
\mc{3}{l}{B.\ \, $\mathbb{F}\models p \to \nabla \neg  \nabla \neg p\ \rightsquigarrow\  p \subseteq [\nu^c] \langle \not \ni \rangle \neg \langle \nu \rangle  [ \ni ] \neg p$} \\
\hline
&$p \subseteq [\nu^c] \langle \not \ni \rangle \neg \langle \nu \rangle  [ \ni ] \neg p$\\ 
iff & $\forall  x \forall p [  \{x\} \subseteq p \Rightarrow  \{x\} \subseteq [\nu^c] \langle \not \ni \rangle \neg \langle \nu \rangle  [ \ni ] \neg p]$ & first approx. \\
iff & $\forall  x [  \{x\} \subseteq [\nu^c] \langle \not \ni \rangle \neg \langle \nu \rangle  [ \ni ] \neg  \{x\}]$ & Ackermann\\
iff  & $\forall  x [  \{x\} \subseteq [\nu^c] \langle \not \ni \rangle  [ \nu ]  \langle \ni \rangle  \{x\}]$ & \\
iff  & $\forall  x [ \forall Z_1 (x R_{\nu^c} Y \Rightarrow \exists y (Y R_{\not \ni} x \ \& \ \forall Z_2 (y R_\nu Z_2 \Rightarrow Z_2 R_\ni x )))]$ & Standard translation\\
iff  & $\forall  x [ \forall Z_1 (Z_1 \not \in \nu(x) \Rightarrow \exists y (x \not \in Z_1 \ \& \ \forall Z_2 (Z_2 \in \nu(y) \Rightarrow x \in Z_2 )))]$ & Relations translation\\
iff  & $\forall  x [ \forall Z_1(\forall y(\forall Z_2(x\notin Z_2\Rightarrow Z_2\notin\nu(y))\Rightarrow y \in Z_1) \Rightarrow Z_1 \in\nu(x) )]$ & Contrapositive\\
iff  & $\forall  x [ \forall Z_1(\forall y(\{x\}^c\notin\nu(y_1))\Rightarrow y \in Z_1) \Rightarrow Z_1 \in\nu(x) )]$ & Monotonicity\\
iff  & $\forall  x [ \{ y \mid \{x\}^c\notin\nu(y)\}\in\nu(x) )]$ & Monotonicity\\
iff  & $\forall x \forall X[ x \in X \Rightarrow  \{ y\mid X^c \notin \nu(y) \} \in \nu(x)]$ & Monotonicity\\
\end{tabular}
\end{flushleft}
}}

{\small{
\begin{flushleft}
\begin{tabular}{c ll}
\mc{3}{l}{D.\ \, $\mathbb{F}\models \nabla p \to \neg  \nabla \neg p\ \rightsquigarrow\  \langle \nu \rangle  [ \ni ] p \subseteq \neg\langle \nu \rangle  [ \ni ] \neg p$} \\
\hline
&$\langle \nu \rangle   [ \ni ] p \subseteq\neg\langle \nu \rangle  [ \ni ] \neg p$ \\ 
iff & $ \forall Z \forall Z' [ \{Z \} \subseteq [\ni ] p \ \&\  Z' \subseteq \ [\ni] \neg p  \Rightarrow \langle \nu \rangle \{Z \}\subseteq\neg \langle \nu \rangle Z']$ & first approx. \\
iff & $ \forall Z \forall Z' [ \langle \in \rangle \{Z \} \subseteq  p \ \&\  \{Z'\} \subseteq \ [\ni] \neg p  \Rightarrow \langle \nu \rangle \{Z \}\subseteq\neg \langle \nu \rangle \{Z'\}]$ & Residuation \\
iff & $ \forall Z \forall Z' [   \{Z'\} \subseteq \ [\ni] \neg \langle \in \rangle \{Z \}  \Rightarrow \langle \nu \rangle \{Z \}\subseteq\neg \langle \nu \rangle \{Z'\}]$ & $(\star)$ Ackermann \\
iff & $\forall Z [ \langle \nu \rangle \{Z\} \subseteq \neg\langle \nu \rangle  [ \ni ] \neg \langle \in \rangle \{Z\}]$ & \\
iff & $\forall Z [ \langle \nu \rangle \{Z\} \subseteq [ \nu]  \langle \ni \rangle \langle \in \rangle \{Z\}]$ & \\
iff & $\forall Z\forall x [ xR_\nu Z \Rightarrow  \forall Y(xR_\nu Y\Rightarrow \exists w(Y R_\ni w\ \&\ w R_\in Z))]$ & Standard Translation\\
iff & $\forall Z\forall x [ Z\in \nu(x) \Rightarrow  \forall Y(Y\in\nu(x)\Rightarrow \exists w( w\in Y \ \&\ w \in Z))]$ & Relation translation\\
iff & $\forall Z\forall x [ Z\in \nu(x) \Rightarrow  \forall Y(Y\in\nu(x)\Rightarrow Y\nsubseteq Z^c)]$ & \\
iff & $\forall Z\forall x [ Z\in \nu(x) \Rightarrow  \forall Y(Y\subseteq Z^c\Rightarrow Y\notin\nu(x))]$ & Contrapositive\\
iff & $\forall Z\forall x \forall Y[ Z\in \nu(x) \Rightarrow  Z^c\notin\nu(x)]$ & Monotonicity\\
%iff & $\forall Z\forall x [ \{ w \mid w R_\nu Z\} \subseteq \{w \mid \exists Y ( w R_\nu Y \ \& \ \forall y( Y R_\ni y \Rightarrow y\in Z^c\}^c]$ & Standard Translation\\
%iff & $\forall Z [ \{ w \mid Z \in \nu(w)\} \subseteq \{w \mid \exists Y ( Y \in \nu(w) \ \& \ \forall y( y \in Y \Rightarrow y\in Z^c\}^c]$ & Relations interpretation\\
%iff  &$\forall w \forall Z[x\in \nu(w) \Rightarrow  Z^c \not \in \nu(w)]$ Monotonicity&
\end{tabular}
\end{flushleft}
}}

{\small{
\begin{flushleft}
	\begin{tabular}{c ll}
\mc{3}{l}{CS.\ \, $\mathbb{F}\models (p \land q) \to (p\succ q)\ \rightsquigarrow\ (p\land q) \subseteq ([\ni] p\land[\not\ni\rangle p){\rhd}  q$} \\
\hline
		&$(p\land q) \subseteq ([\ni] p\dand[\not\ni\rangle p){\rhd}  q $ &\\  
		iff & $\forall  x \forall  Z\forall x' \forall p \forall q[  \{x\} \subseteq p \land q \ \& \  \{Z\}  \subseteq [ \ni ] p \dand [\not \ni \rangle p \ \& \ q  \subseteq \{x'\}^c   \Rightarrow   \{x\} \subseteq  \{Z\}    {\rhd}    \{x'\}^c]$  & first approx. \\ 
		iff & $\forall  x\forall  Z  \forall x\forall p \forall q[  \{x\} \subseteq p \ \& \  \{x\} \subseteq q \ \& \  \{Z\}  \subseteq [ \ni ] p \ \& \  \{Z\}  \subseteq [\not \ni \rangle p \ \& \ q  \subseteq  \{x'\}^c   \Rightarrow   \{x\} \subseteq  \{Z\}    {\rhd}   \{x'\}^c]$ & Splitting rule \\
		iff & $\forall  x \forall  Z\forall x'\forall p \forall q[  \{x\} \subseteq p \ \& \  \{x\} \subseteq q \ \& \  \{Z\}  \subseteq [ \ni ] p \ \& \ p \subseteq [ \not  \in \rangle  \{Z\}   \ \& \ q  \subseteq  \{x'\}^c   \Rightarrow   \{x\} \subseteq  \{Z\}    {\rhd}  \{x'\}^c]$ & Residuation \\
		iff & $\forall  x \forall  Z  \forall x'  \forall q[  \{x\} \subseteq [ \not  \in \rangle  \{Z\}  \ \& \  \{x\} \subseteq q \ \& \  \{Z\}  \subseteq [ \ni ] [ \not  \in \rangle  \{Z\}    \ \& \ q  \subseteq  \{x'\}^c   \Rightarrow   \{x\} \subseteq  \{Z\}    {\rhd}   \{x'\}^c]$ & Ackermann \\
		iff & $\forall  x\forall  Z \forall x' [  \{x\} \subseteq [ \not  \in \rangle  \{Z\}   \ \& \  \{Z\}  \subseteq [ \ni ] [ \not  \in \rangle  \{Z\}    \ \& \  \{x\}  \subseteq  \{x'\}^c   \Rightarrow   \{x\} \subseteq  \{Z\}    {\rhd}  \{x'\}^c]$ & $(\star)$ Ackermann \\
		iff & $\forall  x \forall  Z  [  \{x\} \subseteq [ \not  \in \rangle  \{Z\}   \ \& \  \{Z\}  \subseteq [ \ni ] [ \not  \in \rangle  \{Z\}    \Rightarrow   \{x\} \subseteq  \{Z\}    {\rhd}    \{x\}]$ &  \\
		%iff & $\forall  x \forall  Z  [  \{x\} \subseteq (R_{\not \ni}[\{Z\}])^c   \ \& \  \{Z\}  \subseteq  (R^{-1}_\ni[R_{\not \ni}[\{Z\}]])^c    \Rightarrow   \{x\} \subseteq  (T^{(0)}_f[\{Z\}, \{x\}^c])^c]$ & Standard tranlation \\
		%iff & $\forall  x \forall  Z  [  \{x\} \subseteq \{w \mid \forall X(w R_{\not\in} X \Rightarrow X \not \in \{Z\})\}   \ \& \  \{Z\}  \subseteq \{X \mid \forall y(X R_\ni y\Rightarrow \forall Y (y R_{\not \in} Y \Rightarrow Y \not \in \{Z\}))\}    \Rightarrow   \{x\} \subseteq  \{w \mid \forall y( T_f(w,Z,y) \Rightarrow y \in \{x\}\}]$ & Standard translation \\
		iff & $\forall  x \forall  Z  [  \lnot x R_{\not\in} Z   \ \& \  \forall y(Z R_\ni y\Rightarrow \lnot y R_{\not \in} Z)   \Rightarrow   \forall y( T_f(x,Z,y) \Rightarrow y =x)]$ & Standard translation \\
		iff & $\forall  x \forall  Z  [   x\in Z   \ \& \  \forall y(y\in Z\Rightarrow  Z\in y )   \Rightarrow   \forall y( y\in f(x,Z) \Rightarrow y=x)]$ & Relation interpretation \\
		%iff & $\forall  x \forall  Z  [  \{x\} \subseteq \{w \mid \forall X(w \not\in X \Rightarrow X \not \in \{Z\})\}   \ \& \  \{Z\}  \subseteq \{X \mid \forall y(y \in X \Rightarrow \forall Y (y \not \in Y \Rightarrow Y \not \in \{Z\}))\}    \Rightarrow   \{x\} \subseteq  \{w \mid \forall y( y \in f(w,Z) \Rightarrow y \in \{x\}\}]$ & Relation translation \\
		%iff & $\forall  x \forall  Z  [  \{x\} \subseteq Z  \ \& \  \{Z\}  \subseteq  \{X \mid \forall x (x \in X \Rightarrow x \in Z\}    \Rightarrow   \{x\} \subseteq \{w \mid \forall y( y \in f(w,Z) \Rightarrow y \in \{x\}\}]$ & \\
		%iff & $\forall  x \forall  Z  [  \{x\} \subseteq Z  \Rightarrow   \{x\} \subseteq \{w \mid \forall y( y \in f(w,Z) \Rightarrow y \in \{x\}\}]$ & \\
		iff & $\forall  x \forall  Z  [   x\in Z \   \Rightarrow \   \forall y( y\in f(x,Z) \Rightarrow y=x)]$ & \\
		iff  & $\forall x\forall Z[x \in Z \Rightarrow f(x,Z) \subseteq \{x\}]$ & \\
	\end{tabular}
\end{flushleft}
}}

{\small{
\begin{flushleft}
	\begin{tabular}{c ll}
\mc{3}{l}{CEM.\ \, $\mathbb{F}\models (p \succ q) \lor (p\succ \neg q) \ \rightsquigarrow\ (([\ni] p\dand[\not\ni\rangle p){\rhd}  q)\lor (([\ni] p\dand[\not\ni\rangle p){\rhd}  \neg q)$} \\
\hline
		&$\top \subseteq (([\ni] p\dand[\not\ni\rangle p){\rhd}  q)\lor (([\ni] p\dand[\not\ni\rangle p){\rhd}  \neg q)$ &\\  
		iff & $\forall p \forall q\forall X \forall Y \forall x \forall y   (\{X\} \subseteq [\ni] p\dand[\not\ni\rangle p \ \& \ $ & \\
		& $\{Y\} \subseteq [\ni] p\dand[\not\ni\rangle p\ \& \ q \subseteq \{x\}^c \ \& \ \{y\} \subseteq q \Rightarrow \top \subseteq (\{X\}{\rhd}\{x\}^c)\lor (\{Y\}{\rhd}\neg \{y\}) $  & first approx. \\ 
		iff & $\forall p \forall q\forall X \forall Y \forall x \forall y   (\{X\} \subseteq [\ni] p \ \& \ \{X\} \subseteq [\not\ni\rangle p \ \& \ $ & \\
		& $\{Y\} \subseteq [\ni] p\ \& \ \{Y\} \subseteq [\not\ni\rangle p\ \& \ q \subseteq \{x\}^c \ \& \ \{y\} \subseteq q \Rightarrow \top \subseteq (\{X\}{\rhd}\{x\}^c)\lor (\{Y\}{\rhd}\neg \{y\}) $  & $(\star)$ Splitting \\ 
		iff & $\forall p \forall q\forall X \forall Y \forall x \forall y   (\{X\} \subseteq [\ni] p \ \& \ p \subseteq [\not\in\rangle \{X\} \ \& \ $ & \\
		& $\{Y\} \subseteq [\ni] p\ \& \ p \subseteq [\not\in\rangle \{Y\} \ \& \ q \subseteq \{x\}^c \ \& \ \{y\} \subseteq q \Rightarrow \top \subseteq (\{X\}{\rhd}\{x\}^c)\lor (\{Y\}{\rhd}\neg \{y\}) $  & Residuation \\ 
		iff & $\forall X \forall Y \forall x \forall y   (\{X\} \lor \{Y\} \subseteq [\ni] ([\not\in\rangle \{X\} \land [\not\in\rangle \{Y\})   \ \& \ $ & \\
		& $\{y\} \subseteq \{x\}^c \Rightarrow \top \subseteq (\{X\}{\rhd}\{x\}^c)\lor (\{Y\}{\rhd}\neg \{y\}) $  & Ackermann \\
		iff & $\forall X \forall Y \forall x    (\{X\} \lor \{Y\} \subseteq [\ni] ([\not\in\rangle \{X\} \land [\not\in\rangle \{Y\})   \Rightarrow \forall y( \{y\} \subseteq \{x\}^c \Rightarrow \top \subseteq (\{X\}{\rhd}\{x\}^c)\lor (\{Y\}{\rhd}\neg \{y\})) $  & Currying \\ 
		iff & $\forall X \forall Y \forall x    (\{X\} \lor \{Y\} \subseteq [\ni] ([\not\in\rangle \{X\} \land [\not\in\rangle \{Y\})   \Rightarrow \top \subseteq (\{X\}{\rhd}\{x\}^c)\lor (\{Y\}{\rhd}\neg \{x\}^c)) $  &  \\
		%iff & $\forall X \forall Y \forall x    (\{X\} \lor \{Y\} \subseteq (R^{-1}_\ni [((R_{\not \ni}[\{X\}])^c \cap (R_{\not \ni}[\{Y\}])^c)^c])^c  \Rightarrow \top \subseteq (T^{(0)}_f[\{X\},\{x\}])^c\cup (T^{(0)}_f[\{Y\},\{x\}^c])^c $  & Standard translation \\
		%iff & $\forall X \forall Y \forall x    (\{X\} \lor \{Y\} \subseteq \{X \mid \forall y(X R_\ni y \Rightarrow \forall Y(y R_{\not \in} Y \Rightarrow Y \not \in \{X\}) \ \& \ \forall Y(y R_{\not \in} Y \Rightarrow Y \not \in \{Y\})\}  \Rightarrow \top \subseteq (\{w \mid \forall y (T_f (w, X,y) \Rightarrow y \in \{x\}^c\})\cup (\{w \mid \forall y (T_f(w, Y,y) \Rightarrow y \in \{x\}\}) $  & Standard translation \\
		iff & $\forall X \forall Y \forall x[(\forall y(XR_\ni y\ \text{ or }\ YR_\ni y)\Rightarrow\lnot yR_{\not\in} X\ \&\ \lnot yR_{\not\in} Y)\ \Rightarrow \forall y(\lnot T_f(y,X,x)\ \text{ or }\ (\forall z( T_f(y,Y,z)\Rightarrow z=x)))]$  & Standard translation \\
		iff & $\forall X \forall Y \forall x[(\forall y(y\in X \text{ or }\ y\in Y)\Rightarrow y\in X\ \&\ y\in Y)\ \Rightarrow \forall y(x\notin f(y,X)\ \text{ or }\ (\forall z( z\in f(y,Y)\Rightarrow z=x)))]$  & Relation interpretation \\
		%iff & $\forall X \forall Y \forall x    (\{X\} \lor \{Y\} \subseteq \{X \mid \forall y(y \in X \Rightarrow \forall Y(y \not \in Y \Rightarrow Y \not \in \{X\}) \ \& \ \forall Y(y \not \in Y \Rightarrow Y \not \in \{Y\})\}  \Rightarrow \top \subseteq (\{w \mid \forall y (y \in f (w, X) \Rightarrow y \in \{x\}^c\})\cup (\{w \mid \forall y ( y \in f(w, Y) \Rightarrow y \in \{x\}\}) $  & Relation translation \\
		%iff & $\forall X \forall Y \forall x    (X=Y   \Rightarrow \top \subseteq (\{w \mid \forall y (y \in f(w, X) \Rightarrow y \in \{x\}^c\})\cup (\{w \mid \forall y (y \in f(w, Y) \Rightarrow y \in \{x\}\}) $  &  \\
		%iff & $ \forall w \forall X \forall x (f(w,X) \subseteq \{x\} \lor f(w,X) \subseteq \{x\}^c)$ & \\
		%iff & $ \forall w \forall X \forall Z (f(w,X) \subseteq Z \lor f(w,X) \subseteq Z^c)$ &
		iff & $\forall X \forall Y \forall x[(X\cup Y\subseteq  X\cap Y)\ \Rightarrow \forall y(x\notin f(y,X)\ \text{ or }\ (\forall z( z\in f(y,Y)\Rightarrow z=x)))]$  & \\
		iff & $\forall X \forall Y \forall x[X=Y \Rightarrow \forall y(x\notin f(y,X)\ \text{ or }\ (\forall z( z\in f(y,Y)\Rightarrow z=x)))]$  & \\	
		iff & $\forall X \forall x\forall y[(x\notin f(y,X)\ \text{ or }\ (\forall z( z\in f(y,X)\Rightarrow z=x)))]$  & \\
		iff & $\forall X \forall x\forall y[(x\in f(y,X)\ \Rightarrow\ f(y,X)=\{x\})]$  & \\
		iff & $\forall X \forall y[|f(y,X)|\leq 1]$  &		
\end{tabular}
\end{flushleft}
}}

{\small{
\begin{flushleft}
	\begin{tabular}{c ll}
\mc{3}{l}{ID.\ \, $\mathbb{F}\models p\succ p \ \rightsquigarrow\ ([\ni] p\dand[\not\ni \rangle p){\rhd} p$} \\
\hline
		&$\top\subseteq ([\ni] p\dand[\not\ni\rangle p){\rhd} p$ &\\  
		iff & $\forall  Z \forall Z'\forall x'\forall p [( \{Z\} \subseteq [\ni] p \ \&\ \{Z'\}\subseteq [\not\ni\rangle p \ \&\ p\subseteq  \{x'\}^c)\Rightarrow  \top \subseteq( \{Z\} \dand\{Z'\}){\rhd}   \{x'\}^c]$ & first approx. \\ 
		iff & $\forall  Z \forall Z'\forall x'\forall p [(\langle\in\rangle \{Z\} \subseteq p \ \&\ \{Z'\}\subseteq [\not\ni \rangle p \ \&\ p\subseteq \{x'\}^c)\Rightarrow  \top\subseteq( \{Z\} \dand\{Z'\}){\rhd}   \{x'\}^c]$ & Adjunction \\
		iff & $\forall  Z \forall Z'\forall  x' [(\{Z'\}\subseteq [\not\ni \rangle \langle\in\rangle \{Z\}  \ \&\ \langle\in\rangle \{Z\} \subseteq  \{x'\}^c)\Rightarrow  \top\subseteq( \{Z\} \dand\{Z'\}){\rhd}  \{x'\}^c$ & Ackermann \\
		iff & $\forall  Z \forall Z'[\{Z'\}\subseteq [\not\ni \rangle \langle\in\rangle \{Z\}  \ \Rightarrow \forall  x' [ \langle\in\rangle \{Z\} \subseteq  \{x'\}^c \Rightarrow  \top \subseteq( \{Z\} \dand\{Z'\}){\rhd}  \{x'\}^c]]$ & Currying \\
		iff & $\forall  Z\forall Z'[\{Z'\}\subseteq [\not\ni \rangle \langle\in\rangle \{Z\}  \ \Rightarrow  \top \subseteq( \{Z\} \dand\{Z'\}){\rhd}  \langle\in\rangle \{Z\} ]$ & $(\star)$ Ackermann \\
		%iff & $\forall x\forall  Z[  \{x\}\subseteq( \{Z\} \dand[\not\ni \rangle \langle\in\rangle \{Z\} ){\rhd}  \langle\in\rangle \{Z\} ]$ &  \\
		iff & $\forall x\forall  Z\forall Z'[ \forall w(Z'R_{\not\ni}w\Rightarrow \lnot wR_\in Z) \Rightarrow \forall y (T_f (x, Z, y ) \ \& \ Z=Z'   \Rightarrow y \in Z) ]$ & Standard Translation \\
		iff & $\forall x\forall  Z\forall Z'\forall y[ \forall w(Z'R_{\not\ni}w\Rightarrow \lnot wR_\in Z) \ \&\   (T_f (x, Z, y ) \ \& \ Z=Z'   \Rightarrow y \in Z) ]$ & \\
		%iff & $\forall x\forall  Z[  \{x\}\subseteq \{w \mid \forall y \forall X (T_f (w, X, y ) \ \& \ X \in \{Z\}\cap \{Y\mid \forall y (Y R_{\not \ni} y \Rightarrow y \not \in Z\} \Rightarrow y \in Z)\} ]$ & Standard Translation \\
		%iff  & $\forall x\forall Z[\{x\} \subseteq  (T^{(0)}_f[\{Z\}\cap (R^{-1}_{\not \ni}[R_\ni[\{Z\}]])^c, R_{\ni}[\{Z\}]])^c]$ & Standard translation\\
		iff & $\forall x\forall  Z\forall Z'\forall y[ \forall w(w\notin Z'\Rightarrow  w\notin Z) \ \&\   (y\in f (x, Z) \ \& \ Z=Z'   \Rightarrow y \in Z) ]$ & Relation interpretation\\
		iff & $\forall x\forall  Z\forall Z'\forall y[ Z\subseteq Z' \ \&\   (y\in f (x, Z) \ \& \ Z=Z'   \Rightarrow y \in Z) ]$ & \\
		%iff  & $\forall x\forall Z[\{x\} \subseteq  \{w \mid \forall y(y \in f(w, Z) \Rightarrow y \in Z)\}$ & \\
		iff & $\forall x\forall  Z\forall y[(y\in f (x, Z)\Rightarrow y \in Z) ]$ & \\
		iff  & $\forall x\forall Z[f(x,Z)\subseteq Z]$ & \\		
	\end{tabular}
\end{flushleft}
}}

\bibliography{ref}
\bibliographystyle{plain}

\end{document}